\author{}
\title{}
\date{}
\def\edge{\ar@{-}}
\def\plc{*{}+0}
\def\bigblacksquare{\vrule height 8pt depth 2pt width 10pt}
\def\mdblk{\save+<0ex,0ex>\drop{\bigblacksquare}\restore}
\newcommand{\gc}{ [\![}
\newcommand{\dc}{]\!]}
\newcommand{\ov}{\overline}
\newcommand{\rank}{\mathrm{rank}}
\newcommand{\mmpc}{\oc_{q}\left( \mathcal{M}_{m,p}(\bbC) \right)}
\newcommand{\pmmpc}{\oc\left( \mathcal{M}_{m,p}(\bbC) \right)}
\newcommand{\ia}{i,\alpha}
\newcommand{\oc}{\mathcal{O}}
\newcommand{\mc}{\mathcal{M}}
\newcommand{\mcw}{\mathcal{M}(w)}
\newcommand{\hc}{\mathcal{H}}
\newtheorem{theo}{Theorem}[section]
\newtheorem{prop}[theo]{Proposition}
\newtheorem{lem}[theo]{Lemma}
\newtheorem{cor}[theo]{Corollary}
\theoremstyle{definition}
\newtheorem{defi}[theo]{Definition}
\newtheorem{nota}[theo]{Notation}
\newtheorem{conv}[theo]{Convention}
\newtheorem{alg}[theo]{Algorithm}
\newtheorem{obs}[theo]{Observations}
\newtheorem{remark}[theo]{Remark}
\newtheorem{remarks}[theo]{Remarks}
\newtheorem{example}[theo]{Example}
\newtheorem{examples}[theo]{Examples}
\newtheorem{subsec}[theo]{}
\numberwithin{equation}{section}
\newcommand{\bbN}{\mathbb{N}} 
\newcommand{\bbZ}{\mathbb{Z}} 
\newcommand{\bbQ}{\mathbb{Q}} 
\newcommand{\bbR}{\mathbb{R}} 
\newcommand{\bbC}{\mathbb{C}} 
\newcommand{\Mmpc}{\mc_{m,p}(\bbC)}
\newcommand{\diag}{\mbox{diag}}
\newcommand{\lc}{\mathcal{L}}
\newcommand{\wop}{w_\circ^p}
\newcommand{\wom}{w_\circ^m}
\newcommand{\woN}{w_\circ^N}
\newcommand{\Ecirc}{E^\circ}
\newcommand{\MmpK}{\mc_{m,p}(K)}
\newcommand{\onem}{\gc 1,m \dc}
\newcommand{\onep}{\gc 1,p \dc}
\newcommand{\onemxp}{\onem\times\onep}
\newcommand{\onel}{\gc 1,l \dc}
\newcommand{\mmptnn}{\mc^{\geq 0}_{m,p}(\bbR)}
\newcommand{\xbar}{\overline{x}}
\newcommand{\Xbar}{\overline{X}}
\newcommand{\Ybar}{\overline{Y}}
\newcommand{\transp}{^{\mathrm{tr}}}
\newcommand{\tr}{t^{(r)}}
\newcommand{\trp}{t^{(r^+)}}
\newcommand{\tinv}{t^{-1}}
\renewcommand{\ia}{{i,\alpha}}
\newcommand{\ld}{{l,\delta}}
\newcommand{\ib}{{i,\beta}}
\newcommand{\jb}{{j,\beta}}
\newcommand{\ja}{{j,\alpha}}
\newcommand{\idl}{{i,\delta}}
\newcommand{\la}{{l,\alpha}}
\newcommand{\jd}{{j,\delta}}
\newcommand{\lb}{{l,\beta}}
\newcommand{\kg}{{k,\gamma}}
\newcommand{\ig}{{i,\gamma}}
\newcommand{\ka}{{k,\alpha}}
\newcommand{\kb}{{k,\beta}}
\newcommand{\jg}{{j,\gamma}}
\newcommand{\Gr}{\mathrm{Gr}}
\newcommand{\Grtnn}{\Gr^{\mathrm{tnn}}_{m,n}}
\newcommand{\GrmnR}{\Gr_{m,n}(\bbR)} 
\newcommand{\GrmnC}{\Gr_{m,n}(\bbC)} 
\newcommand{\pc}{\mathcal{P}}
\newcommand{\rc}{\mathcal{R}}
\newcommand{\onen}{\gc 1,n \dc}
\newcommand{\StnnM}{\mathcal{S}_{\mc}^{\mathrm{tnn}}}
\newcommand{\Xtil}{\widetilde{X}}
\newcommand{\Bhat}{\widehat{B}}
\newcommand{\Ghat}{\widehat{G}}
\newcommand{\Phat}{\widehat{P}}
\newcommand{\pchat}{\widehat{\pc}}
\def\bysame{\leavevmode\hbox to3em{\hrulefill}\thinspace}
\title{Totally nonnegative cells and matrix Poisson varieties}
\author{K.R. Goodearl\thanks{\,The research of the first named author was supported
by a grant from the National Science Foundation (USA).},~~S. Launois\thanks{\,The research of the second named author was supported by a Marie Curie European Reintegration Grant within the
$7^{\mbox{th}}$ European Community Framework Programme.}~~and
T.H. Lenagan}
\begin{document}

\maketitle

\abstract{\footnotesize We describe explicitly the admissible
families of minors for the totally nonnegative cells of real matrices, that is, the families
of minors that produce nonempty cells in the cell decompositions of spaces of totally nonnegative matrices introduced by A. Postnikov. In order to do this, we relate the totally nonnegative cells to torus orbits of symplectic leaves of the Poisson varieties of complex  matrices. In particular, we describe the minors that vanish on a torus orbit of symplectic leaves, we prove that such families of minors are exactly the admissible families, and we show that the nonempty totally nonnegative cells are the intersections of the torus orbits of symplectic leaves with the spaces of totally nonnegative matrices.}

\section*{Introduction.}

In this paper, we investigate two related decompositions of matrix spaces. The first concerns the space $\mmptnn$ of $m\times p$
totally nonnegative real matrices. (Recall that a matrix
$M\in\mc_{m,p}(\bbR)$ is totally nonnegative if every minor of $M$ is
nonnegative.) Postnikov gives a cell decomposition of $\mmptnn$ in \cite{postnikov}.
The second space is the affine matrix variety $\Mmpc$, endowed
with its standard Poisson structure. Here the relevant decomposition is that into orbits of symplectic leaves under a standard torus action, as investigated in \cite{bgy}. Both decompositions are determined by sets of minors (via equations and inequations), and they are known to be parametrised by the same combinatorial objects. We determine the precise sets of minors defining nonempty totally nonnegative cells (respectively, torus orbits of symplectic leaves), we show that these sets of minors coincide, and we use this to prove that the nonempty cells in $\mmptnn$ are precisely the intersections of $\mmptnn$ with the torus orbits of symplectic leaves in $\Mmpc$. More detail follows.

In the unfinished paper \cite{postnikov}, first posted on the arxiv in 2006,
Postnikov investigates the totally nonnegative (parts of) Grassmannians. He gives
stratifications of the totally nonnegative Grassmannians via cells, and provides
parametrisations of these cells via combinatorial objects
that he calls \emph{$Le$-diagrams}. He also describes an algorithm which has as
output a list of the minors that vanish on the cell corresponding to a given $Le$-diagram. These results easily translate, via dehomogenisation, to corresponding statements
about spaces of totally nonnegative matrices. The cell in $\mmptnn$ corresponding to a collection $\mathcal{F}$ of minors consists of those matrices $M$ for which the minors vanishing on $M$ are precisely those in $\mathcal{F}$. Many such cells are empty, leaving the problem of which collections of minors define nonempty cells in $\mmptnn$. We solve this problem in Theorem \ref{TheoDescription}. Further, we develop an alternative algorithm for
calculating the minors that vanish on a totally nonnegative cell. This is a version of the Restoration Algorithm originally constructed by the
second named author in \cite{lauPEMS} in order to study quantum matrices. 

Postnikov's $Le$-diagrams had already appeared in the literature in Cauchon's
study of the torus invariant prime ideals in quantum matrices, see \cite{c2}, and were denoted \emph{Cauchon diagrams} in subsequent work in that area.
For that reason, in this paper we use the term ``Cauchon diagram'' instead of
``$Le$-diagram''. 

The method we employ to describe the sets of minors that define nonempty
cells in $\mmptnn$ is indirect; it is based on the matrix Poisson affine space $\Mmpc$ and its coordinate ring, the Poisson algebra $\pmmpc$. There is a
natural action of the torus $\hc:=\left( \bbC^{\times} \right)^{m+p}$ on
$\Mmpc$ and a corresponding induced action on $\pmmpc$. In
\cite{bgy}, the stratification of $\Mmpc$ by $\hc$-orbits of symplectic leaves is studied. These orbits are parametrised by certain ``restricted
permutations'' $w$ from the symmetric group $S_{m+p}$, namely permutations that do not move any integer more than $m$ units to the right nor more than $p$ units to the left. 

One of the main results of \cite{bgy}, Theorem 4.2, describes the matrices
that belong to the torus orbit of symplectic leaves corresponding to a given restricted
permutation in terms of rank conditions on the matrices. Here,
our first main aim in the Poisson setting is to determine exactly
which minors vanish on the
(closure of) such a torus orbit of symplectic leaves. This is complementary to a recent result of Yakimov, who showed that the ideal of polynomial functions vanishing on such an orbit is generated by a set of minors \cite[Theorem 5.3]{Yak}.

Once this first main aim has been achieved, we study the link between totally nonnegative cells in $\mmptnn$ and torus orbits of symplectic leaves in $\Mmpc$. In particular, we introduce the notion of $\hc$-invariant Cauchon matrices that allows us to prove that a family of minors is admissible (that is, the corresponding totally nonnegative cell is nonempty) if and only if it is the list of all coordinate minors in $\pmmpc$ that belong to the defining ideal of the (closure of) some torus orbit of
symplectic leaves. This leads to our main Theorem \ref{TheoDescription}, which provides an explicit description of the sets of minors that determine nonempty cells in $\mmptnn$. To prove it, we trace vanishing properties of minors through the restoration algorithm and relate that information to $\hc$-invariant prime Poisson ideals of $\pmmpc$. Once this theorem is established, finally, we derive the correspondence between totally nonnegative cells and torus orbits of symplectic leaves: Postnikov's partition of $\mmptnn$ into nonempty cells coincides with the partition obtained by intersecting $\mmptnn$ with the partition of $\Mmpc$ into $\hc$-orbits of symplectic leaves. Both partitions are thus parametrised by the restricted permutations mentioned above.

Note that the parametrisations of the nonempty totally nonnegative cells by
two seemingly distinct combinatorial objects is illusory -- there is a natural way to
construct a restricted permutation from a Cauchon diagram via the notion of
pipe dreams (see \cite[Section 19]{postnikov}). At the end of this paper, we present an algorithm that, starting only from a Cauchon diagram, constructs 
an admissible family of minors. Of course, it would be interesting to 
know exactly which restricted permutation parametrises the admissible family obtained from a given Cauchon diagram via this algorithm. We
will return to this question in a subsequent paper.  

In \cite{GLL2}, we use ideas developed in the present article in order to prove that the quantum analogues of the admissible families of minors are exactly the sets of quantum minors contained in the $\hc$-prime ideals of the algebra $\mmpc$ of quantum matrices. When the quantum parameter $q$ is transcendental over $\mathbb{Q}$, these quantum minors generate the $\hc$-prime ideals of $\mmpc$, as proved by the second-named author \cite[Th\'eor\`eme 3.7.2]{lauJAlg}. A different approach to this result, applicable to many quantized coordinate algebras, is developed by Yakimov in \cite{Yak} (see \cite[Theorem 5.5]{Yak}).
\medskip

Throughout this paper, we use the following conventions: 
\begin{enumerate}
\item[$\bullet$] $\bbN$ denotes 
the set of positive integers, and
$\bbC^{\times}:=\bbC\setminus \{0\}$. 
\item[$\bullet$] If $I$ is any nonempty
finite subset of $\bbN$, then $|I|$ denotes its cardinality.
\item[$\bullet$] If $k$ is a positive integer, then $S_k$ denotes the group of
permutations of $\gc 1,k \dc:=\{1, \cdots, k\}$.
\item[$\bullet$] $m$ and $p$ denote two fixed positive integers with $m,p\geq2$.
\item[$\bullet$] $\mc_{m,p}(\bbR)$ denotes the space of $m \times p$
matrices with real entries, equipped with the Zariski topology.
\item[$\bullet$] If $K$ is a field and $I \subseteq \onem$ and $\Lambda
\subseteq \onep$ with $|I|=|\Lambda |=t \geq 1$, then we denote by
$[I | \Lambda ]$ the minor in $\oc(\mc_{m,p}(K))=
K[Y_{1,1}, \dots, Y_{m,p}]$ defined by: 
$$[I | \Lambda ]:=\det \left( Y_{\ia}
\right)_{(\ia) \in I \times \Lambda}.$$
It is convenient to also allow the empty minor: $[\emptyset|\emptyset] := 1 \in \oc(\MmpK)$. 
If $I= \{i_1,\dots,i_l\}$ and $\Lambda= \{\alpha_1,\dots,\alpha_l\}$, we write the
minor $[I|\Lambda]$ in the form
$$[i_1,\dots,i_l|\alpha_1,\dots,\alpha_l].$$
Whenever we write a minor in this form, we tacitly assume that the row and column
indices are listed in ascending order, that is, $i_1< \cdots< i_l$ and $\alpha_1<
\cdots< \alpha_l$.
\end{enumerate}

%%%%%%%%%%%%%%%%%%%%%%%%%%%%%%%%%%%%%%%%%%%%%%%%%%%%%%%%%%
\section{Totally nonnegative matrices and cells.}

%%%%%%%%%
\subsection{Totally nonnegative matrices.}

A matrix $M \in \mc_{m,p}(\bbR)$ is said to be \emph{totally
nonnegative} (\emph{tnn} for short) if all of 
its minors are nonnegative. The set of all $m\times p$ tnn matrices is denoted by
$\mmptnn$. This set is
a closed subspace of $\mc_{m,p}(\bbR)$. Further, a matrix is
said to be \emph{totally positive} if all its minors are
strictly positive; the set of all $m\times p$ totally positive
matrices is denoted by  
$\mc^{> 0}_{m,p}(\bbR)$. 
As a result of their importance in various domains of mathematics and
science, these classes of matrices have been extensively studied for more
than a century (see for instance \cite{Ando,Gasca}). 

%%%%%%%%%
\subsection{Cell decomposition.}

The space 
$\mmptnn$ admits a natural partition into
so-called \emph{totally nonnegative cells} in the following way. 
For any family $\mathcal{F}$
of minors (viewed as elements of the coordinate ring $\oc\left(
\mc_{m,p}(\bbR) \right)$), we define the totally nonnegative
cell $S_{\mathcal{F}}$ associated with $\mathcal{F}$ by: 
\begin{equation} \label{deftnncell}
S_{\mathcal{F}}:= \{
M \in \mmptnn \mid [I |J](M)=0 
~\mbox{if~and~only~if~}[I |J]\in\mathcal{F}
\}, 
\end{equation}
where $[I |J]$ runs through all minors in $\oc\left(
\mc_{m,p}(\bbR) \right)$.

Note that some cells are empty. For example, in $\mc^{\geq
0}_{2,2}(\bbR)$, the cell associated with $[2|2]$ is empty. Indeed, 
suppose 
that this cell were nonempty. Then there would exist a tnn matrix 
$\begin{bmatrix} a & b \\ c & 0 \end{bmatrix}$ 
such that $a,b,c>0$, but 
$-bc=\det \begin{bmatrix} a & b \\ c & 0 \end{bmatrix} > 0$, which is impossible.
\medskip

\begin{defi} 
A family of minors is \emph{admissible} if the corresponding 
totally nonnegative cell is nonempty.
\end{defi}

Hence, we have the following partition of the space $\mmptnn$: $$\mmptnn =\bigsqcup_{\mathcal{F} \mbox{ admissible}}
S_{\mathcal{F}},$$ which explains the importance of the tnn cells.
\medskip

The main aim of this paper is to give an explicit description of the families
of minors that are admissible.
\medskip

%%%%%%%%%
\subsection{An algorithmic description of the nonempty cells.}
\label{section:CauchonDiagrams}

In \cite{postnikov}, Postnikov considers the cell decomposition of the
\emph{totally nonnegative Grassmannian}. His results can be easily used to get
information about totally nonnegative matrices via dehomogenisation. Postnikov
parametrises the nonempty cells in the Grassmannian in the following way. First,
he shows that the nonempty cells are parametrised by combinatorial objects called
$Le$-diagrams. It is remarkable to note that
$Le$-diagrams have appeared simultaneously and independently in the study by
Cauchon of the so-called $\hc$-primes of the algebra of quantum matrices
\cite{c2}. The importance of $\hc$-primes in the algebra $\mmpc$ of generic
quantum matrices was pointed out by Letzter and the first named author who constructed a
stratification of the prime spectrum of this algebra, which is indexed by the
set of $\hc$-primes. In \cite{c2}, Cauchon has constructed a natural
one-to-one correspondence between the set of $\hc$-primes in quantum matrices
and so-called Cauchon diagrams which in turn are the same as the
$Le$-diagrams. Recall that an $m\times p$ \emph{Cauchon diagram} $C$ is simply
an $m\times p$ grid consisting of $mp$ squares in which certain squares are
coloured black. We require that the collection of black squares have the
following property. If a square is black, then either every square strictly to
its left is black or every square strictly above it is black. Denote the set
of $m\times p$ Cauchon diagrams by $\mathcal{C}_{m,p}$.

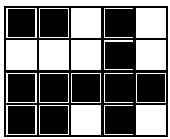
\begin{figure}[h]
\label{fig:CauchonDiagram}
\begin{center}
$$\xymatrixrowsep{0.01pc}\xymatrixcolsep{0.01pc}
\xymatrix{
\plc\edge[0,10]\edge[8,0] &&\plc\edge[8,0] &&\plc\edge[8,0]
&&\plc\edge[8,0] &&\plc\edge[8,0] &&\plc\edge[8,0] \\
 &\mdblk &&\mdblk &&&&\mdblk && &&\plc \\
\plc\edge[0,10] &&&&&&&&&&\plc \\
 &&&&&&&\mdblk && \\
\plc\edge[0,10] &&&&&&&&&&\plc \\
 &\mdblk &&\mdblk &&\mdblk &&\mdblk &&\mdblk \\
\plc\edge[0,10] &&&&&&&&&&\plc \\
 &\mdblk &&\mdblk &&&&\mdblk \\
\plc\edge[0,10] &&\plc &&\plc &&\plc &&\plc &&\plc 
}$$
\caption{An example of a $4\times 5$ Cauchon diagram}
\end{center}
\end{figure}

By convention, an ordered pair of integers $(\ia)$ \emph{belongs to} the Cauchon diagram $C$ if
the box $(\ia)$ in $C$ is black.

One easily obtains the following parametrisation of the nonempty cells in
$\mmptnn$ from Postnikov's work. 

\begin{theo} [{\rm\cite{postnikov}}] 
\label{theo:parametrisationnonemotycells}
The nonempty tnn cells in $\mmptnn$ are 
parametrised by $m \times p$ Cauchon diagrams.
\end{theo}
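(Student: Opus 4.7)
The plan is to deduce this from Postnikov's parametrisation in \cite{postnikov} of the nonempty cells of the totally nonnegative Grassmannian $\Gr^{\mathrm{tnn}}_{m,m+p}$, via the classical dehomogenisation embedding of $\mmptnn$ into $\Gr^{\mathrm{tnn}}_{m,m+p}$.

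First I would construct an explicit map $\phi : \mmptnn \to \Gr^{\mathrm{tnn}}_{m,m+p}$ sending a matrix $M$ to the row span of the augmented $m \times (m+p)$ matrix $[I_m \mid M^\#]$, where $M^\#$ is a sign-twist of $M$ (with entries of the form $(-1)^{i+\alpha-1} M_{i,\alpha}$, up to the usual convention) chosen so that every maximal minor of $[I_m \mid M^\#]$ equals, up to an explicit sign, a specific minor of $M$. A direct computation shows that for any $m$-subset $J \subseteq \gc 1,m+p \dc$ the Pl\"ucker coordinate $\Delta_J\bigl([I_m \mid M^\#]\bigr)$ coincides, up to this sign, with the minor of $M$ whose row set is $\gc 1,m \dc \setminus (J \cap \gc 1,m \dc)$ and whose column set is $\{\, j-m : j \in J,\ j>m \,\}$. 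In particular $\Delta_{\gc 1,m \dc}$ is identically $1$ on the image, $\phi(M)$ lies in $\Gr^{\mathrm{tnn}}_{m,m+p}$ whenever $M \in \mmptnn$, and $\phi$ is a homeomorphism onto the open subset of $\Gr^{\mathrm{tnn}}_{m,m+p}$ where $\Delta_{\gc 1,m \dc}$ does not vanish.

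Next I would translate the two cell decompositions. Under $\phi$ the vanishing of any given minor of $M$ is equivalent to the vanishing of one particular Pl\"ucker coordinate, and conversely every Pl\"ucker coordinate other than $\Delta_{\gc 1,m \dc}$ arises in this way. Hence $\phi$ induces a bijection between the nonempty tnn cells in $\mmptnn$ and exactly those nonempty tnn cells of $\Gr^{\mathrm{tnn}}_{m,m+p}$ on which $\Delta_{\gc 1,m \dc}$ does not vanish. By \cite{postnikov}, the nonempty tnn cells of $\Gr^{\mathrm{tnn}}_{m,m+p}$ are parametrised by $m \times p$ $Le$-diagrams, and one checks (by exhibiting, for each $Le$-diagram, a totally nonnegative matrix whose image under $\phi$ lies in the corresponding cell) that the subclass on which $\Delta_{\gc 1,m \dc}$ is nonzero consists of \emph{all} $m \times p$ $Le$-diagrams. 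Identifying $Le$-diagrams with Cauchon diagrams as recalled in Section \ref{section:CauchonDiagrams} yields the claimed parametrisation.

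The hard part is not conceptual but notational: fixing the sign twist $M \mapsto M^\#$, aligning the indexing of Pl\"ucker coordinates by $m$-subsets of $\gc 1,m+p \dc$ with the row/column indexing of minors of $M$, and matching all this with Postnikov's conventions for $Le$-diagrams so that the bijection on cells is carried through cleanly. Once this bookkeeping is done, the theorem reduces immediately to Postnikov's parametrisation for the totally nonnegative Grassmannian.
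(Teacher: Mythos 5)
Your route -- dehomogenise and quote Postnikov's parametrisation of the nonempty cells of $\Grtnn$ -- is legitimate, and it is in fact the route the paper has in mind when it states Theorem \ref{theo:parametrisationnonemotycells} with a citation rather than a proof ("One easily obtains\dots from Postnikov's work"); the relevant isomorphism is spelled out, with the correct conventions, in the alternate proof of Theorem \ref{tnnHsymp}. Note, however, that the paper deliberately does \emph{not} use this argument: it recovers the theorem by different, self-contained methods, namely the upper bound of Corollary \ref{bound} (each nonempty cell is a union of fibres of the map sending a tnn matrix to the Cauchon diagram of its image under the deleting derivations algorithm, via Corollary \ref{cor:independence}) together with the lower bound coming from Lemma \ref{Mwadmissible} and Corollary \ref{Mwcontain}, using $|\mathcal{S}|=|\mathcal{C}_{m,p}|$. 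That route costs more work but yields the explicit admissible families $\mc(w)$ (Theorem \ref{TheoDescription}), which your argument does not produce; your argument buys brevity at the price of importing Postnikov's Grassmannian results wholesale, including the bookkeeping that cells with $\Delta_{\gc 1,m\dc}\ne 0$ are exactly those labelled by $Le$-diagrams of full rectangular $m\times p$ shape.

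There is one concrete error in your set-up: the proposed sign twist $M^{\#}_{i,\alpha}=(-1)^{i+\alpha-1}M_{i,\alpha}$, with the row-set correspondence $I=\gc 1,m\dc\setminus(J\cap\gc 1,m\dc)$ and no row permutation, does not map $\mmptnn$ into $\Grtnn$. Already for $m=p=2$ one gets $\Delta_{\{2,4\}}\bigl([I_2\mid M^{\#}]\bigr)=-M_{1,2}<0$ for any totally positive $M$. Moreover no entrywise sign pattern alone can repair this: the $1\times1$ minors force the sign $(-1)^{m-i}$ on row $i$, and then the $2\times2$ Pl\"ucker coordinate using rows $m-1,m$ and the first two columns of $M^{\#}$ comes out as $-\det M[m-1,m\mid 1,2]$. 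One must also reverse the rows, as in the paper's map $\xi$, where $\Xtil=\bigl((-1)^{m-i}x_{m+1-i,j}\bigr)$; with that convention all maximal minors of $[I_m\mid\Xtil]$ equal minors of $X$ with no sign changes, but the row set matching $\Delta_J$ is the reversal $\{m+1-i\mid i\in\gc 1,m\dc\setminus(J\cap\gc 1,m\dc)\}$ rather than the set you wrote. This is a repairable slip, since you flag the conventions as the part to be fixed, but as written the three properties you claim for $\phi$ (image in $\Grtnn$, sign-free minor correspondence, and your stated row-set rule) are mutually inconsistent, so the fix is not optional.
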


We will not use this result. However, we will recover it by using different methods.
In particular, in Corollary \ref{bound}, we show that the number of nonempty tnn cells in
$\mmptnn$ is less than or equal to the number of $m
\times p$ Cauchon diagrams. Equality follows from Theorem \ref{TheoDescription}.

At this point it is worth recalling that Cauchon diagrams are also closely
related to restricted permutations. More precisely, set 
\[
\mathcal{S}= S^{[-p,m]}_{m+p}:=\{w \in
S_{m+p} \ | \ -p \leq w(i) -i \leq m \mbox{ for all }i=1,2, \dots, m+p\}.
\]
The set 
$\mathcal{S}$ is a sub-poset of the symmetric group $S_{m+p}$ endowed with the
Bruhat order. Namely, we have \cite[Proposition 1.3]{lau2}, \cite[Lemma 3.12]{bgy}: 
\[
\mathcal{S}=\left\{ w
\in S_{m+p} \ \biggm| \ w \leq 
\begin{bmatrix} 1 & 2 & \dots & p &
p{+}1 & p{+}2 & \dots & m{+}p \\ 
m{+}1 & m{+}2 & \dots & m{+}p & 1 & 2& \dots & m
\end{bmatrix} \right\}.
\] 
It was proved in \cite[Corollary 1.5]{lau2} that the
cardinality of $\mathcal{S}$ is equal to the number of $m \times p$ Cauchon
diagrams. Note that one can construct an explicit bijection between these
two sets by using the concept of pipe-dreams. (See \cite[Section 19]{postnikov})

Postnikov also constructs an algorithm that starts with a Cauchon diagram as
input and produces an admissible family of minors for a nonempty cell as
output. However, although Postnikov's algorithm produces a list of the minors
for such an admissible family, it does not give an explicit description of the
admissible families. As we do not use Postnikov's algorithm in this paper, we
refer the reader to \cite{postnikov} for details of the algorithm.

In the present paper, we give an explicit description of the admissible
families and also develop another algorithmic method to find them. Our
strategy to do so is to relate tnn cells with the $\hc$-orbits of symplectic
leaves of the Poisson algebra $O \left( \mc_{m,p} (\bbC)
\right)$ (viewed as the semiclassical limit of the algebra $\mmpc$ of quantum
matrices). In the next section we recall this Poisson structure and the
description of the $\hc$-orbits of symplectic leaves that has been obtained
by Brown, Yakimov and the first named author \cite{bgy}.

%%%%%%%%%%%%%%%%%%%%%%%%%%%%%%%%%%%%%%%%%%%%%%%%%%%%%%%
\section{Poisson $\hc$-prime ideals of $\pmmpc$.}
\label{sectionPoisson}

In this section, we investigate the standard Poisson structure of the
coordinate ring $\pmmpc$ coming from the commutators of $\mmpc$. Recall that a
\emph{Poisson algebra} (over $\bbC$) is a commutative
$\bbC$-algebra $A$ equipped with a Lie bracket $\{-,-\}$ which is a
derivation (for the associative multiplication) in each variable. The
derivations $\{a,-\}$ on $A$ are called \emph{Hamiltonian derivations}.
When $A$ is the algebra of complex-valued $C^{\infty}$ functions on a smooth
affine variety $V$, one can use Hamiltonian derivations in order to define
Hamiltonian paths in $V$. A \emph{Hamiltonian path in $V$} is a smooth path
$\gamma : [0,1] \rightarrow V$ such that there exists $f \in C^{\infty}(V)$
with $\frac{d\gamma}{dt}(t)=\xi_f(\gamma(t))$ for all $0 < t <1$, where
$\xi_f$ denotes the vector field associated to the Poisson derivation
$\{f,-\}$. It is easy to check that the relation ``connected by a piecewise
Hamiltonian path" is an equivalence relation. The equivalence classes of this
relation are called the \emph{symplectic leaves} of $V$; 
they form a partition of $V$.

A \emph{Poisson ideal} of $A$ is any ideal $I$ such that $\{A,I\}
\subseteq I$, and a \emph{Poisson prime} ideal is any prime ideal which
is also a Poisson ideal. The set of Poisson prime ideals in $A$ forms the
\emph{Poisson prime spectrum}, denoted $\mathrm{PSpec} (A)$, which is given the
relative Zariski topology inherited from $\mathrm{Spec} (A)$.

%%%%%%%%%
\subsection{The Poisson algebra $\pmmpc$.}

The coordinate ring of the variety $\mc_{m,p}(\bbC)$ will be
denoted by $\pmmpc$; it is a (commutative) polynomial algebra in $mp$
indeterminates $Y_{\ia}$ with $1 \leq i \leq m$ and $1 \leq \alpha \leq p$. 

The variety $\mc_{m,p}(\bbC)$ is a Poisson variety: 
one defines a Poisson structure on its coordinate ring $\pmmpc$ 
by the following data. 
\[
\{Y_{\ia} ,Y_{k,\gamma} \} = \begin{cases}
Y_{\ia}Y_{k,\gamma} &  \mbox{ if } i=k \mbox{ and } \alpha < \gamma \\
Y_{\ia} Y_{k,\gamma} &  \mbox{ if } i< k \mbox{ and } \alpha = \gamma \\
0 &   \mbox{ if } i < k \mbox{ and } \alpha > \gamma \\
2 Y_{i,\gamma} Y_{k,\alpha} &  
\mbox{ if } i< k  \mbox{, } \alpha < \gamma \,. 
\end{cases}
\]
This is the standard Poisson bracket on 
$\pmmpc$ and it arises as the 
semiclassical limit of the family of noncommutative algebras 
$\mmpc$, see \cite{BookBrownGoodearl}. 

Also, note that the Poisson bracket on $\pmmpc$ extends uniquely to a Poisson
bracket on $\mathcal{C}^{\infty}(\mc_{m,p}(\bbC))$, so that
$\mc_{m,p}(\bbC)$ can be viewed as a Poisson manifold. Hence
$\mc_{m,p}(\bbC)$ can be decomposed as the disjoint union of its
symplectic leaves.

%%%%%%%%%
\subsection{Torus action.}

The torus $\hc:=\left( \bbC^{\times} \right)^{m+p}$ acts on $\pmmpc$ by
Poisson automorphisms via:
\[
(a_1,\dots,a_m,b_1,\dots,b_p).Y_{\ia} = a_i b_\alpha Y_{\ia} \quad {\rm
for~all} \quad \: (\ia)\in \gc 1,m \dc \times \gc 1,p \dc.
\] 
The set of Poisson primes of
$\pmmpc$ that are invariant under this action of $\hc$ is denoted by 
$\hc$-$\mathrm{PSpec}(\pmmpc)$. Note that $\hc$ is
acting rationally on $\pmmpc$.

At the geometric level, this action of the algebraic torus $\hc$ on the
coordinate ring comes from the left action of $\hc$ on
$\mc_{m,p}(\bbC)$ by Poisson isomorphisms via:
$$(a_1,\dots,a_m,b_1,\dots,b_p).M := \diag(a_1,\dots,a_m) M \diag(b_1,\dots,b_p).$$
This action of $\hc$ on $\mc_{m,p}(\bbC)$ induces an action of
$\hc$ on the set $\mathrm{Sympl}(\mc_{m,p}(\bbC))$ of symplectic
leaves in $\mc_{m,p}(\bbC)$. As in \cite{bgy}, we view the
$\hc$-orbit of a symplectic leaf $\lc$ as the set-theoretic union
$\bigcup_{h\in\hc} h.\lc \subseteq \Mmpc$, rather than as the family $\{h.\lc \mid
h\in\hc\}$. We denote the set of such orbits by
$\hc$-$\mathrm{Sympl}(\mc_{m,p}(\bbC))$. These orbits were described
by Brown, Yakimov and the first named author who obtained the following results.

We use the notation of \cite{bgy} except that we replace $n$ by $p$. 
In particular, we set $N = m+p$. Let $\wom$, $\wop$ and $\woN$ denote the
respective longest elements in $S_m$, $S_p$ and $S_N$, respectively, so that 
$w_\circ^r(i) = r + 1 - i$ for $i = 1, . . . , r$. 
Recall from equation (3.24) and Lemma 3.12 of
\cite{bgy} that 
\begin{equation} \label{woNS}
\woN \mathcal{S}= S_N^{\geq (\wop,\wom)} := \{w\in S_N\mid w\geq (\wop,\wom)\},
\end{equation} 
where
\[
(\wop,\wom) := \begin{bmatrix} 1 & 2 & \dots & p & p+1 & p+2 & \dots & p+m \\ 
p & p-1 & \dots & 1 & p+m & p+m-1& \dots & p+1\end{bmatrix}.
\]

\begin{theo} 
{\rm \cite[Theorems 3.9, 3.13, 4.2]{bgy}}
\begin{enumerate}
\item There are only finitely many $\hc$-orbits of symplectic leaves in
$\mc_{m,p}(\bbC)$, and they are smooth irreducible locally
closed subvarieties. 
\item The set $\hc$-$\mathrm{Sympl}(\mc_{m,p}(\bbC))$ of orbits 
{\rm(}partially ordered by
inclusions of closures{\rm)} is isomorphic to the set $S_N^{\geq (\wop,\wom)}$ with
respect to the Bruhat order. 
\item Each $\hc$-orbit of symplectic leaves is defined by the vanishing and
nonvanishing of certain sets of minors. 
\item Each closure of an $\hc$-orbit of symplectic leaves is defined by the
vanishing of a certain set of minors.
\end{enumerate}
\end{theo}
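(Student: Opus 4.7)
The plan is to realise $\mc_{m,p}(\bbC)$ as an open Poisson subvariety of a larger Poisson homogeneous space whose torus orbits of symplectic leaves are already classified. Concretely, let $G = SL_N$ with $N = m+p$, equipped with its standard complex Poisson--Lie structure, and let $P^- \subset G$ be the parabolic with Levi factor $GL_m \times GL_p$; then $M \in \mc_{m,p}(\bbC)$ can be sent to the class of the lower-unitriangular block matrix with $M$ in the upper right $m \times p$ block, identifying $\mc_{m,p}(\bbC)$ with the opposite big Schubert cell of $G/P^-$. A direct Sklyanin-bracket computation shows that the induced Poisson structure on $G/P^-$ restricts on this cell to the bracket defined on $\pmmpc$ above, while the standard maximal torus $T \subset G$ acts on $G/P^-$ through an action that factors through the $\hc$-action up to a one-dimensional kernel.

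Second, I would import the Kogan--Zelevinsky parametrisation of torus orbits of symplectic leaves on $G$, as extended to complex Poisson--Lie groups by Brown, Yakimov and the first-named author: these orbits are the double Bruhat cells $G^{u,w} = (B_+ u B_+) \cap (B_- w B_-)$ indexed by $(u,w) \in S_N \times S_N$, each smooth, irreducible and locally closed of known dimension, with closures ordered by the componentwise Bruhat order, and with only finitely many. Projecting to $G/P^-$ collapses orbits that differ by right multiplication by the Weyl group of the Levi, and intersecting with the opposite big cell selects precisely the pairs whose projection is parametrised by $\woN \mathcal{S} = S_N^{\geq(\wop,\wom)}$; a short combinatorial check, parallel to \cite[Lemma 3.12]{bgy}, pins this down and yields parts (1) and (2) at once.

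For parts (3) and (4), I would exploit the fact that on each double Bruhat cell of $G/P^-$ every Pl\"ucker coordinate is either identically zero or nowhere vanishing, and that the vanishing pattern characterises the cell uniquely; pulling back through the embedding replaces Pl\"ucker coordinates by the corresponding solid minors of $\Mmpc$, which gives (3), and closures are then cut out by the vanishing of the additional minors associated to the lower cells in Bruhat order, which gives (4). The main obstacle, and where the bulk of the genuine work lies, is the Poisson-theoretic compatibility check: one has to verify that $\mc_{m,p}(\bbC) \hookrightarrow G/P^-$ really is Poisson up to a harmless rescaling, identify precisely the subtorus of $T$ that acts trivially on the quotient so that the $\hc$-action descends faithfully, and then match the resulting combinatorial indexing of cells with the restricted-permutation condition $-p \leq w(i)-i \leq m$ so that the general classification on $G/P^-$ translates cleanly to the stated bijection with $S_N^{\geq(\wop,\wom)}$.
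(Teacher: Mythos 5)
This theorem is not proved in the paper at all: it is quoted verbatim from \cite[Theorems 3.9, 3.13, 4.2]{bgy}, so the only ``proof'' here is the citation (the same circle of ideas resurfaces, again by citation, in the alternate proof of Theorem \ref{tnnHsymp}, which invokes \cite{GY} and Rietsch's results). Your sketch is thus a reconstruction of the proof of the cited results, and in outline it does follow the route actually taken in \cite{bgy}: identify $\Mmpc$ Poisson-isomorphically with the big cell of a partial flag variety (equivalently, of the Grassmannian), classify the torus orbits of symplectic leaves upstairs, and transport the classification and the minor/rank descriptions back through the embedding.

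As a proof, however, there are two genuine gaps. First, the step you dismiss as ``projecting to $G/P^-$'' is precisely the hard part. The projection $G\to G/P^-$ is Poisson, but images of symplectic leaves under Poisson maps need not be leaves, and identifying the $\hc$-orbits of leaves of $G/P^-$ with the projections of the relevant Bruhat-cell intersections (projected open Richardson strata) is the main content of \cite[Theorem 3.9]{bgy} and of \cite{GY}; it needs the Poisson homogeneous space structure and dimension/transversality arguments, not just the Kogan--Zelevinsky classification on $G$ plus a quotient. (Relatedly, you locate ``the bulk of the genuine work'' in checking that the embedding is Poisson; that is a finite computation, essentially \cite[Proposition 3.4]{bgy}, whereas the leaf classification downstairs is where the substance lies.) Second, the dichotomy underpinning your treatment of parts (3) and (4) --- that on each stratum every Pl\"ucker coordinate, hence every minor of $\Mmpc$, is either identically zero or nowhere vanishing --- is false over $\bbC$. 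For instance, in $\mc_{2,2}(\bbC)$ the ideal $(Y_{1,1})$ is not Poisson (since $\{Y_{1,1},Y_{2,2}\}=2Y_{1,2}Y_{2,1}$), so the irreducible hypersurface $\{Y_{1,1}=0\}$ cannot equal, nor be contained in, any proper orbit closure; hence it meets the dense $\hc$-orbit of leaves, and the $1\times1$ minor $Y_{1,1}$ vanishes at some points of that orbit but not identically on it. The correct statement (3) is weaker: each orbit is cut out by the vanishing of one specified family of minors together with the nonvanishing of another specified family (this is the rank-condition form of \cite[Theorem 4.2]{bgy}, reformulated as Theorem \ref{theorem-4.2reformulation}), and only the closure is cut out by vanishing alone. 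Indeed, the gap between the minors that vanish identically on an orbit closure (the family $\mcw$) and naive pointwise vanishing patterns is exactly the subtlety that the present paper is devoted to; over $\bbR_{\ge0}$ the dichotomy you want is Postnikov's cell picture, but it is not available over $\bbC$ and cannot be used to prove (3) and (4).
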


For $y \in S_N^{\geq (\wop,\wom)}$, we denote by $\mathcal{P}_{y}$ the $\hc$-orbit
of symplectic leaves described in \cite[Theorem 3.9]{bgy}.

%%%%%%%%%
\subsection{On the minors that vanish on the closure of an orbit of leaves.}

In this section, we describe explicitly the minors that vanish on a
given $\hc$-orbit of symplectic leaves in $\Mmpc$. For later purposes, we need to
parametrize these $\hc$-orbits by $\mathcal{S}$ rather than by $S_N^{\geq
(\wop,\wom)}$. Hence, factors of $\woN$ are required when carrying over results
from \cite{bgy} (recall \eqref{woNS}).

We identify permutations in $S_N$ with the corresponding permutation matrices in
$\mc_N(\bbZ)$. Thus, $w\in S_N$ is viewed as the matrix with entries $w_{ij}=
\delta_{i,w(j)}$.

Let $w \in \mathcal{S}$, and write $w$ in block form as
\begin{equation} \label{blockform1}
w = \begin{bmatrix}
w_{11} & w_{12}\\
w_{21} &  w_{22}
\end{bmatrix} \qquad\qquad 
\begin{pmatrix} 
w_{11} \in \mc_{m,p}(\bbZ) &w_{12} \in \mc_{m}(\bbZ)\\
w_{21} \in \mc_{p}(\bbZ) &w_{22} 
\in \mc_{p,m}(\bbZ)
\end{pmatrix}.
\end{equation}
Hence, 
\begin{equation} \label{blockform2}
\woN w=\begin{bmatrix}
0 & w_{\circ}^p\\
w_{\circ}^m &  0
\end{bmatrix} 
\begin{bmatrix}
w_{11} & w_{12}\\
w_{21} &  w_{22}
\end{bmatrix}=\begin{bmatrix}
\wop w_{21} & \wop w_{22}\\[1ex]
\wom w_{11} &  \wom w_{12}
\end{bmatrix},
\end{equation}
which is the block form of $\woN w$ as in \cite[\S4.2]{bgy}.

Now \cite[Theorem 4.2]{bgy} shows that the closure
$\overline{\mathcal{P}_{\woN w}}$ of $\mathcal{P}_{\woN w}$ consists of the
matrices $x \in \mc_{m,p}(\bbC)$ such that each of the following
four conditions holds. Here $y[a,\dots,b;c,\dots,d]$ denotes the submatrix of a
matrix $y$ involving the rows $a,\dots,b$ and columns $c,\dots,d$.

\begin{description}
\label{4conditions}
\item[Condition 1.] $\rank ( x[r,\dots,m;1,\dots,s]) \leq 
\rank ( (\wom w_{11})[r,\dots,m;1,\dots,s])$ for
 $r\in\onem$ and $s\in\onep$.
\item[Condition 2.] $ \rank (x[1,\dots,r;s,\dots,p]) \leq 
\rank (( \wom w\transp_{22})[1,\dots,r;s,\dots,p])$ for
 $r\in\onem$ and $s\in\onep$.
\item[Condition 3.] For $2\le r\le s\le p$,\\
$\rank (x[1,\dots,m;r,\dots,s]) \leq 
s + 1 - r - \rank (w_{21}[r,\dots,p;r,\dots,s])$.
\item[Condition 4.] For  $1\le r\le s\le m-1$,\\
$\rank (x[r,\dots,s;1,\dots,p]) \leq 
s + 1 - r - \rank (( \wom w_{12} \wom)[r,\dots,s;1,\dots,s])$.
\item[Modifications.] We can, and do, allow $r=1$ in Condition 3 and $s=m$ in Condition 4, even though these cases are redundant. First, Condition 1 with $r=1$ says
\begin{align*} 
\rank ( x[1,\dots,m;1,\dots,s])  &\leq 
\rank ( (\wom w_{11})[1,\dots,m;1,\dots,s]) \\
 &= \rank ( w_{11}[1,\dots,m;1,\dots,s]) \\
  &= s- \rank (w_{21}[1,\dots,p;1,\dots,s]),
\end{align*}
because appending the first $s$ columns of $w_{21}$ to those of $w_{11}$ yields the first $s$ columns of the permutation $w$. This gives Condition 3 with $r=1$. Second, Condition 1 with $s=p$ says
\begin{align*}
\rank ( x[r,\dots,m;1,\dots,p])  &\leq 
\rank ( (\wom w_{11})[r,\dots,m;1,\dots,p]) \\
&= m+1-r- \rank ( (\wom w_{12})[r,\dots,m;1,\dots,m]) \\
&= m+1-r- \rank ( (\wom w_{12} \wom)[r,\dots,m;1,\dots,m]),
\end{align*}
which gives Condition 4 with $s=m$.
\end{description}

Our next aim is to rewrite these conditions in terms of the vanishing of minors. 
This needs first a result on the vanishing of minors on a Bruhat cell.

Let $R_{m,p}$ denote the set of all partial permutation matrices in 
$\mc_{m,p}(\bbC)$, and identify any $ w \in R_{m,p}$ with the
corresponding bijection from its domain $\mathrm{dom}(w)$ onto its range
$\mathrm{rng}(w)$; thus, $w(j) = i$ if and only if $w_{ij} = 1$. Note that $w\transp$
is the partial permutation matrix corresponding to the inverse bijection $w^{-1} :
\mathrm{rng}(w) \rightarrow \mathrm{dom}(w)$. Let
$B_m^{\pm}$ and $B_p^{\pm}$ denote the standard Borel subgroups in $\mathrm{GL}_m$ 
and $\mathrm{GL}_p$. 

Recall that $I \leq J$, for finite $I, J \subseteq \bbN$ with $|I| = |J|$, 
means that when $I$ and $J$ are written in ascending order, say $I = \{i_1 < \cdots
< i_t\}$ and $J = \{j_1 < \cdots < j_t\}$, we have
$i_l \leq j_l$ for all $l$.

\begin{lem}
\label{singleBruhat}
Let $w \in R_{m,p}$, and let 
$I \subseteq\onem$ 
and $\Lambda \subseteq \onep$ with $|I| = |\Lambda |$.
The following are equivalent:

{\rm (a)} $[I | \Lambda]$ vanishes on $B_m^+ w B_p^+$.

{\rm (b)} $I \nleq w(L)$ for all $L \subseteq \mathrm{dom}(w)$  such that 
$L \leq \Lambda$.

{\rm (c)} $\Lambda \ngeq w^{-1}(L')$ for all $L' \subseteq \mathrm{rng}(w)$ such
that $L' \geq I$.
\end{lem}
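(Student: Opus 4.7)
My approach is to parametrize $B_m^+ w B_p^+$ as the set of products $u w v$ with $u$ an invertible upper triangular $m \times m$ matrix and $v$ an invertible upper triangular $p \times p$ matrix, and to expand $[I|\Lambda](uwv)$ by two applications of the Cauchy--Binet formula. This yields
\[
[I|\Lambda](uwv) \;=\; \sum_{J,K} \det(u_{I,K}) \det(w_{K,J}) \det(v_{J,\Lambda}),
\]
where $J \subseteq \onep$ and $K \subseteq \onem$ range over subsets of size $|\Lambda|$. The task is then to determine which terms can contribute and when the resulting polynomial in the entries of $u$ and $v$ vanishes identically.

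Next I would analyze each factor separately. Since $w$ is a partial permutation matrix, $\det(w_{K,J})$ is nonzero (equal to $\pm 1$) if and only if $J \subseteq \mathrm{dom}(w)$ and $K = w(J)$. The classical characterization of non-vanishing minors of upper triangular matrices tells me that $\det(u_{I,K})$, regarded as a polynomial in the entries of $u$, is nonzero if and only if $I \leq K$; likewise $\det(v_{J,\Lambda})$ is nonzero as a polynomial in the entries of $v$ if and only if $J \leq \Lambda$. Collecting these constraints, the sum collapses to
\[
[I|\Lambda](uwv) \;=\; \sum_{L} \pm \det(u_{I,w(L)}) \det(v_{L,\Lambda}),
\]
indexed by those $L \subseteq \mathrm{dom}(w)$ with $L \leq \Lambda$ and $I \leq w(L)$. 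The existence of such an $L$ is exactly the negation of condition (b).

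The main obstacle is the ``no-cancellation'' step needed to conclude (a)~$\Leftrightarrow$~(b): I must verify that the displayed sum vanishes as a function of $(u,v)$ only when it is empty. For this, writing $L = \{l_1 < \cdots < l_t\}$ and $\Lambda = \{\lambda_1 < \cdots < \lambda_t\}$, I would observe that the diagonal monomial $v_{l_1,\lambda_1} \cdots v_{l_t,\lambda_t}$ from $\det(v_{L,\Lambda})$ appears only in the term indexed by $L$, since distinct $L$'s involve distinct row sets of $v$. Hence distinct $L$'s contribute $\bbC$-linearly independent polynomials in $v$, which forces each term to vanish separately. Because each $\det(u_{I,w(L)})$ is itself a nonzero polynomial in the entries of $u$ (by construction $I \leq w(L)$), vanishing of the whole sum requires that no admissible $L$ exists, which is condition (b).

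Finally, the equivalence (b)~$\Leftrightarrow$~(c) is a purely combinatorial reformulation via the bijection $w \colon \mathrm{dom}(w) \to \mathrm{rng}(w)$. Setting $L' := w(L)$ converts the pair ``$L \leq \Lambda$ and $I \leq w(L)$'' into ``$w^{-1}(L') \leq \Lambda$ and $L' \geq I$'', with $L \subseteq \mathrm{dom}(w)$ if and only if $L' \subseteq \mathrm{rng}(w)$. This substitution is a bijection between witnesses to the negation of (b) and witnesses to the negation of (c), completing the proof.
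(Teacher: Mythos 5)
Your proof is correct, and for the implication (b)$\Rightarrow$(a) it is essentially the paper's argument: a Cauchy--Binet expansion of $[I|\Lambda](uwv)$ together with the fact that a minor $[U|V]$ of an upper triangular matrix vanishes when $U \nleq V$, so that the sum collapses to terms indexed by $L \subseteq \mathrm{dom}(w)$ with $L \leq \Lambda$ and $I \leq w(L)$ (the paper does the same expansion after embedding everything into $\mc_{\mu}(\bbC)$ with $\mu = \max\{m,p\}$, which your rectangular Cauchy--Binet avoids). Where you genuinely diverge is the converse (a)$\Rightarrow$(b): the paper, given a witness $L$ to the failure of (b), evaluates the minor at an explicit point, namely $awb$ where $a = \sum_s e_{i_s,m_s}$ and $b = \sum_s e_{l_s,\lambda_s}$ are partial permutation matrices lying in $\overline{B_m^+}$ and $\overline{B_p^+}$, and uses that $[I|\Lambda]$ vanishes on the Zariski closure $\overline{B_m^+}\,w\,\overline{B_p^+}$ to reach a contradiction from $[I|\Lambda](awb)=\pm1$. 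You instead stay inside the expansion and rule out cancellation: the monomial $v_{l_1,\lambda_1}\cdots v_{l_t,\lambda_t}$ consists of free (on-or-above-diagonal) coordinates since $L\leq\Lambda$, occurs with coefficient $\pm1$ in $\det(v_{L,\Lambda})$, and its row support identifies $L$, so the coefficients $\pm\det(u_{I,w(L)})$ would all have to vanish identically, contradicting $I \leq w(L)$. Both routes are sound; the paper's buys a concrete witness matrix and needs no reasoning about polynomial identities (only that vanishing passes to closures), while yours treats both directions uniformly from a single identity at the cost of the no-cancellation step and of the (routine, but worth stating) observation that vanishing on the invertible upper triangular matrices forces the polynomial in the upper-triangular entries to vanish identically, by Zariski density.
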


Note that as we only allow  $\leq$ for index sets of the same cardinality,
conditions (b) and (c) hold automatically when 
$|I| = |\Lambda| > \mathrm{rank}(w).$

\begin{proof} Set $l := |I| = |\Lambda |$.

(a)$\Rightarrow$(b):  Note that $[I | \Lambda]$ vanishes on $\overline{B_m^+ w
B_p^+}$, and hence on $\overline{B_m^+} w \overline{B_p^+}$. Suppose there exists
$L \subseteq \mathrm{dom}(w)$ such that $L\subseteq \Lambda$  and $I \subseteq
w(L)$. Write the relevant index sets in ascending order:
$$\begin{aligned}
I &= \{i_1 < \cdots < i_l\} &\qquad\qquad\qquad \Lambda &= \{\lambda_1 < \cdots <
\lambda_l\} \\
L &= \{l_1 < \cdots < l_l\} &w(L) &= \{m_1 < \cdots < m_l\}.
\end{aligned}$$
Set $a= \sum_{s=1}^l e_{i_s,m_s}$ and $b = \sum_{s=1}^l e_{l_s,\lambda_s}$, where
$e_{i,\alpha}$ denotes the matrix with a 1 in position $(i,\alpha)$ and 0 anywhere
else. Since $i_s \leq m_s$ and $l_s \leq \lambda_s$ for all $s$, we have $a \in 
\overline{B_m^+}$ and $b \in \overline{B_p^+}$. Also, $a$ and $b$ are partial
permutation matrices, representating bijections $w(L) \rightarrow I$ and $\Lambda
\rightarrow L$, respectively, whence $awb$ is a partial permutation matrix
representing a bijection $\Lambda \rightarrow I$. Therefore 
$[I | \Lambda](awb) = \pm 1$. Since $awb \in \overline{B_m^+} w \overline{B_p^+}$, 
we have a contradiction.

(b)$\Rightarrow$(a): Let $x = awb$ for some $a \in  B_m^+$ and $b \in B_p^+$. 
To show that $[I | \Lambda ](x) = 0$,
we operate in $\mc_{\mu}(\bbC)$ where $\mu = \max\{m, p\}$, and we identify $\mc_m(\bbC)$, $\mc_p(\bbC)$, and $\mc_{m,p}(\bbC)$ with
the upper left blocks of $\mc_{\mu}(\bbC)$ of the appropriate sizes. In particular, $a$  and $b$ remain upper triangular.

The only minors of $w$ which do not vanish are the minors $[w(L) |L]$ for 
$L \subseteq \mathrm{dom}(w)$, and $[w(L) |L](w) = \pm 1$  (depending on the sign
of $w|_L$).  We claim that $[U | V ](a) = 0$ whenever $|U| = |V |$ and $U \nleq V$.
Write $U = \{u_1 < \cdots < u_t\}$ and $V = \{v_1 < \cdots < v_t\}$; then $u_s >
v_s$ for some $s$.  For $\beta \geq s \geq \alpha$, we have $u_{\beta} \geq u_s >
v_s \geq v_{\alpha}$, and so 
$a_{u_{\beta},v_{\alpha}} = 0$ because $a$ is upper triangular. 
Thus, the $U \times V$ submatrix of $a$ has zero lower left $(t + 1 - s) \times s$
block. That makes this submatrix singular, so $[U | V ](a) = 0$, as claimed.
Likewise, $[U | V ](b) = 0$. Now expand $[I | \Lambda ](x)$ in $\mc_{\mu}(\bbC)$,
to obtain 
\[
[I | \Lambda ](x) = [I | \Lambda ](awb) =  \sum_{\substack{K,L\subseteq
\{1,\dots,\mu
\}\\ |K|=|L|=l}} [I |K](a)\, [K |L](w)\, [L| \Lambda ](b).
\]
The terms in the above sum vanish whenever  $I \nleq K$ or $L \nleq \Lambda$, and
they also vanish unless $L \subseteq \mathrm{dom}(w)$ and $K = w(L)$. Hence,
\[ 
[I | \Lambda ](x) = \sum_{\substack{L \subseteq\mathrm{dom}(w), \ |L|=l \\
L \leq \Lambda , \ w(L) \geq I}}
\pm [I |w(L)](a)\, [L| \Lambda ](b).
\]
However, by assumption (b),  there are no index sets $L$ satisfying the conditions
of this summation. Therefore $[I | \Lambda](x) = 0$.

(b)$\Leftrightarrow$(c): Take $L' = w(L)$.
\end{proof}

\begin{prop} 
\label{belongsingleBruhat}
Let $w \in R_{m,p}$ and $x\in \Mmpc$. Then the following conditions are equivalent:

{\rm (a)} $x\in \ov{B_m^+wB_p^+}$.

{\rm (b)} $\rank(x[r,\dots,m;1,\dots,s]) \leq 
\rank(w[r,\dots,m;1,\dots,s])$ for $r\in\onem$ and $s\in\onep$.

{\rm (c)} $[I|\Lambda](x)=0$ for all $I \subseteq\onem$ 
and $\Lambda \subseteq \onep$ with $|I| = |\Lambda |$ such that
\begin{enumerate}
\item[$(*)$] $I \nleq w(L)$ for all $L
\subseteq
\mathrm{dom}(w)$ such that 
$L \leq \Lambda$.
\end{enumerate}
\end{prop}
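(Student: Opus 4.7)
My plan is to prove the cycle (a)$\Rightarrow$(c)$\Rightarrow$(b)$\Rightarrow$(a). The first two implications are essentially combinatorial and rest on Lemma \ref{singleBruhat} together with the rank-in-terms-of-minors definition; the last is the classical Bruhat-closure characterization and will be the main obstacle.

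For (a)$\Rightarrow$(c), let $x\in\overline{B_m^+ w B_p^+}$ and pick any $[I|\Lambda]$ satisfying $(*)$. By Lemma \ref{singleBruhat} this minor vanishes identically on $B_m^+ w B_p^+$; being a polynomial, it also vanishes on the Zariski closure, so $[I|\Lambda](x)=0$.

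For (c)$\Rightarrow$(b), I argue by contrapositive. If (b) fails at some $(r,s)$, then $\rank(x[r,\dots,m;1,\dots,s])>k:=\rank(w[r,\dots,m;1,\dots,s])$, so one can choose $I\subseteq\{r,\dots,m\}$ and $\Lambda\subseteq\{1,\dots,s\}$ of cardinality $k+1$ with $[I|\Lambda](x)\neq 0$. The key point is that $(*)$ automatically holds for this pair: if some $L\subseteq\mathrm{dom}(w)$ satisfied $L\leq\Lambda$ and $I\leq w(L)$, then $\max L\leq\max\Lambda\leq s$ and $\min w(L)\geq\min I\geq r$, forcing $L\subseteq\{1,\dots,s\}$ and (since $w(L)\subseteq\{1,\dots,m\}$) $w(L)\subseteq\{r,\dots,m\}$; the $k+1$ pairs $(w(\ell),\ell)$ for $\ell\in L$ would then be $k+1$ nonzero entries of $w[r,\dots,m;1,\dots,s]$, contradicting that its rank is $k$. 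So $(*)$ holds, and by (c), $[I|\Lambda](x)=0$, a contradiction.

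The remaining direction (b)$\Rightarrow$(a) is the main obstacle; it is the classical description of the closure of a Bruhat double coset in $\Mmpc$ by rank conditions on bottom-left submatrices. The route I would follow is: for any $a\in B_m^+$, $b\in B_p^+$ and $y\in\Mmpc$, upper-triangularity gives the block identity $(ayb)[r,\dots,m;1,\dots,s]=a[r,\dots,m;r,\dots,m]\,y[r,\dots,m;1,\dots,s]\,b[1,\dots,s;1,\dots,s]$ with both outer factors invertible; hence each $\rho_{r,s}(y):=\rank(y[r,\dots,m;1,\dots,s])$ is invariant under the two-sided Borel action. Every $x\in\Mmpc$ lies in a unique double coset $B_m^+ v B_p^+$ for some $v\in R_{m,p}$ (the standard Bruhat decomposition of the rectangular matrix space, which extends the $\mathrm{GL}$-version to singular matrices), and the map $v\mapsto\rho_{\cdot,\cdot}(v)$ separates these orbits. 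Condition (b) then reads $\rho_{r,s}(v)=\rho_{r,s}(x)\leq\rho_{r,s}(w)$ for every $(r,s)$, which is the rank-matrix formulation of the Bruhat order on $R_{m,p}$ and is equivalent to $B_m^+ v B_p^+\subseteq\overline{B_m^+ w B_p^+}$; thus $x$ lies in the closure as required.
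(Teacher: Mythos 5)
Your implications (a)$\Rightarrow$(c) and (c)$\Rightarrow$(b) are correct and are essentially the paper's own argument: (a)$\Rightarrow$(c) is Lemma \ref{singleBruhat} plus the fact that a polynomial vanishing on a set vanishes on its Zariski closure, and your contrapositive for (c)$\Rightarrow$(b) is the same combinatorial point the paper makes, namely that any minor $[I|\Lambda]$ with $I\subseteq\gc r,m\dc$, $\Lambda\subseteq\gc 1,s\dc$ and $|I|=|\Lambda|>\rank(w[r,\dots,m;1,\dots,s])$ automatically satisfies $(*)$ (your deduction that $L\leq\Lambda$ forces $L\subseteq\gc 1,s\dc$ and $I\leq w(L)$ forces $w(L)\subseteq\gc r,m\dc$, giving too many independent nonzero entries in the corner submatrix of $w$, is exactly the paper's).

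The gap is in (b)$\Rightarrow$(a). Your preliminary steps are fine: invariance of the corner ranks $\rho_{r,s}$ under the $B_m^+\times B_p^+$ action, the decomposition of $\Mmpc$ into double cosets indexed by partial permutation matrices, and the fact that the rank matrix determines the partial permutation. But these only reduce the problem to the statement: if $v\in R_{m,p}$ satisfies $\rho_{r,s}(v)\leq\rho_{r,s}(w)$ for all $(r,s)$, then $B_m^+vB_p^+\subseteq\ov{B_m^+wB_p^+}$. At that point you assert that the rank inequalities are ``equivalent to'' this closure inclusion --- but that equivalence \emph{is} the hard direction you set out to prove; nothing in the Borel-invariance or orbit-decomposition steps produces it, and an honest proof requires either an explicit degeneration argument showing every orbit with smaller rank matrix lies in the closure, or a reference. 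The paper does not prove it either: it invokes \cite[Proposition 3.3(a)]{ful}, as recorded in \cite[Proposition 4.1]{bgy}. So your write-up should either cite that result or supply the closure argument; as it stands, (b)$\Rightarrow$(a) is restated at the orbit level rather than proved.
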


\begin{proof} (a)$\Rightarrow$(c) is immediate from Lemma \ref{singleBruhat}, and
(b)$\Rightarrow$(a) follows from \cite[Proposition 3.3(a)]{ful} (as noted in
\cite[Proposition 4.1]{bgy}).

(c)$\Rightarrow$(b): It suffices to show, for any $r\in\onem$ and $s\in\onep$, that
$[I|\Lambda](x)=0$ whenever $I \subseteq \gc r, m \dc$ and $\Lambda \subseteq
\gc 1, s \dc$ with 
$$|I|=|\Lambda| > \rho_{rs}:= \rank(w[r,\dots,m;1,\dots,s]).$$

Let $I$ and $\Lambda$ satisfy the above conditions, and suppose there exists $L
\subseteq
\mathrm{dom}(w)$ such that 
$L \leq \Lambda$ and $I\le w(L)$. Since $\Lambda \subseteq
\gc 1, s \dc$ and $L\leq \Lambda$, we have $L \subseteq
\gc 1, s \dc$. Then, since $|L|= |\Lambda| > \rho_{rs}$, we must have $w(L)
\nsubseteq \gc r, m \dc$, whence $\min w(L) <r$. But that implies $\min I <r$
(because $I\leq w(L)$), contradicting the assumption that $I \subseteq
\gc r, m \dc$. Thus, $I$ and $\Lambda$ satisfy $(*)$, and so $[I|\Lambda](x)=0$
by hypothesis. 
\end{proof}

Similarly, we have the following result.

\begin{prop} 
\label{belongsingleBruhat2}
Let $w \in R_{m,p}$ and $x\in \Mmpc$. Then the following conditions are equivalent:

{\rm (a)} $x\in \ov{B_m^-wB_p^-}$.

{\rm (b)} $\rank(x[1,\dots,r;s,\dots,p]) \leq 
\rank(w[1,\dots,r;s,\dots,p])$ for $r\in\onem$ and $s\in\onep$.

{\rm (c)} $[I|\Lambda](x)=0$ for all $I \subseteq\onem$ 
and $\Lambda \subseteq \onep$ with $|I| = |\Lambda |$ such that
\begin{enumerate}
\item[$(*)$] $\Lambda \nleq w^{-1}(L)$ for all $L \subseteq \mathrm{rng}(w)$  such
that $L \leq I$.
\end{enumerate}
\end{prop}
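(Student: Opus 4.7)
The plan is to reduce Proposition \ref{belongsingleBruhat2} to Proposition \ref{belongsingleBruhat} via a ``row and column reversal'' involution. Let $\sigma_m, \sigma_p$ denote the permutation matrices associated to $\wom, \wop$, and define $\Phi : \Mmpc \to \Mmpc$ by $\Phi(x) := \sigma_m x \sigma_p$, together with $w' := \sigma_m w \sigma_p \in R_{m,p}$. Since conjugation by the longest Weyl group element swaps the two Borel subgroups, $\sigma_m B_m^{\pm} \sigma_m = B_m^{\mp}$ and $\sigma_p B_p^{\pm} \sigma_p = B_p^{\mp}$, so
$$\Phi\!\left( \ov{B_m^- w B_p^-} \right) \;=\; \ov{B_m^+ w' B_p^+}.$$
Thus condition (a) of Proposition \ref{belongsingleBruhat2} for $(x,w)$ coincides with condition (a) of Proposition \ref{belongsingleBruhat} applied to $(\Phi(x), w')$. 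The strategy is now to check that conditions (b) and (c) translate in the same manner, and to conclude by invoking Proposition \ref{belongsingleBruhat}.

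For (b), a direct computation gives $\Phi(x)_{ij} = x_{m+1-i,\,p+1-j}$. Hence the submatrix $x[1,\dots,r;\,s,\dots,p]$ coincides, after reversing rows and columns, with $\Phi(x)[m+1-r,\dots,m;\,1,\dots,p+1-s]$, so these two submatrices have equal rank; the analogous identity holds for $w$ in place of $x$. Letting $(r,s)$ range over $\onem \times \onep$ (so that $(m+1-r,\,p+1-s)$ does the same), the rank inequalities of (b) here become exactly those of Proposition \ref{belongsingleBruhat}(b) for $(\Phi(x), w')$.

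For (c), I would rely on two observations. First, a short row/column swap argument gives $[I|\Lambda](x) = \pm\,[\sigma_m(I) \mid \sigma_p(\Lambda)](\Phi(x))$, so the vanishing of minors is preserved by $\Phi$. Second, for finite subsets $A, B \subseteq [1,n]$ of equal cardinality, the reversal $\sigma_n(A) := \{n+1-a : a \in A\}$ satisfies $\sigma_n(A) \leq \sigma_n(B) \Longleftrightarrow A \geq B$. Setting $I' := \sigma_m(I)$ and $\Lambda' := \sigma_p(\Lambda)$, one checks that $L \mapsto L' := \sigma_p(w^{-1}(L))$ is a bijection between the subsets $L \subseteq \mathrm{rng}(w)$ and the subsets $L' \subseteq \mathrm{dom}(w')$, and that $w'(L') = \sigma_m(L)$. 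Using the order-reversal identity twice, the hypothesis and the failure condition appearing in $(*)$ translate as $L \leq I \Longleftrightarrow w'(L') \geq I'$ and $\Lambda \leq w^{-1}(L) \Longleftrightarrow L' \leq \Lambda'$. Consequently $(*)$ becomes ``$I' \nleq w'(L')$ for every $L' \subseteq \mathrm{dom}(w')$ with $L' \leq \Lambda'$'', i.e., precisely the condition $(*)$ of Proposition \ref{belongsingleBruhat}(c) for $(\Phi(x), w')$. Applying Proposition \ref{belongsingleBruhat} to the pair $(\Phi(x), w')$ then yields the equivalence (a)$\Leftrightarrow$(b)$\Leftrightarrow$(c) for $(x,w)$. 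The main obstacle is keeping the bookkeeping straight in (c): the order-reversing behavior of $\sigma$ must be tracked simultaneously with the conjugation $w \mapsto w'$, but once both effects are combined, the conditions match on the nose.
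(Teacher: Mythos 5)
Your argument is correct. The paper's own proof is a one-line reduction by transposition: it observes that $(B_m^- w B_p^-)\transp = B_p^+ w\transp B_m^+$ and then applies Proposition \ref{belongsingleBruhat} with the roles of $m$, $p$ and of $I$, $\Lambda$ interchanged; the translation of condition $(*)$ is then essentially automatic, since $\mathrm{dom}(w\transp)=\mathrm{rng}(w)$ and $w\transp(L)=w^{-1}(L)$, so no reindexing of the partial order is needed. You instead use the symmetry $x\mapsto \wom x\wop$, which swaps the Borels by conjugation and replaces $w$ by $w'=\wom w\wop$; this keeps everything inside $\Mmpc$ (no passage to $\mc_{p,m}(\bbC)$ and no swap of $m$ with $p$), but at the price of the order-reversal bookkeeping: the anti-isomorphism $A\mapsto \sigma_n(A)$ of the partial order on index sets, the bijection $L\mapsto L'=\sigma_p(w^{-1}(L))$ from subsets of $\mathrm{rng}(w)$ to subsets of $\mathrm{dom}(w')$ with $w'(L')=\sigma_m(L)$, and the sign-irrelevant identity $[I|\Lambda](x)=\pm[\sigma_m(I)|\sigma_p(\Lambda)](\Phi(x))$. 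All of these translations check out, and conditions (a), (b), (c) for $(x,w)$ do match conditions (a), (b), (c) of Proposition \ref{belongsingleBruhat} for $(\Phi(x),w')$ exactly as you claim, so invoking that proposition finishes the proof. In short: both are one-step reductions to Proposition \ref{belongsingleBruhat} via a symmetry of the Bruhat-type decomposition; the paper's transpose is the more economical choice here, while your reversal map has the mild advantage of never leaving the space $\Mmpc$ or exchanging row and column data.
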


\begin{proof}
Observe that $(B_m^- w B_p^-)\transp=B_p^+ w\transp B_m^+$. 
Switch the roles of $m$, $p$, $I$ and $\Lambda$ in the previous proposition, and
take the transpose.
\end{proof}

We can now reformulate the four rank conditions above (as modified to extend the ranges of Conditions 3 and 4)  in terms of the vanishing of
certain minors.

\begin{obs} \label{translateconditions1234}
Let $w\in \mathcal{S}$, and note that $w\woN= \begin{bmatrix}
w_{12}\wom &w_{11}\wop\\ w_{22}\wom &w_{21}\wop \end{bmatrix}$.
\medskip

\noindent $\bullet$ Condition 1. By Proposition \ref{belongsingleBruhat}, this
occurs if and only if $[I | \Lambda ] (x) = 0$ whenever $I \nleq \wom  w_{11}(L)$
for all
$L \subseteq \mathrm{dom}(\wom w_{11})= \mathrm{dom}(w_{11})$ with $|L| = |I|$ and
$L \leq \Lambda$. Thus, it occurs if and only if 
\begin{enumerate}
\item[] $[I | \Lambda](x) = 0 \mbox{ whenever } I \nleq \wom 
w(L)$ for all
 $L \subseteq \onep \cap w^{-1}\onem$ with
$|L| = |I|$ and $L \leq \Lambda$.
\end{enumerate}

\noindent $\bullet$  Condition 2. By Proposition \ref{belongsingleBruhat2}, this
occurs if and only if $[I | \Lambda ](x) = 0$ whenever $\Lambda \nleq (\wom 
w_{22}\transp)^{-1}(L)$ for all $L \subseteq \mathrm{rng}(\wom 
w_{22}\transp) = \mathrm{dom}(w_{22}\wom)$ with $|L| = |\Lambda |$ and $L
\leq I$. Thus, it occurs if and only if 
\begin{enumerate}
\item[] $[I | \Lambda](x) = 0$ whenever $m+\Lambda \nleq w \woN 
(L)$ for all
$L \subseteq \onem \cap \woN w^{-1}\gc m+1, N \dc$
with $|L| = |\Lambda| \mbox{ and } L \leq I$.
\end{enumerate}

\noindent $\bullet$  Condition 3. The rank of $w_{21}{[r,\dots,p;r,\dots,s]}$
is the number of $j$ in the set $\gc r,s\dc \cap \mathrm{dom}(w_{21})$ such that
$w_{21}(j) \geq r$, hence the number of $j \in \gc r, s\dc$ such that
$w(j)\geq m + r$. Consequently,
\[
s + 1 - r - \mathrm{rank}(w_{21}{[r,\dots,p;r,\dots,s]}) = |\{
j \in \gc r, s \dc \mid w(j) < m + r\}|.
\]
Since $w(j) \leq m + s$ for $j \in \gc r, s\dc$ (because $w \in \mathcal{S}$),
Condition 3 may be rewritten as:
\begin{enumerate}
\item[] $[I | \Lambda ](x) = 0$ whenever $\Lambda \subseteq \gc r, s \dc$
and
$|\Lambda | > | \gc r, s \dc \setminus w^{-1}\gc m+r ,\, m+s
\dc|$.
\end{enumerate}
Using a Laplace expansion, it is easy to see that this is equivalent to 
\begin{enumerate}
\item[] $[I | \Lambda ](x) = 0 $ whenever there exist $\Lambda'
\subseteq \Lambda$ and $ 1\le r \leq s\le p$ such that
 $\Lambda' \subseteq \gc  r, s \dc$
 and $|\Lambda' | > | \gc  r, s \dc\setminus w^{-1}\gc m+r , m+s \dc|$, that is, whenever there exist $ 1\le r \leq s\le p$ such that $|\Lambda\cap \gc  r, s \dc| > | \gc  r, s \dc\setminus w^{-1}\gc m+r , m+s \dc|$.
\end{enumerate}

\noindent $\bullet$ Condition 4. The rank of $(\wom
w_{12}\wom){[r,\dots,s;1,\dots,s]}$ is the number of $j$ in $\gc 1,s\dc \cap
\wom\mathrm{dom}(w_{12})$ such that $\wom w_{12}\wom (j) \in \gc r, s\dc$, and
so equals the number of $j \in \gc 1, s\dc$ such that $w(N+1-j) \in \wom \gc r, s \dc$. For $j < r$, we have 
$$w(N+1-j) \geq (N+1-j)-p=m+1-j >  m + 1-r$$
(because $w\in \mathcal{S}$), and so the rank of 
$(\wom w_{12}\wom){[r,\dots,s;1,\dots,s]}$ equals the number of 
$j \in \gc r, s\dc$ such that $w(N+1-j)\in \wom\gc r, s\dc$. Hence,
\begin{multline}
s + 1 - r - \mathrm{rank}((\wom w_{12}\wom){[r,\dots,s;1,\dots,s]})\\
 = |\{ 
j' \in \woN\gc r, s \dc \mid  w(j') \notin \wom\gc r, s\dc \}|.
\notag\end{multline}
Consequently, Condition 4 may be rewritten as:
\begin{enumerate}
\item[] $[I | \Lambda ](x) = 0$ whenever $I \subseteq \gc r, s \dc$
and
 $|I | > | \woN\gc r, s \dc \setminus w^{-1}\wom\gc r, s \dc|$.
\end{enumerate}
Using a Laplace expansion (as with Condition 3), it is easy to see that this is equivalent to
\begin{enumerate}
\item[] $[I | \Lambda ](x) = 0$ whenever there exist $1\le r \leq s\le m$ such that
 $|I\cap \gc r, s \dc| > |
\woN\gc r, s \dc \setminus w^{-1}\wom\gc r, s \dc|$.
\end{enumerate}
\end{obs}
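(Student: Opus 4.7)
The four bullet points correspond to rewriting Conditions 1--4 in terms of minor vanishing, and the natural strategy is: handle Conditions~1 and~2 via Propositions~\ref{belongsingleBruhat} and~\ref{belongsingleBruhat2}, and handle Conditions~3 and~4 directly by translating ranks of submatrices into vanishing of sufficiently large minors, combined with a Laplace expansion argument.

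For Condition~1, I would apply Proposition~\ref{belongsingleBruhat} to the partial permutation matrix $\wom w_{11}\in R_{m,p}$. The rank inequalities in Condition~1 are exactly those in part~(b) of that proposition for this $w$, so they are equivalent to part~(c), which gives vanishing of $[I|\Lambda](x)$ whenever $I\nleq (\wom w_{11})(L)$ for every $L\subseteq \mathrm{dom}(\wom w_{11})$ with $L\le\Lambda$. The only thing to check is the elementary identification $\mathrm{dom}(\wom w_{11})=\mathrm{dom}(w_{11})=\onep\cap w^{-1}\onem$ and, on this domain, $\wom w_{11}(L)=\wom w(L)$ (since $w_{11}$ is just the restriction of $w$ to this set of columns). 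This yields the first bullet. Condition~2 is handled analogously using Proposition~\ref{belongsingleBruhat2} applied to $\wom w_{22}\transp$; the bookkeeping needed is that $\mathrm{rng}(\wom w_{22}\transp)=\wom\mathrm{dom}(w_{22})=\mathrm{dom}(w_{22}\wom)$, and that for $l\in\mathrm{dom}(w_{22}\wom)$ one has $w_{22}\wom(l)=w(N{+}1{-}l)-m$. Reindexing by $\woN$ converts ``$L\subseteq\mathrm{dom}(w_{22}\wom)$'' into ``$\woN L\subseteq \onep\cap w^{-1}\gc m{+}1,N\dc$'' (equivalently, $L\subseteq \onem\cap \woN w^{-1}\gc m{+}1,N\dc$) and converts the inequality $\Lambda\nleq w_{22}\wom(L)$ into $m+\Lambda\nleq w\woN(L)$ after the usual re-sorting.

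For Condition~3, the strategy is different: the submatrix $x[1,\dots,m;r,\dots,s]$ has all $m$ rows, so a rank bound of $t$ is equivalent to the vanishing of every $(t{+}1)$-minor $[I|\Lambda](x)$ with $I\subseteq\onem$ and $\Lambda\subseteq\gc r,s\dc$. What remains is the combinatorial identification of the threshold $s+1-r-\rank(w_{21}[r,\dots,p;r,\dots,s])$. Since $w_{21}$ encodes exactly the $j\in\onep$ with $w(j)\in\gc m{+}1,N\dc$ (with $w_{21}(j)=w(j)-m$ there), the rank of that submatrix counts $j\in\gc r,s\dc$ with $w(j)\ge m+r$; and because $w\in\mathcal{S}$ forces $w(j)\le m+s$ for $j\le s$, the complement count is $|\gc r,s\dc\setminus w^{-1}\gc m{+}r,m{+}s\dc|$, giving the first formulation of the bullet for Condition~3. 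The second (``$\Lambda'\subseteq\Lambda$'') formulation follows from the first by Laplace expansion along the columns indexed by $\Lambda':=\Lambda\cap\gc r,s\dc$: writing $[I|\Lambda](x)$ as a signed sum of products $[I'|\Lambda'](x)\cdot[I\setminus I'|\Lambda\setminus\Lambda'](x)$ over $I'\subseteq I$ with $|I'|=|\Lambda'|$, the first formulation forces each factor $[I'|\Lambda'](x)$ to vanish. The converse is trivial.

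Condition~4 is the mirror of Condition~3 along the diagonal. Here $\rank((\wom w_{12}\wom)[r,\dots,s;1,\dots,s])$ counts $j\in\gc 1,s\dc$ with $w(N{+}1{-}j)\in\wom\gc r,s\dc$; the constraint $w\in\mathcal{S}$ (specifically $w(N{+}1{-}j)\ge m{+}1{-}j$) rules out $j<r$, so the count reduces to $j\in\gc r,s\dc$, and subtraction yields $|\woN\gc r,s\dc\setminus w^{-1}\wom\gc r,s\dc|$. The minor-vanishing restatement and the Laplace-expansion upgrade to the ``$I\cap\gc r,s\dc$'' form then proceed exactly as for Condition~3, with rows playing the role of columns. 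The main obstacle throughout is not any single conceptual step but the careful tracking of the index conjugations by $\wom,\wop,\woN$ and the consistent use of ``$\leq$'' on equal-cardinality sorted index sets; once the identifications $\mathrm{dom}(w_{11})=\onep\cap w^{-1}\onem$ and $\mathrm{dom}(w_{22}\wom)=\wom\mathrm{dom}(w_{22})$ are in hand, everything else is bookkeeping and Laplace expansion.
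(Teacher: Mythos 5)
Your proposal matches the paper's own treatment: Conditions 1 and 2 are translated exactly as in the paper by applying Propositions \ref{belongsingleBruhat} and \ref{belongsingleBruhat2} to the partial permutations $\wom w_{11}$ and $\wom w_{22}\transp$ (with the same identifications $\mathrm{dom}(w_{11})=\onep\cap w^{-1}\onem$ and $\mathrm{rng}(\wom w_{22}\transp)=\mathrm{dom}(w_{22}\wom)$), and Conditions 3 and 4 are handled as in the paper by counting the ranks of the relevant $w_{21}$ and $\wom w_{12}\wom$ submatrices using $w\in\mathcal{S}$ and then upgrading the stripe form via Laplace expansion. Only a harmless slip: in your Condition 2 reindexing the intermediate containment should read $\woN L\subseteq \gc p+1,N\dc\cap w^{-1}\gc m+1,N\dc$ rather than $\onep\cap w^{-1}\gc m+1,N\dc$, but your final form $L\subseteq\onem\cap\woN w^{-1}\gc m+1,N\dc$ is the correct one.
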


In view of Observations \ref{translateconditions1234}, it is natural to introduce
the following notation.

\begin{defi} \label{definition-M(w)}
For $w\in \mathcal{S}$, define $\mcw$ to be the set of minors $[I|\Lambda]$, with
$I\subseteq \onem$ and $\Lambda\subseteq \onep$, that satisfy at least one
of the following conditions. 
\begin{enumerate}
\item $I\not\leq \wom w(L)$ for all $L\subseteq \onep\cap
w^{-1}\onem$ such that $|L|=|I|$ and $L\leq \Lambda$. 
\item $m+\Lambda\not\leq 
w\woN(L)$ for all $L\subseteq \onem\cap
\woN w^{-1}\gc m+1, N\dc$ such that $|L|=|\Lambda|$ and $L\leq I$. 
\item There exist $1\leq r \leq s\leq p$ such that $|\Lambda\cap \gc  r, s \dc| > | \gc  r, s \dc\setminus w^{-1}\gc m+r , m+s \dc|$.
\item There exist $1\leq r \leq s\leq m$ such that $|I\cap \gc r, s \dc| > |
\woN\gc r, s \dc \setminus w^{-1}\wom\gc r, s \dc|$.
\end{enumerate}
\end{defi}

\begin{remarks} \label{Mwexplain}
The collections of minors of types 1--4 appearing in $\mcw$ for a given $w\in \mathcal{S}$ can be described in the following way:
\begin{enumerate}
\item ``Down-left-closed'' sets of minors, meaning that the row index set $I$ is allowed to increase (with respect to our ordering on index sets) while $\Lambda$ is allowed to decrease.
\item ``Up-right-closed'' sets, meaning that $I$ is allowed to decrease while $\Lambda$ is allowed to increase.
\item ``Vertical stripes'' of the form $[-|\Lambda] := \{ [I|\Lambda] \mid I \subseteq \gc 1,m\dc \text{\ with\ } |I|=|\Lambda| \}$.
\item ``Horizontal stripes'' of the form $[I|-]$.
\end{enumerate}
These descriptions  are clear from the corresponding conditions in Definition \ref{definition-M(w)}. 

Many of the minors in these sets appear automatically as a consequence of the appearance of others. For instance, if $\mcw$ contains a vertical stripe $[-|\Lambda]$, then it necessarily contains all the vertical stripes $[-|\Lambda']$ for which $\Lambda \subseteq \Lambda' \subseteq \onep$, since any minor in $[-|\Lambda']$ can be expressed in terms of minors from $[-|\Lambda]$ and complementary minors, via a Laplace relation.

Further information about the shapes of these sets is related to the block decomposition \eqref{blockform1} of $w$. By Propositions \ref{belongsingleBruhat} and \ref{belongsingleBruhat2} and Observations \ref{translateconditions1234}, the minors of type 1 in $\mcw$ are precisely the minors that vanish on $\overline{B_m^+ \wom w_{11} B_p^+}$, while the minors of type 2 are those which vanish on $\overline{B_m^- \wom w_{22}\transp B_p^-}$. (Recall the original forms of Conditions 1 and 2 at the beginning of this subsection.) Consequently,
\begin{enumerate}
\item[(a)] Whenever $1\le r\le m$ and $1\le s\le p$ with 
$$\rank((\wom w_{11}) [r,\dots,m;1,\dots,s]) < t:= \min\{m+1-r,\; s\},$$
all $t\times t$ minors $[I|\Lambda]$ with $I\subseteq \gc r,m\dc$ and $\Lambda\subseteq \gc 1,s\dc$ must lie in $\mcw$.
\item[(b)] Whenever $1\le r\le m$ and $1\le s\le p$ with 
$$\rank(( \wom w_{22}\transp) [1,\dots,r;s,\dots,p]) < t':= \min\{r,\; p+1-s\},$$
all $t'\times t'$ minors $[I|\Lambda]$ with $I\subseteq \gc 1,r\dc$ and $\Lambda\subseteq \gc s,p\dc$ must lie in $\mcw$.
\end{enumerate} 
\end{remarks}

\begin{examples}  \label{Mwexamples}
{\bf (a)} Take $w= \begin{bmatrix} 1&2&3&4&5&6&7\\ 3&1&4&2&7&6&5 \end{bmatrix}$, and let $m=3$ and $p=4$. Then
$$w_\circ^3 w_{11}= \begin{bmatrix} 1&0&0&0\\ 0&0&0&1\\ 0&1&0&0 \end{bmatrix} \qquad\qquad\qquad w^3_\circ w_{22}\transp = \begin{bmatrix} 0&1&0&0\\ 0&0&1&0\\ 0&0&0&1 \end{bmatrix}.$$
Observe that the $\gc 2,3\dc \times \{1\}$, $\gc 2,3\dc \times \gc 1,3\dc$, and $\gc 1,3\dc \times \gc 1,3\dc$ submatrices of $w_\circ^3 w_{11}$ have ranks $0$, $1$, and $2$, respectively. Hence, point (a) of Remarks \ref{Mwexplain} immediately implies that $\mcw$ contains the following minors:
$$[2|1],\, [3|1],\, [2,3|1,2],\, [2,3|1,3],\, [2,3|2,3],\, [1,2,3|1,2,3].$$
In fact, the only other minor of type 1 in $\mcw$ is $[2,3|1,4]$.
Of course, some of these are consequences of the others -- once we have $[2|1], [3|1] \in \mcw$, we must have $[2,3|1,2], [2,3|1,3], [2,3 |1,4] \in \mcw$, and once we have $[2|1], [3|1], [2,3|2,3] \in \mcw$, we must have $[1,2,3|1,2,3] \in \mcw$.

Similarly, the minors of type 2 in $\mcw$ are $[1|3],\, [1|4],\, [2|4],\, [1,2|1,4],\, [1,2|2,4]$, $[1,2|3,4],\, [1,3|3,4]$. 
Only $[1,2,3|1,2,3]$ occurs in type 3 (as a vertical stripe), and no horizontal stripes occur because $w_{12}$ is a zero matrix. Thus,
\begin{align*}
\mcw &= \{ [2|1],\, [3|1],\, [1|3],\, [1|4],\, [2|4],\, [1,2|1,4],\, [1,2|2,4],\, [1,2|3,4],\, [1,3|3,4], \\
 &\qquad [2,3|1,2],\, [2,3|1,3],\, [2,3|2,3],\, [2,3|1,4],\, [1,2,3|1,2,3] \}.
 \end{align*}
 
{\bf (b)} Now take $w= \begin{bmatrix} 1&2&3&4&5&6&7&8\\ 1&3&6&4&5&2&7&8 \end{bmatrix}$, and let $m=p=4$. In this case, the submatrix $w_{21}$ of \eqref{blockform1} has the form $w_{21}= \begin{bmatrix} 0&0&0&0\\[-0.8ex] 0&0&1&0\\[-0.8ex] 0&0&0&0\\[-0.8ex] 0&0&0&0 \end{bmatrix}$, and the submatrices $w_{21}[r,\dots,4;r,\dots,s]$ have rank 1 precisely when $r\le 2$ and $s\ge 3$. It follows from the original Condition 3 that the minors of type 3 in $\mcw$ are those which vanish on the matrices $x$ for which $x[1,\dots,4;r,\dots,s]$ has rank at most $s-r$ with $r\le 2$ and $s\ge 3$. These consist of four vertical stripes: $[-|1,2,3]$,  $[-|1,2,3,4]$, $[-|2,3]$, $[-|2,3,4]$. Of course, once we have $[-|2,3] \subseteq \mcw$, the other three stripes must be in $\mcw$ as well.
 
Similarly, the minors of type 4 in $\mcw$ consist of eight horizontal stripes: 
$$[3|-],\, [1,3|-],\, [2,3|-],\, [3,4|-],\, [1,2,3|-],\, [1,3,4|-], \, 
[2,3,4|-],\, [1,2,3,4|-].$$
As it turns out, the minors of types 1 and 2 in $\mcw$ are already included among those of types 3 and 4. Thus, $\mcw$ equals the union of the above stripes.
\end{examples} 

As a result of the foregoing discussion, \cite[Theorem 4.2]{bgy} may be
reformulated in the following way. 

\begin{theo}\label{theorem-4.2reformulation} 
Let $w\in \mathcal{S}$. Then
\[
\overline{\mathcal{P}_{\woN w}}
=\{x\in \Mmpc\mid [I|J](x)=0~{\rm for~all}~[I|J]\in\mcw\}.
\]
\end{theo}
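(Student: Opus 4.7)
The plan is to assemble the theorem directly from the ingredients already laid out in the preceding subsection: the rank-based description of $\overline{\mathcal{P}_{\woN w}}$ given by \cite[Theorem 4.2]{bgy} together with the Modifications extending the ranges of Conditions 3 and 4, combined with the minor-vanishing translations carried out in Observations \ref{translateconditions1234}. Since Definition \ref{definition-M(w)} collects precisely the four families of minors produced by those translations, the theorem will reduce essentially to a citation plus bookkeeping.

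For the forward implication, I would start from $x\in\overline{\mathcal{P}_{\woN w}}$, invoke \cite[Theorem 4.2]{bgy} (with the Modifications) to obtain Conditions 1--4, and then apply the appropriate half of each bullet in Observations \ref{translateconditions1234}. The translations for Conditions 1 and 2 rest on Propositions \ref{belongsingleBruhat} and \ref{belongsingleBruhat2} applied to the partial permutation matrices $\wom w_{11}$ and $\wom w_{22}\transp$ respectively; they yield exactly the vanishing of all type~1 and type~2 minors in $\mcw$. For Conditions 3 and 4, one first unwinds the numerical equalities (already verified in Observations \ref{translateconditions1234}) expressing $s+1-r-\rank(w_{21}[r,\dots,p;r,\dots,s])$ as $|\gc r,s\dc \setminus w^{-1}\gc m+r, m+s\dc|$ and similarly for the other term, giving the vanishing of all $[I|\Lambda](x)$ with $\Lambda\subseteq \gc r,s\dc$ (resp.\ $I\subseteq \gc r,s\dc$) of size above the relevant threshold; a Laplace expansion along columns in $\Lambda\cap\gc r,s\dc$ (resp.\ rows in $I\cap\gc r,s\dc$) then extends the vanishing to all minors satisfying clause (3) (resp.\ (4)) of Definition \ref{definition-M(w)}.

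For the reverse implication, I would assume $[I|\Lambda](x)=0$ for every $[I|\Lambda]\in\mcw$ and read off each of the four rank conditions by applying the converse direction in the corresponding Observation. Propositions \ref{belongsingleBruhat}(c)$\Rightarrow$(b) and \ref{belongsingleBruhat2}(c)$\Rightarrow$(b) recover Conditions 1 and 2 from the vanishing of the type~1 and type~2 minors, after identifying $\wom w_{11}$ and $\wom w_{22}\transp$ as the relevant partial permutations. For Conditions 3 and 4, the vanishing of all minors $[I|\Lambda](x)$ with $\Lambda\subseteq \gc r,s\dc$ (resp.\ $I\subseteq \gc r,s\dc$) of size exceeding the Definition \ref{definition-M(w)} thresholds forces the corresponding rank bound on the submatrix $x[1,\dots,m;r,\dots,s]$ (resp.\ $x[r,\dots,s;1,\dots,p]$). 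Combining gives $x\in\overline{\mathcal{P}_{\woN w}}$.

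There is no substantive obstacle: all the real work has been done in Propositions \ref{belongsingleBruhat}--\ref{belongsingleBruhat2} and Observations \ref{translateconditions1234}, and the role of Definition \ref{definition-M(w)} is precisely to package the outcome. The only care required is to verify the numerical identities relating the ranks of the four block-submatrices of $w$ to the cardinalities appearing in Definition \ref{definition-M(w)}, and to keep straight the roles of $\wom$, $\woN$, transposition, and the block decomposition \eqref{blockform1} when reading off which partial permutation governs each of Conditions 1 and 2.
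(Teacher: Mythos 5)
Your proposal is correct and matches the paper's own treatment: the paper states this theorem precisely as a reformulation of \cite[Theorem 4.2]{bgy} (with the Modifications), obtained by running the rank conditions through Propositions \ref{belongsingleBruhat} and \ref{belongsingleBruhat2} and the Laplace-expansion equivalences recorded in Observations \ref{translateconditions1234}, which is exactly the assembly you describe. The only work beyond citation is the bookkeeping with $\wom$, $\woN$, transposes and the block decomposition \eqref{blockform1}, which you correctly identify and which the Observations already carry out.
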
 

Geometrically, Theorem \ref{theorem-4.2reformulation} only shows that the minors in $\mc(w)$ cut out the subvariety $\overline{\mathcal{P}_{\woN w}}$, i.e., the ideal of polynomial functions vanishing on $\overline{\mathcal{P}_{\woN w}}$ is the smallest radical ideal of $\pmmpc$ containing $\mc(w)$. In fact, taking account of recent work of Yakimov \cite[Theorem 5.3]{Yak}, this ideal is generated by $\mc(w)$.

Our aim is to show that the minors that vanish on
$\overline{\mathcal{P}_{\woN w}}$ are precisely the minors that are in
$\mcw$. Given the above result, what remains to be proved is that 
if $[I | \Lambda] \notin \mc(w)$ then there exists a
matrix $x \in \overline{P_{\woN w}}$ such that $[I | \Lambda](x) \neq 0$.

We start with a preliminary result.

\begin{theo}\label{theorem-preliminary} 
Let $w\in \mathcal{S}$, and suppose that 
$$[U|V]:=[u_1 < \dots < u_{k+1}\mid v_1 <
\dots < v_{k+1}]$$ 
is a minor in $\pmmpc$ which
is not in $\mc(w)$. 
Let 
$$[U'|V'] = [u_{1} < \dots < \widehat{u_{\alpha}} < \dots < u_{k+1}
\mid v_{1} < \dots < \widehat{v_{\alpha}} < \dots < v_{k+1}],$$
 for some 
$\alpha \in\gc 1,\, k+1\dc$. Then 
$[U'|V']\notin \mc(w)$. 
\end{theo}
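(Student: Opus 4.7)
The plan is to argue by contrapositive, handling each of the four conditions of Definition~\ref{definition-M(w)} separately. I assume none of Conditions~1--4 hold for $[U|V]$ and show that none holds for $[U'|V']$.

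Conditions~3 and~4 are immediate because they are monotone. Condition~3 depends on $V$ only through the cardinalities $|V\cap\gc r,s\dc|$, and since $V'\subseteq V$ these cardinalities can only decrease; thus the defining inequality continues to fail. The same argument handles Condition~4 using $U'\subseteq U$.

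Conditions~1 and~2 are mirror images of each other (Condition~2 reduces to Condition~1 by transposition, swapping $m\leftrightarrow p$ and $I\leftrightarrow\Lambda$), so I concentrate on Condition~1. Its failure for $[U|V]$ supplies an $L=\{l_1<\cdots<l_{k+1}\}\subseteq\onep\cap w^{-1}\onem$ with $L\leq V$ and $U\leq\wom w(L)$. Write $\wom w(L)=\{m_1<\cdots<m_{k+1}\}$ in ascending order and let $\sigma\in S_{k+1}$ be the permutation defined by $\wom w(l_i)=m_{\sigma(i)}$. I propose $L':=L\setminus\{l_\beta\}$ as the witness for $[U'|V']$ and compare indices position by position.

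A direct case analysis shows that $L'\leq V'$ holds exactly when $\beta\geq\alpha$, while $U'\leq\wom w(L')$ holds exactly when $\sigma(\beta)\leq\alpha$ (since removing $l_\beta$ from $L$ deletes $m_{\sigma(\beta)}$, i.e.\ the element in position $\sigma(\beta)$ of the ascending listing of $\wom w(L)$). The only nontrivial point is to produce a single $\beta$ satisfying both constraints, and this is where pigeonhole enters: the set $\{\beta:\beta\geq\alpha\}$ has $k+2-\alpha$ elements, the set $\sigma^{-1}\{1,\ldots,\alpha\}$ has $\alpha$ elements, and their union lies in $\{1,\ldots,k+1\}$, so their intersection is nonempty. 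Membership $L'\subseteq\onep\cap w^{-1}\onem$ is automatic from $L'\subseteq L$, so $L'$ is the desired witness. Condition~2 follows by the stated symmetry, completing the argument. The main (mild) obstacle is the bookkeeping in the position-by-position index comparison; everything else is either monotone or immediate from counting.
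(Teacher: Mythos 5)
Your argument is correct and takes essentially the same route as the paper: Conditions 3 and 4 are handled by monotonicity of the cardinalities, Condition 2 by the row/column symmetry, and Condition 1 by deleting a single well-chosen element $l_\beta$ from the witness set $L$ and comparing indices position by position. The only cosmetic difference is that you produce $\beta$ (with $\beta\ge\alpha$ and $\sigma(\beta)\le\alpha$) by a pigeonhole count, whereas the paper takes the index $\gamma\ge\alpha$ with $\wom w(l_\gamma)$ minimal, which satisfies the same two constraints for the same counting reason.
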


\begin{proof}
As $[U | V] \notin \mc(w)$, each of the four conditions in
Definition~\ref{definition-M(w)} is not satisfied by $[U | V]$. It is
necessary to show that each of these four conditions also fails for 
$[U'| V']$.\\

\noindent
$\bullet$ As Condition 1 of Definition~\ref{definition-M(w)} fails for $[U
| V]$, there exists 
$$L=\{l_1 < \dots < l_{k+1}\} \subseteq \onep \cap
w^{-1}\onem$$
such that $L \leq V$ and $U \leq \wom w (L):=\{m_1 <
\dots < m_{k+1} \}$. Let $\gamma \geq \alpha$ such that $ \wom w(l_{\gamma})$
is minimal. Then we set 
$$
L':= \{ l_1 < \dots < \widehat{l_{\gamma}} < \dots <
l_{k+1}\}.
$$ 
We show that $L'$ demonstrates the failure of Condition 1 for
$[U'| V']$.

Note first that $L' \subset L \subseteq \onep \cap
w^{-1}\onem$.
As $L \leq V$ and $\gamma \geq \alpha$, we also have $L' \leq V'$.

Let $\delta$ be such that $m_\delta = \wom w(l_{\gamma})$. Then 
\begin{align*}
\wom w (L') &=\{m'_1<m'_2\dots<m'_{\delta-1} <m'_{\delta}<\dots<m'_{k}\} \notag\\
 &=\{m_1<m_2\dots<\widehat{m_\delta} <\dots<m_{k+1}\}. 
\end{align*}
So, $m'_i = m_i$ for $i\leq \delta -1$ and 
$m'_i = m_{i+1}$ for $i\geq\delta$.
Note that, as $m_\delta$ is the least element in the set  
$\{\wom w (l_i)\mid i\geq \alpha\}$, we must have 
$\delta\leq\alpha$.

Now write
\[
U' := 
\{u'_{1} < \dots < u'_{\alpha-1} <u'_{\alpha} < \dots <
u'_{k}\} = 
\{u_{1} < \dots < \widehat{u_{\alpha}} < \dots <
u_{k+1}\}.
\]
So, 
$u'_i = u_i$ for $i\leq \alpha -1$ and 
$u'_i = u_{i+1}$ for $i\geq\alpha$. Consequently,
\begin{enumerate}
\item[(i)] when $i\leq\delta -1$, we also know that $i\leq\alpha-1$, and so
$u'_i = u_i\leq m_i=m'_i$;
\item[(ii)] when $\delta\leq i\leq\alpha-1$, we have $u'_i = u_i\leq m_i<m_{i+1} 
=m'_i$;
\item[(iii)] finally, when $i\geq\alpha$, note that $i\geq\delta$ and so
$u'_i = u_{i+1}\leq m_{i+1} =m'_i$. 
\end{enumerate}
Thus, $u'_i\leq m'_i$ for all $i=1,\dots,k$ and so $U'\leq 
\wom w (L')$, as required. 

This establishes that Condition 1 of Definition~\ref{definition-M(w)} fails
for $[U'| V']$. \\

\noindent
$\bullet$ Similar arguments show that 
Condition 2 of Definition~\ref{definition-M(w)} fails
for $[U'| V']$. \\

\noindent
$\bullet$ Assume that Condition 3 of Definition~\ref{definition-M(w)} holds for
$[U'| V']$. Then  there exist $1\leq r \leq s\leq p$ such that
 $$|V'\cap \gc r, s \dc| >| \gc r, s \dc \setminus
w^{-1} \gc m+r , \, m+s\dc |. $$
Since $V' \subseteq V$, this contradicts the fact that Condition 3
fails for $[U| V]$. Thus, Condition 3 fails for $[U'|V']$.\\

\noindent
$\bullet$ Similar arguments show that 
Condition 4 of Definition~\ref{definition-M(w)} fails
for $[U'| V']$. \\

Thus, all four conditions in Definition~\ref{definition-M(w)} fail for 
$[U'| V']$, and therefore $[U'| V']\not\in\mcw$. 
\end{proof}

\begin{theo}
\label{theo:descriptionclosure}
Let $w \in \mathcal{S}$, and let $I\subseteq \onem$ and $\Lambda\subseteq
\onep$ with $|I|=|\Lambda|$. Then $[I | \Lambda ] \in \mc(w)$ if
and only if
$[I | \Lambda ](x)=0$ for all $x \in \overline{\mathcal{P}_{\woN w}}$.
\end{theo}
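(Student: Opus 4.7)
The forward direction is immediate from Theorem \ref{theorem-4.2reformulation}: any $[I|\Lambda]\in\mcw$ vanishes on $\overline{\mathcal{P}_{\woN w}}$ by construction. The content of the theorem is the converse, and my plan is to construct, for each $[I|\Lambda]\notin\mcw$ of size $k$, an explicit rook-placement matrix $P\in\overline{\mathcal{P}_{\woN w}}$ of the form $P_{i_s,\lambda_{\sigma(s)}}=1$ for some bijection $\sigma\in S_k$ (all other entries zero). Such a $P$ satisfies $[I|\Lambda](P)=\pm 1\neq 0$ tautologically, yielding the desired witness.

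An elementary computation, using the fact that $w\in\mathcal{S}$ forces $w(j)\le j+m$ for all $j\in\onep$, shows that rank Conditions 3 and 4 from Theorem \ref{theorem-4.2reformulation} applied to the rook matrix $P$ reduce precisely to the inequalities obtained by negating Conditions 3 and 4 of Definition \ref{definition-M(w)}. These conditions are automatic for any choice of $\sigma$, since $|\Lambda\cap\gc r,s\dc|$ and $|I\cap\gc r,s\dc|$ depend only on $I$ and $\Lambda$ and not on $\sigma$. What remains is therefore to choose $\sigma$ so that rank Conditions 1 and 2 also hold for $P$.

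By Propositions \ref{belongsingleBruhat} and \ref{belongsingleBruhat2}, rank Conditions 1 and 2 can be restated as the vanishing of certain families of minors on Bruhat-cell closures. On the combinatorial side, the failure of Condition 1 of Definition \ref{definition-M(w)} for $[I|\Lambda]$ supplies a subset $L\subseteq\onep\cap w^{-1}\onem$ with $|L|=k$, $L\le\Lambda$, and $I\le\wom w(L)$, while the failure of Condition 2 supplies a symmetric subset on the other side. The crux of the proof is to assemble these index sets into a single permutation $\sigma$ whose rook positions $(i_s,\lambda_{\sigma(s)})$ are simultaneously compatible with both constraints, and this will be the main obstacle. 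Here I would argue by induction on $k$, using Theorem \ref{theorem-preliminary} as an inductive consistency check: the fact that every diagonal sub-minor $[I\setminus\{i_\alpha\}\mid\Lambda\setminus\{\lambda_\alpha\}]$ is again outside $\mcw$ means one can remove any matched pair $(i_\alpha,\lambda_\alpha)$ without destroying the matching problem, so a greedy or Hall-type construction should assemble $\sigma$ one matched pair at a time. The hardest technical point is translating the abstract non-membership of $[I|\Lambda]$ in $\mcw$ into enough flexibility to make this matching argument succeed at each stage.
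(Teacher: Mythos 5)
Your overall strategy (exhibit a rook-placement witness supported on $I\times\Lambda$ and invoke Theorem \ref{theorem-4.2reformulation}) is the same as the paper's, and your observation that Conditions 3 and 4 hold automatically for any matching is correct. But your proof stalls exactly at the point you yourself flag as the crux: you never construct the permutation $\sigma$, you only assert that "a greedy or Hall-type construction should assemble $\sigma$ one matched pair at a time." Nothing in the proposal shows that the index set $L$ witnessing the failure of Condition 1 and the symmetric set witnessing the failure of Condition 2 can be made compatible with a single matching, nor that the greedy step can always be continued; Theorem \ref{theorem-preliminary} as stated only concerns deletion of a row index and the column index in the \emph{same position} of the ordered lists, so it does not obviously provide the "flexibility" your matching argument needs for an arbitrary $\sigma$. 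As written, the converse direction is therefore not proved.

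The gap disappears once you notice that no clever $\sigma$ is needed: the paper simply takes the identity matching, i.e.\ $x_{i_k,j_k}=1$ for $I=\{i_1<\dots<i_t\}$, $\Lambda=\{j_1<\dots<j_t\}$, and does \emph{not} verify rank Conditions 1 and 2 for $x$ directly. Instead, it checks membership $x\in\overline{\mathcal{P}_{\woN w}}$ via the minor criterion of Theorem \ref{theorem-4.2reformulation}: it suffices that every minor in $\mcw$ vanish on $x$, equivalently that every nonvanishing minor of $x$ lie outside $\mcw$. For this particular rook matrix the nonvanishing minors are exactly the "diagonal" subminors $[i_{a_1},\dots,i_{a_k}\mid j_{a_1},\dots,j_{a_k}]$ with $a_1<\dots<a_k$, i.e.\ those obtained from $[I|\Lambda]$ by deleting matched pairs in the sense of Theorem \ref{theorem-preliminary}; iterating that theorem (which is precisely where all four conditions of Definition \ref{definition-M(w)} are handled) shows each of them is outside $\mcw$, and the proof is complete. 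So the role of Theorem \ref{theorem-preliminary} is not an "inductive consistency check" for a matching algorithm but the whole engine of the argument, and the matching problem you identify as the hardest technical point never arises.
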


\begin{proof}
If $[I | \Lambda ] \in \mc(w)$, then 
$[I | \Lambda ](x)=0$ for all $x \in \overline{\mathcal{P}_{\woN w}}$ by 
Theorem~\ref{theorem-4.2reformulation}. 

Next, suppose that $[I | \Lambda ] \notin \mc(w)$. We
show that there is a matrix $x \in \overline{P_{\woN w}}$ such that $[I|
\Lambda](x) \neq 0$.
Suppose that $I=\{i_1 < \dots < i_t\} $ and 
$\Lambda=\{j_1 < \dots < j_t\}$. Let $x$
be the matrix whose entries are defined by: $x_{i_k,j_k}=1$ for all
$k=1,\dots,t$ and $x_{k,l}=0$ otherwise. Then clearly 
$[I | \Lambda](x)=1\neq 0$;
so it is enough to show that $x\in\overline{P_{\woN w}}$. 

By Theorem~\ref{theorem-4.2reformulation}, 
it is enough to prove that $[A |B](x) = 0$ 
for all $[A|B] \in \mc(w)$. In fact, we will prove that
if $[A | B](x) \neq 0$, then $[A|B] \notin \mc(w)$. 

Suppose that $[A| B]$ is a minor such that 
$[A | B](x) \neq 0$. 
First, observe that because $[A | B](x) \neq 0$, we must have $[A| B]= 
[i_{a_1} < \dots < i_{a_k} \mid j_{a_1} <\dots < j_{a_k}]$
for some $a_1, \dots,a_k \in \gc 1,t \dc$.
By applying Theorem~\ref{theorem-preliminary} repeatedly,
starting with $[I| \Lambda] \notin \mcw$, we conclude that $[A| B] \notin
\mcw$.  
\end{proof}

\begin{cor}
\label{Mwcontain}
Let $w,z\in \mathcal{S}$. Then $\mc(w) \subseteq \mc(z)$ if and
only if $w\le z$.
\end{cor}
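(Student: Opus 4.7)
The plan is to reduce the corollary to a geometric statement about closures of the $\hc$-orbits of symplectic leaves, and then translate that statement to the Bruhat order via the isomorphism of posets coming from \cite{bgy}.

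First, I would combine Theorem~\ref{theo:descriptionclosure} with Theorem~\ref{theorem-4.2reformulation} to establish the equivalence
\[
\mc(w)\subseteq \mc(z) \iff \overline{\mathcal{P}_{\woN z}}\subseteq \overline{\mathcal{P}_{\woN w}}.
\]
For the forward direction, if $\mc(w)\subseteq \mc(z)$ and $x\in \overline{\mathcal{P}_{\woN z}}$, then every minor in $\mc(w)$ lies in $\mc(z)$ and therefore vanishes on $x$ by Theorem~\ref{theo:descriptionclosure}; hence $x\in \overline{\mathcal{P}_{\woN w}}$ by Theorem~\ref{theorem-4.2reformulation}. For the reverse, a minor $[I|\Lambda]\in \mc(w)$ vanishes on $\overline{\mathcal{P}_{\woN w}}$, hence on $\overline{\mathcal{P}_{\woN z}}$ by the assumed inclusion, so $[I|\Lambda]\in \mc(z)$ by Theorem~\ref{theo:descriptionclosure} applied to $z$.

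Next, I would invoke the Brown--Goodearl--Yakimov theorem cited in the excerpt (parts (1)--(2) of their Theorem~3.9/3.13), which states that $y\mapsto \overline{\mathcal{P}_y}$ gives a poset isomorphism between $S_N^{\geq (\wop,\wom)}$ equipped with the Bruhat order and the set of $\hc$-orbit closures ordered by inclusion. Since $\woN w,\woN z \in S_N^{\geq (\wop,\wom)}$ (by \eqref{woNS}), this gives
\[
\overline{\mathcal{P}_{\woN z}}\subseteq \overline{\mathcal{P}_{\woN w}} \iff \woN z \leq \woN w \text{\ in Bruhat order on\ } S_N.
\]
Finally, I would use the standard fact that left multiplication by the longest element $\woN$ reverses the Bruhat order on $S_N$, so $\woN z\leq \woN w$ if and only if $z\geq w$, i.e. $w\leq z$. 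Chaining the three equivalences yields the corollary.

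The only subtle point, which I would verify carefully, is the direction of the poset isomorphism in Theorem~3.13 of \cite{bgy}: one must check that their convention is that closure-inclusion corresponds to the standard Bruhat order (smaller element in the closure of larger), and then account correctly for the order reversal induced by left multiplication by $\woN$. Once those sign conventions are pinned down, the proof is a short two- or three-line chain of ``if and only if''s with no further computation required.
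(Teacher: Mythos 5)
Your proposal is correct and follows essentially the same route as the paper: both directions are obtained by translating $\mc(w)\subseteq\mc(z)$ into the closure inclusion $\overline{\mathcal{P}_{\woN z}}\subseteq\overline{\mathcal{P}_{\woN w}}$ via Theorems \ref{theorem-4.2reformulation} and \ref{theo:descriptionclosure}, and then into the Bruhat comparison $\woN z\le\woN w$, i.e.\ $w\le z$, via \cite[Theorem 3.13]{bgy}. The direction-of-order point you flag is resolved exactly as you expect, and the paper's proof is the same short chain of equivalences.
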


\begin{proof} If $w\le z$, then $\woN z \le \woN w$, and so $\overline{P_{\woN z}}
\subseteq \overline{P_{\woN w}}$ by \cite[Theorem 3.13]{bgy}. In view of Theorem
\ref{theo:descriptionclosure}, it follows that all the minors $[I|\Lambda] \in
\mc(w)$ vanish on $\overline{P_{\woN z}}$, and thus that  $\mc(w)
\subseteq
\mc(z)$.

Conversely, assume that $\mc(w) \subseteq \mc(z)$. By Theorem
\ref{theo:descriptionclosure}, all the minors in $\mc(w)$ vanish on
$\overline{P_{\woN z}}$, and thus Theorem \ref{theorem-4.2reformulation} implies
that $\overline{P_{\woN z}}
\subseteq \overline{P_{\woN w}}$. Now $\woN z \le \woN w$ by \cite[Theorem
3.13]{bgy}, and therefore $w\le z$.
\end{proof}

%%%%%%%%%
\subsection{On Poisson $\hc$-prime ideals of $\pmmpc$.}
\label{section:PoissonHprimes}

To conclude Section \ref{sectionPoisson}, let us mention that the symplectic leaves in
$\mc_{m,p}(\bbC)$ are algebraic \cite[Theorem 0.4]{bgy}, that is, they are locally closed subvarieties of $\mc_{m,p}(\bbC)$. As a consequence,
\cite[Proposition 4.8]{Good2006} applies in this situation: there are
only finitely many Poisson $\hc$-primes in $\pmmpc$, and they are the ideals
$$J_w := \{ f\in \pmmpc \mid f=0 \text{\ on\ } \ov{\mathcal{P}_{\woN w}}\, \}$$
where $w\in \mathcal{S}$, that is, where $\mathcal{P}_{\woN w}$ runs through the
$\hc$-orbits of symplectic leaves in $\pmmpc$. In particular, the set of Poisson
$\hc$-primes in $\pmmpc$ is in bijection with $\mathcal{S}$. As a consequence,
thanks to
\cite[Corollary 1.5]{lau2}, the number of Poisson $\hc$-primes in $\pmmpc$ is the
same as the number of $m \times p$ Cauchon diagrams. 

In a recent and independent preprint, Yakimov proves that $J_w$ is generated by minors \cite[Theorem 5.3]{Yak}. He also gives an explicit description of all the elements of $J_w$, as matrix coefficients of Demazure modules \cite[Theorem 4.6]{Yak}. (These results are obtained from much more general ones, concerning the ideals of the closures of torus orbits of symplectic leaves in Schubert cells of arbitrary flag varieties.) For our purposes here, we do not need to know a generating set for $J_w$. On the other hand, we must pin down the complete set of all minors contained in $J_w$, a set that is generally much larger than the set of generators given in \cite{Yak}. The required result, which we deduce from the previous
discussion and Theorem \ref{theo:descriptionclosure}, is the following. 

\begin{theo}
\label{theo:PoissonMinors}
Let $w \in \mathcal{S}$. Then $J_w$ is the unique Poisson $\hc$-prime ideal
of $\pmmpc$ such that the set of minors that belong to $J_w$ is exactly
$\mc(w)$.
\end{theo}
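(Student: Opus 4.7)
The plan is to prove this in two steps, relying on the machinery already assembled in this section: the classification of Poisson $\hc$-primes via $\mathcal{S}$, the explicit description of $\ov{\mathcal{P}_{\woN w}}$ by the vanishing of $\mc(w)$ (Theorem \ref{theorem-4.2reformulation}), and the converse that no other minor vanishes on $\ov{\mathcal{P}_{\woN w}}$ (Theorem \ref{theo:descriptionclosure}).

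First I would verify the existence claim, namely that the set of minors contained in $J_w$ is exactly $\mc(w)$. This is essentially a restatement of Theorem \ref{theo:descriptionclosure}: by definition $J_w$ consists of the polynomial functions vanishing on $\ov{\mathcal{P}_{\woN w}}$, so a minor $[I|\Lambda]$ lies in $J_w$ if and only if it vanishes on $\ov{\mathcal{P}_{\woN w}}$, which by Theorem \ref{theo:descriptionclosure} happens if and only if $[I|\Lambda]\in\mc(w)$. The ideal $J_w$ is a Poisson $\hc$-prime because it is the vanishing ideal of the closure of an $\hc$-orbit of symplectic leaves (as recalled at the start of Section \ref{section:PoissonHprimes}).

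For uniqueness, suppose $P$ is any Poisson $\hc$-prime of $\pmmpc$ such that the set of minors in $P$ equals $\mc(w)$. The discussion in Section \ref{section:PoissonHprimes} (based on \cite[Proposition 4.8]{Good2006} and the algebraicity of symplectic leaves) tells us that every Poisson $\hc$-prime of $\pmmpc$ is of the form $J_v$ for a unique $v\in\mathcal{S}$. Hence $P=J_v$, and the first part applied to $v$ gives that the minors in $P$ form exactly the set $\mc(v)$. Combining with our assumption, $\mc(v)=\mc(w)$. Now invoke Corollary \ref{Mwcontain}: the inclusion $\mc(w)\subseteq\mc(v)$ forces $w\le v$, and $\mc(v)\subseteq\mc(w)$ forces $v\le w$. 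Thus $v=w$ and $P=J_w$, establishing uniqueness.

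There is no serious obstacle here; the theorem is really a packaging of results already proved. The one subtlety worth flagging is conceptual rather than technical: the statement is about the full collection of minors lying inside the Poisson prime $J_w$, which in general is much larger than any generating set of $J_w$ (as emphasized in the paragraph citing \cite[Theorem 5.3]{Yak}). What makes the characterization go through is that Theorem \ref{theo:descriptionclosure} gives both directions of the vanishing correspondence, so the map $w\mapsto\{\text{minors in }J_w\}=\mc(w)$ is injective on $\mathcal{S}$ by Corollary \ref{Mwcontain}, and this injectivity combined with the bijection $\mathcal{S}\leftrightarrow\hc\text{-}\mathrm{PSpec}(\pmmpc)$ delivers the uniqueness.
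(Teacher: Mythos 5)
Your proof is correct and follows essentially the same route as the paper: the identification of the minors in $J_w$ with $\mc(w)$ comes from the definition of $J_w$ as the vanishing ideal of $\ov{\mathcal{P}_{\woN w}}$ together with Theorem \ref{theo:descriptionclosure}, and uniqueness comes from the classification of Poisson $\hc$-primes as the ideals $J_v$, $v\in\mathcal{S}$, combined with the injectivity $\mc(v)\ne\mc(w)$ for $v\ne w$ given by Corollary \ref{Mwcontain}. Nothing further is needed.
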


\begin{proof} By definition of $J_w$, the minors that belong to $J_w$ are exactly
those that vanish on the closure of $\mathcal{P}_{\woN w}$. Hence, the statement about minors
follows from Theorem \ref{theo:descriptionclosure}. We then obtain the uniqueness of $J_w$ from Corollary \ref{Mwcontain}, since $\mc(z)\ne \mc(w)$ for all $z\in \mathcal{S}$ with $z\ne w$.
\end{proof}

In the next section, we will describe the restoration algorithm that will
allow us to:
\begin{enumerate}
\item Describe a new algorithm that constructs totally nonnegative matrices from matrices with nonnegative coefficients.
\item Construct an explicit bijection between Cauchon diagrams and Poisson $\hc$-primes.
\item Prove that a family of minors is admissible if and only if it is the set of
minors that vanish on the closure of some $\hc$-orbit of symplectic leaves.
\end{enumerate} 
As a consequence of point 3 and Theorem \ref{theo:descriptionclosure}, we will
obtain an explicit description of the admissible families defining nonempty
tnn cells.\\

%%%%%%%%%%%%%%%%%%%%%%%%%%%%%%%%%%%%%%%%%%%%%%%%%%%%%%%%%%%%%%%%%%%%%%%%%%%%%%%%%%%%%%%%%%%

\section{The restoration algorithm.}

In this section, $K$ denotes a field of characteristic zero and, except where
otherwise stated, all the matrices considered have their entries in $K$.

%%%%%%%%%
\subsection{Description and origin of the restoration algorithm.}

The deleting derivations algorithm was introduced by Cauchon in
\cite{c2} in order to study the prime spectrum of the algebra of quantum
matrices. 
The restoration algorithm, which is the inverse of the deleting derivations
algorithm, was then introduced in \cite{lauPEMS} in
this framework. However, in this paper,  we  restrict ourselves to the
commutative setting.

In order to define the restoration algorithm, we will need the following 
convention.

\begin{nota}$ $
\begin{itemize}

\item The lexicographic ordering on $\bbN^2$ is denoted by $\leq$. 
Recall that:
\[
(\ia) \leq (j,\beta) \quad\Longleftrightarrow\quad
(i < j) \mbox{ or } (i=j \mbox{ and } \alpha \leq \beta ).
\]

\item Set $E:=\left(\gc 1,m \dc \times \gc 1,p \dc \cup \{(m,p+1)\} \right)
\setminus \{(1,1)\}$.

\item Set $\Ecirc= \left(\gc 1,m \dc \times \gc 1,p \dc \right)
\setminus \{(1,1)\}$.

\item Let $(j,\beta) \in \Ecirc$. 
Then $(j,\beta)^{+}$ denotes the smallest element 
(relative to $\leq$) of the set 
$\left\{ (\ia) \in E \ | \ (j,\beta) < (\ia) \right\}$.

\end{itemize}
 \end{nota}

\begin{conv}[Restoration algorithm]
\label{conv1}$ $\\
Let $X=(x_{\ia}) \in \MmpK$. 
As $r$ runs over the set $E$, we define matrices 
$X^{(r)} :=(x_{\ia}^{(r)}) \in \MmpK$ 
as follows:
\begin{enumerate}

\item \underline{When $r=(1,2)$}, we set $X^{(1,2)}=X$, that is, $x_{\ia}^{(1,2)}:=x_{\ia}$ for all $(\ia) \in \gc 1,m \dc \times
\gc 1,p \dc$. 

\item \underline{Assume that $r=(j,\beta) \in \Ecirc$} and
that the matrix $X^{(r)}=(x_{\ia}^{(r)})$ is already known. The entries
$x_{\ia}^{(r^+)}$ of the matrix $X^{(r^+)}$ are defined as follows:

\begin{enumerate}

\item If $x_{j,\beta}^{(r)}=0$, then $x_{\ia}^{(r^+)}=x_{\ia}^{(r)}$ 
for all $(\ia) \in \gc 1,m \dc \times \gc 1,p \dc$.

\item If $x_{j,\beta}^{(r)}\neq 0$ and 
$(\ia) \in \gc 1,m \dc \times \gc 1,p
\dc$, then \\
$x_{\ia}^{(r^+)}= \left\{ \begin{array}{ll}
x_{\ia}^{(r)}+x_{i,\beta}^{(r)} \left( x_{j,\beta}^{(r)}\right)^{-1}
x_{j,\alpha}^{(r)} & \qquad \mbox{if }i <j \mbox{ and } \alpha < \beta \\
x_{\ia}^{(r)} & \qquad \mbox{otherwise.} \end{array} \right.$ 

\end{enumerate}

We say that \textbf{$X^{(r)}$ is the matrix obtained from $X$ by applying the
restoration algorithm at step $r$}, and $x_{j,\beta}^{(r)}$ is called the 
{\bf pivot
at step $r$}. 

\item Set $\overline{X}:= X^{(m,p+1)}$; this is the matrix
obtained from $X$ at the end of the restoration algorithm.
\end{enumerate}
\end{conv}

\begin{example}
If $X= \begin{bmatrix} 0 & 1 \\ 2 & 3 \end{bmatrix}$, then
$\overline{X}= \begin{bmatrix} 2/3 & 1 \\ 2 & 3 \end{bmatrix}$. 
Observe that in this example, $X$ is not totally nonnegative, while
$\overline{X}$ is. This observation will be generalised in due course.
\end{example}

Observe also that the construction in Convention \ref{conv1} is closely related to minors. Indeed, if
$i <j$, $\alpha < \beta$ and 
$x_{j,\beta}^{(r)}\neq 0$ then  

\begin{eqnarray}
\label{eq:restorationMinor}
x_{\ia}^{(r)} 
= x_{\ia}^{(r^+)}-x_{i,\beta}^{(r^+)} \left(
x_{j,\beta}^{(r^+)}\right)^{-1} x_{j,\alpha}^{(r^+)} 
= \det \left(
\begin{array}{ll} x_{\ia}^{(r^+)} & x_{i,\beta}^{(r^+)} \\[1ex]
x_{j,\alpha}^{(r^+)} & x_{j,\beta}^{(r^+)} 
\end{array} \right) \times\left(
x_{j,\beta}^{(r^+)}\right)^{-1}. 
\end{eqnarray}

More generally, the formulae of Convention \ref{conv1} allow us 
to express the entries of $X^{(r)}$ in terms of those of $X^{(r^+)}$, as follows. These expressions also describe the deleting derivations algorithm (see Convention \ref{conv2}).

\begin{obs}
\label{deleting}
Let $X=(x_{\ia}) \in \MmpK$, and
let 
$r=(j,\beta) \in \Ecirc$. 
\begin{enumerate}

\item If $x_{j,\beta}^{(r^+)}=0$, then $x_{\ia}^{(r)}=x_{\ia}^{(r^+)}$ 
for all $(\ia) \in \gc 1,m \dc \times \gc 1,p \dc$.

\item If $x_{j,\beta}^{(r^+)}\neq 0$ and 
$(\ia) \in \gc 1,m \dc \times \gc 1,p \dc$, then 
$$x_{\ia}^{(r)}= 
\left\{ \begin{array}{ll}
x_{\ia}^{(r^+)}-x_{i,\beta}^{(r^+)} 
\left( x_{j,\beta}^{(r^+)}\right)^{-1} 
x_{j,\alpha}^{(r^+)}
& \qquad \mbox{if }i <j \mbox{ and } \alpha < \beta \\[1ex] 
x_{\ia}^{(r^+)} & \qquad \mbox{otherwise.} 
\end{array} \right.$$
\end{enumerate}
\end{obs}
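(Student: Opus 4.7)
The plan is to deduce this observation directly from Convention \ref{conv1}(2), by using the key remark that the pivot entry itself is unchanged in passing from $X^{(r)}$ to $X^{(r^+)}$, so that the transformation there can be inverted entry-by-entry.

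First I would observe that the only entries of $X^{(r)}$ which are modified when forming $X^{(r^+)}$ are those $x_{\ia}^{(r)}$ with $i<j$ \emph{and} $\alpha<\beta$. In particular, the pivot position $(j,\beta)$ is never itself updated, nor are the entries in row $j$ or column $\beta$. Hence $x_{j,\beta}^{(r^+)}=x_{j,\beta}^{(r)}$, and more generally $x_{i,\beta}^{(r^+)}=x_{i,\beta}^{(r)}$ and $x_{j,\alpha}^{(r^+)}=x_{j,\alpha}^{(r)}$ for all $i,\alpha$. This is the single fact that makes the inversion possible.

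From this remark, case (1) is immediate: $x_{j,\beta}^{(r^+)}=0$ is equivalent to $x_{j,\beta}^{(r)}=0$, in which case Convention \ref{conv1}(2)(a) gives $x_{\ia}^{(r^+)}=x_{\ia}^{(r)}$ for all $(\ia)$, i.e.\ $x_{\ia}^{(r)}=x_{\ia}^{(r^+)}$.

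For case (2), suppose $x_{j,\beta}^{(r^+)}\ne 0$, equivalently $x_{j,\beta}^{(r)}\ne 0$, so that Convention \ref{conv1}(2)(b) applies. For $(\ia)$ with $i<j$ and $\alpha<\beta$, that convention says
\[
x_{\ia}^{(r^+)} \;=\; x_{\ia}^{(r)}+x_{i,\beta}^{(r)}\,\bigl(x_{j,\beta}^{(r)}\bigr)^{-1} x_{j,\alpha}^{(r)}.
\]
Solving for $x_{\ia}^{(r)}$ and then replacing $x_{i,\beta}^{(r)}$, $x_{j,\beta}^{(r)}$ and $x_{j,\alpha}^{(r)}$ by $x_{i,\beta}^{(r^+)}$, $x_{j,\beta}^{(r^+)}$ and $x_{j,\alpha}^{(r^+)}$ (justified by the observation above) yields the claimed formula. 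For $(\ia)$ with $i\ge j$ or $\alpha\ge\beta$, Convention \ref{conv1}(2)(b) already says $x_{\ia}^{(r^+)}=x_{\ia}^{(r)}$, giving the ``otherwise'' branch. There is no real obstacle here: the whole content of the observation is that the one-step restoration rule is invertible because the pivot row and column are preserved, and the statement is essentially a rewriting of Convention \ref{conv1}(2).
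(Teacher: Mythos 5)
Your proposal is correct and matches the paper's intent exactly: the paper offers no separate proof, presenting Observations \ref{deleting} as an immediate inversion of the formulae in Convention \ref{conv1}, which is precisely what you carry out. The key point you isolate --- that the restoration step never alters the pivot entry nor any entry in row $j$ or column $\beta$, so the update can be solved for $x_{\ia}^{(r)}$ and the $(r)$-entries on the right replaced by their $(r^+)$-counterparts --- is the same (tacit) justification the authors rely on.
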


The following proposition is easily obtained from the definition of the
restoration algorithm.

\begin{prop}$ $
\label{ObservationDeletingDerivation}
Let $X=(x_{\ia}) \in \MmpK$, and
$(j,\beta) \in E$.
\begin{enumerate}
\item $x_{\ia}^{(k,\gamma)}=x_{\ia}$ for all $(k,\gamma) \leq (i+1,1)$. In
particular, $x^{(\ia)}_{\ia}= x^{(\ia)^+}_{\ia}= x_{\ia}$ if $(\ia) \in \Ecirc$.

\item If $x_{i, \beta}=0$ for all $i \leq j$, then
$x_{i,\beta}^{(k,\gamma)}=0$ for all $i \leq j$ and $(k,\gamma) \leq
(j,\beta)^+$. 

\item If $x_{j, \alpha}=0$ for all $\alpha \leq \beta$, then
$x_{j,\alpha}^{(k,\gamma)}=0$ for all $\alpha \leq \beta$ and $(k,\gamma) \leq
(j,\beta)^+$. 

\item $x_{\ia}^{(j,\beta)} =x_{\ia} +Q^{(j,\beta)}_{\ia}$, where
$$Q^{(j,\beta)}_{\ia} \in \bbQ[ x_{k,\gamma}^{\pm1} \mid (\ia) < (k,\gamma) < (j,\beta),\ x_{k,\gamma} \ne 0 ].$$
\end{enumerate}
\end{prop}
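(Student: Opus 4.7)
The plan is to establish all four parts together by induction on the step $r$ in lexicographic order, invoking repeatedly the explicit update formulas of Convention~\ref{conv1}.

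For part~(1), the crucial observation is that at a step $r=(l,\delta)$ the $(\ia)$-entry is modified only when both $i<l$ and $\alpha<\delta$ hold. If $(l,\delta)\leq(i+1,1)$ in lex order, then either $l\leq i$ (so $i<l$ fails) or $l=i+1$ and $\delta=1$ (so $\alpha<\delta$ fails). Iterating from $X^{(1,2)}=X$ yields $x_{\ia}^{(k,\gamma)}=x_{\ia}$ for every $(k,\gamma)\leq(i+1,1)$. The ``in particular'' clause then follows because, for $(\ia)\in\Ecirc$, both $(\ia)$ and $(\ia)^+$ are $\leq(i+1,1)$.

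Parts~(2) and~(3) are transposes of each other, so I would prove only~(2), by induction on $(k,\gamma)\leq(j,\beta)^+$. At a step $r=(l,\delta)\leq(j,\beta)$ the entry at a position $(i,\beta)$ with $i\leq j$ is updated only when $i<l$ and $\beta<\delta$; in that event the lex condition $(l,\delta)\leq(j,\beta)$ forces $l<j$. The update
\[
x_{i,\beta}^{(r^+)}=x_{i,\beta}^{(r)}+x_{i,\delta}^{(r)}(x_{l,\delta}^{(r)})^{-1}x_{l,\beta}^{(r)}
\]
then vanishes by induction, since both $x_{i,\beta}^{(r)}$ and $x_{l,\beta}^{(r)}$ are already zero (the latter because $l<j$).

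For part~(4), I would induct on $(j,\beta)\in E$. The base case $(j,\beta)\leq(i+1,1)$ follows from part~(1) with $Q^{(j,\beta)}_{\ia}=0$. For the inductive step, write $(j,\beta)=r^+$ with $r=(j_r,\beta_r)$. If $x_{j_r,\beta_r}^{(r)}=0$ the formula for $x_{\ia}^{(r^+)}$ agrees with that for $x_{\ia}^{(r)}$ and the hypothesis transfers directly. Otherwise, part~(1) applied at $r$ identifies the pivot as $x_{j_r,\beta_r}^{(r)}=x_{j_r,\beta_r}$, and the update reads
\[
x_{\ia}^{(r^+)}=x_{\ia}^{(r)}+x_{i,\beta_r}^{(r)}(x_{j_r,\beta_r})^{-1}x_{j_r,\alpha}^{(r)}
\]
(provided $i<j_r$ and $\alpha<\beta_r$; otherwise $x_{\ia}^{(r^+)}=x_{\ia}^{(r)}$ and there is nothing more to do). The inductive hypothesis expresses each of $x_{\ia}^{(r)}$, $x_{i,\beta_r}^{(r)}$, $x_{j_r,\alpha}^{(r)}$ as the corresponding original entry plus a Laurent polynomial in variables indexed strictly between the respective starting point and $r$; since the positions $(i,\beta_r)$, $(j_r,\alpha)$, $(j_r,\beta_r)$ all lie strictly between $(\ia)$ and $r^+$ in lex order, assembling the product yields the required shape for $Q^{(r^+)}_{\ia}$. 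The main point to monitor here is bookkeeping: each variable appearing with a negative exponent is an original pivot, hence by part~(1) equal to the original entry $x_{k,\gamma}$, which is nonzero precisely when the update is triggered; thus the condition $x_{k,\gamma}\ne 0$ in the Laurent ring is automatic, and no deeper obstacle remains.
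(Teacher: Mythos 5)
Your proposal is correct and follows essentially the same route as the paper: parts (1)--(3) by direct induction on the step of the algorithm (which the paper simply declares easy), and part (4) by induction on $(j,\beta)$, splitting on whether the pivot vanishes and using part (1) to identify the pivot with the original entry $x_{j,\beta}$, which is exactly the paper's argument. The only slight looseness is calling (3) the ``transpose'' of (2) --- the row-major ordering of the steps is not transpose-invariant --- but the direct analogue of your induction for (3) is even easier (no entry of row $j$ can be modified at any step $(l,\delta)\leq(j,\beta)$, since such a modification would require $l>j$), so nothing essential is missing.
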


\begin{proof} Parts 
1, 2 and 3 are easily proved by induction on $(k,\gamma)$;
we concentrate on 4. 
The proof is by induction on $(j,\beta)$. 
The induction starts with  
$(j,\beta)=(1,2)$ where $x_{\ia}^{(1,2)} =x_{\ia}$ 
by construction, as desired. 

Assume now that $(j,\beta) \in \Ecirc$ and that $x_{\ia}^{(j,\beta)} =x_{\ia} +Q_1$, 
where 
$$Q_1 \in \bbQ[ x_{k,\gamma}^{\pm1} \mid (\ia) < (k,\gamma) < (j,\beta),\ x_{k,\gamma} \ne 0 ].$$

We distinguish between two cases.
\\
$\bullet$ If $x_{i,\alpha}^{(j,\beta)^+}=x_{i,\alpha}^{(j,\beta)}$, 
the induction hypothesis immediately implies that $x_{\ia}^{(j,\beta)^+} =x_{\ia}+Q$ for a suitable $Q$.
\\
$\bullet$ If $x_{i,\alpha}^{(j,\beta)^+} \neq x_{i,\alpha}^{(j,\beta)}$, 
then $x_{j,\beta}=x_{j,\beta}^{(j,\beta)}=x_{j,\beta}^{(j,\beta)^+}$ 
is nonzero, and 
\[
x_{i,\alpha}^{(j,\beta)^+}
=x_{i,\alpha}^{(j,\beta)}+x_{i,\beta}^{(j,\beta)}
x_{j,\beta}^{-1}\,x_{j,\alpha}^{(j,\beta)}.
\]
Hence, it follows from the induction hypothesis that
\[
x_{i,\alpha}^{(j,\beta)^+}=x_{i,\alpha}+Q_1+Q_2\,x_{j,\beta}^{-1}\,Q_3,
\]
where each $Q_l$ belongs to $\bbQ[ x_{k,\gamma}^{\pm1} \mid (\ia) < (k,\gamma) < (j,\beta),\ x_{k,\gamma} \ne 0 ]$. Thus,
$x_{i,\alpha}^{(j,\beta)^+}=x_{i,\alpha}+Q$, where
$$Q=Q_1+Q_2\,x_{j,\beta}^{-1}\,Q_3 \in \bbQ[ x_{k,\gamma}^{\pm1} \mid (\ia) < (k,\gamma) < (j,\beta)^+,\ x_{k,\gamma} \ne 0 ],$$ as desired. This concludes the
induction step.
\end{proof}

%%%%%%%%%
\subsection{The effect of the restoration algorithm on minors.}
\label{effect1}

Let $X$ be an $m \times p$ matrix. The aim of this section is to obtain a
characterisation of the minors of $X^{(j,\beta)^+}$ that are equal to zero in
terms of the minors of $X^{(j,\beta)}$ that are equal to zero.  

We start by introducing some notation for the minors.

\begin{nota}
\label{notadet}
Let $X=(x_{\ia})$ be a matrix in $\MmpK$, and 
$\delta= [I|\Lambda](X)$ 
a  minor of $X$. 
If $(j,\beta) \in E$, set
\[ 
\delta^{(j,\beta)}:= [I|\Lambda](X^{(j,\beta)}).
\]
For $i\in I$ and $\alpha\in \Lambda$, set 
\[
\delta_{\widehat{i},\widehat{\alpha}}^{(j,\beta)}
:= [I\setminus\{i\} \mid \Lambda\setminus\{\alpha\}](X^{(j,\beta)}),
\] 
while 
\[
\delta_{\alpha \rightarrow \gamma}^{(j,\beta)}
:= [I \mid \Lambda\cup \{\gamma\} \setminus\{\alpha\}](X^{(j,\beta)}) \quad (\gamma
\notin \Lambda)
\]
and 
\[
\delta_{i \rightarrow k}^{(j,\beta)}
:= [I\cup \{k\} \setminus\{i\} \mid \Lambda](X^{(j,\beta)}) \quad (k \notin I).
\]
\end{nota}

We start by studying minors involving the pivot.

\begin{prop} 
\label{Pivot1}
Let $X=(x_{i,\alpha}) \in \MmpK$ and 
 $(j,\beta) \in \Ecirc$. 
 Set $u:=x_{j,\beta}^{(j,\beta)^+} =x_{j,\beta}$ and assume that $u \neq 0$. 
Let 
$\delta= [i_1,\dots,i_l|\alpha_1,\dots,\alpha_l](X)$  be a minor of $X$ with
$(i_l,\alpha_l)  = (j,\beta)$.
Then $\delta^{(j,\beta)^+}=\delta_{\widehat{j},\widehat{\beta}}^{(j,\beta)} u$.
\end{prop}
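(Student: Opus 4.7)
The plan is a direct calculation using the explicit formulas from Convention~\ref{conv1}. Write $M$ for the $l \times l$ submatrix of $X^{(j,\beta)^+}$ indexed by $I := \{i_1, \dots, i_l\}$ and $\Lambda := \{\alpha_1, \dots, \alpha_l\}$, so that $\delta^{(j,\beta)^+} = \det M$. Because $(i_l, \alpha_l) = (j,\beta)$, we have $i_s < j$ and $\alpha_t < \beta$ for $s,t < l$, so the restoration formula gives
\[
M_{s,t} = x_{i_s, \alpha_t}^{(j,\beta)} + x_{i_s, \beta}^{(j,\beta)}\, u^{-1}\, x_{j, \alpha_t}^{(j,\beta)} \qquad (s,t < l),
\]
while for $s=l$ or $t=l$ the restoration does nothing (one of the strict inequalities $i<j$ or $\alpha<\beta$ fails), so $M_{s,l} = x_{i_s, \beta}^{(j,\beta)}$, $M_{l,t} = x_{j, \alpha_t}^{(j,\beta)}$, and $M_{l,l} = u$.

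Next I would perform, inside the matrix $M$, the column operations $C_t \mapsto C_t - u^{-1} x_{j,\alpha_t}^{(j,\beta)}\, C_l$ for $t = 1, \dots, l-1$. These operations preserve $\det M$. In row $s < l$ the new $(s,t)$ entry becomes $x_{i_s,\alpha_t}^{(j,\beta)}$ (the rank-one correction cancels exactly), and in row $l$ the new $(l,t)$ entry becomes $x_{j,\alpha_t}^{(j,\beta)} - u\cdot u^{-1} x_{j,\alpha_t}^{(j,\beta)} = 0$.

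Finally, Laplace expansion of the resulting matrix along its last row yields
\[
\det M = u \cdot \det\bigl( x_{i_s, \alpha_t}^{(j,\beta)} \bigr)_{1 \le s,t \le l-1} = u\, \delta_{\widehat{j}, \widehat{\beta}}^{(j,\beta)},
\]
which is the asserted identity. There is no real obstacle here; the only point requiring care is keeping track of which entries live in $X^{(j,\beta)}$ and which in $X^{(j,\beta)^+}$, and checking that the last row and last column of the submatrix are indeed untouched by the restoration step so that the pivot $u$ legitimately factors out.
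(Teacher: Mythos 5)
Your proof is correct. The entry-by-entry bookkeeping is right: for $s,t<l$ the restoration step gives $M_{s,t}=x_{i_s,\alpha_t}^{(j,\beta)}+x_{i_s,\beta}^{(j,\beta)}u^{-1}x_{j,\alpha_t}^{(j,\beta)}$, while the last row and last column are untouched because one of the strict inequalities $i<j$, $\alpha<\beta$ fails there, and your column operations $C_t\mapsto C_t-u^{-1}x_{j,\alpha_t}^{(j,\beta)}C_l$ do exactly cancel the rank-one correction in rows $s<l$ and annihilate the last row except for the pivot $u$, so the Laplace expansion along that row gives $\delta^{(j,\beta)^+}=u\,\delta_{\widehat{j},\widehat{\beta}}^{(j,\beta)}$. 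This is a genuinely different route from the paper's: there, one uses the inverse (deleting-derivations) formula \eqref{eq:restorationMinor} to write each entry of the smaller minor $\delta_{\widehat{j},\widehat{\beta}}^{(j,\beta)}$ as a $2\times2$ bordered minor of $X^{(j,\beta)^+}$ divided by $u$, and then invokes Sylvester's determinant identity to identify the determinant of that matrix of bordered minors with $\delta^{(j,\beta)^+}u^{l-2}$. Your argument is more elementary and self-contained (only column operations and one Laplace expansion, no appeal to Sylvester), and it works directly with the restoration formula of Convention \ref{conv1}; the paper's version buys brevity by citing a classical identity, at the cost of an external reference. Both are complete proofs of the statement.
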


\begin{proof} This is a consequence of Sylvester's identity. 
More precisely, 
it follows from (\ref{eq:restorationMinor}) that, with $r=(j,\beta)$, 
\begin{align*}
\delta_{\widehat{j},\widehat{\beta}}^{(j,\beta)} 
&= \det \left( \det \left(
\begin{array}{ll} x_{\ia}^{(r^+)} & x_{i,\beta}^{(r^+)} \\[1ex]
x_{j,\alpha}^{(r^+)} & x_{j,\beta}^{(r^+)} \end{array} \right) \times 
u^{-1} 
\right)_{\substack{i=i_1,\dots,i_{l-1} \\
\alpha=\alpha_1, \dots,\alpha_{l-1}}}  \\
 &= \det \left( \det \left(
\begin{array}{ll} x_{\ia}^{(r^+)} & x_{i,\beta}^{(r^+)} \\[1ex]
x_{j,\alpha}^{(r^+)} & x_{j,\beta}^{(r^+)} \end{array} \right)
\right)_{\substack{i=i_1,\dots,i_{l-1} \\ \alpha=\alpha_1,
\dots,\alpha_{l-1}}}\times\;
u^{-(l-1)}. 
\end{align*}
Now, it follows from Sylvester's identity \cite[p 33]{gantmacher} that:

\[ \det \left( \det \left( \begin{array}{ll} x_{\ia}^{(r^+)} &
x_{i,\beta}^{(r^+)} \\[1ex] 
x_{j,\alpha}^{(r^+)} & x_{j,\beta}^{(r^+)} \end{array}
\right) \right)_{\substack{i=i_1,\dots,i_{l-1} \\ \alpha=\alpha_1,
\dots,\alpha_{l-1}}} = \delta^{(j,\beta)^+} \times
u^{l-2}. 
\]
The result easily follows from these last two equalities.
\end{proof}

The following result is a direct consequence of the previous proposition. 

\begin{cor} 
\label{cor:part1}
Let $X=(x_{i,\alpha}) \in \MmpK$ and 
 $(j,\beta) \in \Ecirc$, and let 
$$\delta=[i_1,\dots,i_l|\alpha_1,\dots,\alpha_l](X)$$
be a minor of $X$ with
$(i_l,\alpha_l)  = (j,\beta)$.
 If $\delta^{(j,\beta)^+}=0$, then $x_{j,\beta}=0$ or 
$\delta_{\widehat{j},\widehat{\beta}}^{(j,\beta)}=0$.
\end{cor}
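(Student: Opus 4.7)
The plan is to derive this corollary directly from Proposition \ref{Pivot1}, which already contains essentially all the content. The proposition asserts that when $u:=x_{j,\beta}\neq 0$, the two-line identity
\[
\delta^{(j,\beta)^+}=\delta_{\widehat{j},\widehat{\beta}}^{(j,\beta)}\,u
\]
holds for any minor $\delta$ of $X$ whose bottom-right entry is the pivot $(j,\beta)$.

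So I would argue by dichotomy on whether the pivot $x_{j,\beta}$ is zero. If $x_{j,\beta}=0$, then the first alternative in the conclusion is immediate and there is nothing more to prove. Otherwise $x_{j,\beta}\neq 0$, so Proposition \ref{Pivot1} applies and yields $\delta^{(j,\beta)^+}=\delta_{\widehat{j},\widehat{\beta}}^{(j,\beta)}\,x_{j,\beta}$. Since we are assuming $\delta^{(j,\beta)^+}=0$ and we work in an integral domain (the entries lie in the field $K$), we may cancel the nonzero factor $x_{j,\beta}$ to conclude $\delta_{\widehat{j},\widehat{\beta}}^{(j,\beta)}=0$, which is the second alternative.

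There is no real obstacle here: the work has been done in Proposition \ref{Pivot1} via Sylvester's identity, and the corollary is just the contrapositive-style rearrangement of that factorisation. The only small point to note is the use of $x_{j,\beta}^{(j,\beta)^+}=x_{j,\beta}$, which is part of Proposition \ref{ObservationDeletingDerivation}(1), so that the pivot value in the hypothesis of Proposition \ref{Pivot1} agrees with the original matrix entry $x_{j,\beta}$ appearing in the statement of the corollary.
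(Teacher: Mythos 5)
Your proof is correct and is exactly the argument the paper intends: the corollary is stated there as a direct consequence of Proposition \ref{Pivot1}, and your dichotomy on $x_{j,\beta}$ followed by cancellation of the nonzero pivot in the field $K$ is precisely that deduction. The remark that $x_{j,\beta}^{(j,\beta)^+}=x_{j,\beta}$ via Proposition \ref{ObservationDeletingDerivation}(1) is a sensible detail to make explicit.
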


The converse of this result is not true in general. However, it does hold for the following class of matrices.

\begin{defi}
Let $X=(x_{i,\alpha}) \in \MmpK$
and $C$ a Cauchon diagram (of size $m\times p$). We say that $X$ is a \emph{Cauchon
matrix associated to the Cauchon
diagram $C$} provided that for all $(\ia) \in \gc 1,m \dc \times \gc 1,p \dc$, we
have
$x_{\ia}=0$ if and only if $(\ia) \in C$. If $X$ is a Cauchon matrix associated to
an unnamed Cauchon diagram, we just say that $X$ is a \emph{Cauchon matrix}.
\end{defi}

A key link with tnn matrices is the easily observed fact that every tnn matrix is a Cauchon matrix (Lemma \ref{tnnCauchon}).

\begin{prop} 
\label{propPivot}
Let $X=(x_{i,\alpha}) \in \MmpK$ 
be a  Cauchon matrix, and let
 $(j,\beta) \in \Ecirc$. Set $u:=x_{j,\beta}^{(j,\beta)^+} =x_{j,\beta}$, and let
$\delta= [i_1,\dots,i_l|\alpha_1,\dots,\alpha_l](X)$  be a minor of $X$ with
$(i_l,\alpha_l)  = (j,\beta)$.
Then $\delta^{(j,\beta)^+}=\delta_{\widehat{j},\widehat{\beta}}^{(j,\beta)} u$; 
so that 
$\delta^{(j,\beta)^+}=0$ if and only if $u=0$ 
or $\delta_{\widehat{j},\widehat{\beta}}^{(j,\beta)}=0$.
\end{prop}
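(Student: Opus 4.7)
The plan is to dispose of the case $u\neq 0$ by direct citation of Proposition~\ref{Pivot1}, and to handle the case $u=0$ by using the Cauchon hypothesis to force the vanishing of a whole row or column of the submatrix computing $\delta^{(j,\beta)}$. The identity $\delta^{(j,\beta)^+}=\delta_{\widehat{j},\widehat{\beta}}^{(j,\beta)} u$ in both cases, together with the biconditional, will then follow.

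When $u\neq 0$, I would simply invoke Proposition~\ref{Pivot1} to obtain $\delta^{(j,\beta)^+}=\delta_{\widehat{j},\widehat{\beta}}^{(j,\beta)} u$, and since $u$ is then nonzero, the equivalence $\delta^{(j,\beta)^+}=0 \iff \delta_{\widehat{j},\widehat{\beta}}^{(j,\beta)}=0$ is immediate. The content of the proposition is therefore in the case $u=x_{j,\beta}=0$, where both sides of the identity are to be shown zero. By case 2(a) of Convention~\ref{conv1}, the hypothesis $x_{j,\beta}^{(j,\beta)}=0$ forces $X^{(j,\beta)^+}=X^{(j,\beta)}$, so $\delta^{(j,\beta)^+}=\delta^{(j,\beta)}$, and it suffices to check that this last quantity vanishes.

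Here the Cauchon hypothesis enters. Since the square $(j,\beta)$ is black in the associated diagram, by definition either every square strictly above it in column $\beta$ is black, or every square strictly to its left in row $j$ is black. In the first alternative, combined with $x_{j,\beta}=0$, one has $x_{i,\beta}=0$ for all $i\leq j$, and Proposition~\ref{ObservationDeletingDerivation}(2) then guarantees $x_{i,\beta}^{(j,\beta)}=0$ for all $i\leq j$. Since $i_s\leq i_l=j$ for every $s$, the column indexed by $\alpha_l=\beta$ of the submatrix of $X^{(j,\beta)}$ extracting $\delta^{(j,\beta)}$ is entirely zero, so $\delta^{(j,\beta)}=0$. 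The second alternative is symmetric: Proposition~\ref{ObservationDeletingDerivation}(3) yields $x_{j,\alpha}^{(j,\beta)}=0$ for all $\alpha\leq\beta$, and since $\alpha_s\leq\alpha_l=\beta$, the row indexed by $i_l=j$ of the relevant submatrix vanishes, giving $\delta^{(j,\beta)}=0$ again.

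The only subtle point is the Cauchon dichotomy in the $u=0$ case and the recognition that this dichotomy is exactly what guarantees, via the preservation results of Proposition~\ref{ObservationDeletingDerivation}, that a vanishing row or column survives the restoration algorithm up to step $(j,\beta)$; everything else is routine. Combining the two cases yields the identity $\delta^{(j,\beta)^+}=\delta_{\widehat{j},\widehat{\beta}}^{(j,\beta)} u$ in full generality, and the biconditional ``$\delta^{(j,\beta)^+}=0$ iff $u=0$ or $\delta_{\widehat{j},\widehat{\beta}}^{(j,\beta)}=0$'' drops out as a trivial consequence.
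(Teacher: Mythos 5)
Your proof is correct and follows essentially the same route as the paper: the case $u\neq 0$ is exactly Proposition~\ref{Pivot1}, and the case $u=0$ uses the Cauchon dichotomy together with parts 2 and 3 of Proposition~\ref{ObservationDeletingDerivation} to force a vanishing row or column of the relevant submatrix. Your detour through $X^{(j,\beta)}$ via Convention~\ref{conv1}(2a) is harmless, since the paper applies the same vanishing argument directly to $X^{(j,\beta)^+}$.
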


\begin{proof} 
It only remains to prove that if $u=0$ 
then $\delta^{(j,\beta)^+}=0$. 

Assume that $u=0$. As $X$ is a Cauchon
matrix, this implies that either $x_{i,\beta}=0$ for all $i \leq j$ or
$x_{j,\alpha}=0$ for all $\alpha \leq \beta$. Now, it follows from
Proposition \ref{ObservationDeletingDerivation} that either
$x_{i,\beta}^{(j,\beta)^+}=0$ for all $i \leq j$ or
$x_{j,\alpha}^{(j,\beta)^+}=0$ for all $\alpha \leq \beta$. Of course, in both
cases we get $\delta^{(j,\beta)^+}= 0$ as
either the last column or the last row of the submatrix
$X^{(j,\beta)^+}[i_1,\dots,i_l;\alpha_1,\dots,\alpha_l]$ is zero. 
\end{proof}

The formulae for the deleting derivations algorithm and the restoration
algorithm show how the individual elements of a matrix change during the
running of the algorithms. As we are concerned with arbitrary minors for much
of the time, we need more general formulae that apply to minors. These are 
given in the following results.

\begin{prop} 
\label{Form0Start}
 Let $X=(x_{i,\alpha}) \in \MmpK$ {\rm(}not necessarily a Cauchon matrix{\rm)} and $(j,\beta) \in \Ecirc$. Set
$u:=x_{j,\beta}$ 
 and let $\delta
 =[i_1,\dots,i_l|\alpha_1,\dots,\alpha_l](X)$  
be a minor of $X$ with $(i_l,\alpha_l) < (j,\beta)$. 
If $u=0$, or if $i_l=j$, or if $\beta \in \{\alpha_1,\dots,\alpha_l\}$, or if
$\beta  < \alpha_1$, then $\delta^{(j,\beta)^+}=\delta^{(j,\beta)}$.
\end{prop}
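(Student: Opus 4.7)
The plan is to handle the four alternative hypotheses one at a time and in each case compare the submatrices $M^{(r)}:=X^{(r)}[i_1,\dots,i_l;\alpha_1,\dots,\alpha_l]$ and $M^{(r^+)}:=X^{(r^+)}[i_1,\dots,i_l;\alpha_1,\dots,\alpha_l]$ directly, where $r=(j,\beta)$. The key observation I will use is that the restoration rule of Convention \ref{conv1} has the shape of an elementary row operation using row $j$ as pivot, and equivalently of an elementary column operation using column $\beta$ as pivot; whenever row $j$ or column $\beta$ already lies inside the minor, one of these viewpoints will give determinant invariance for free. Recall that, by Proposition \ref{ObservationDeletingDerivation}(1), the pivot $x_{j,\beta}^{(r)}$ equals $u=x_{j,\beta}$.

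Two of the four cases are immediate. If $u=0$, the restoration step does nothing at all (Convention \ref{conv1}(2)(a)), so $X^{(r^+)}=X^{(r)}$ and in particular $\delta^{(r^+)}=\delta^{(r)}$. If $\beta<\alpha_1$, every column $\alpha_t$ of the minor satisfies $\alpha_t>\beta$, so none of the entries of the submatrix are modified by the restoration step, and again $M^{(r^+)}=M^{(r)}$.

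For the hypothesis $i_l=j$, the strict inequality $(i_l,\alpha_l)<(j,\beta)$ forces $\alpha_l<\beta$, hence every column $\alpha_t\le\alpha_l<\beta$ and every $i_s$ with $s<l$ satisfies $i_s<j$. Row $l$ of $M^{(r)}$, which is row $j$ of $X^{(r)}$, is unchanged by the step, while for $s<l$ the update formula reads
\[
M^{(r^+)}_{s,\bullet}\;=\;M^{(r)}_{s,\bullet}\;+\;u^{-1}x_{i_s,\beta}^{(r)}\cdot M^{(r)}_{l,\bullet}.
\]
This is a sequence of elementary row operations on $M^{(r)}$, so $\det M^{(r^+)}=\det M^{(r)}$.

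For the hypothesis $\beta\in\{\alpha_1,\dots,\alpha_l\}$, write $\alpha_q=\beta$. A preliminary compatibility check rules out $i_l=j$ here: if $i_l=j$ then $(i_l,\alpha_l)<(j,\beta)$ gives $\alpha_l<\beta=\alpha_q\le\alpha_l$, absurd. Hence every $i_s<j$. Column $q$ of the submatrix is unchanged (the update requires $\alpha<\beta$), and for $t<q$ the identity $x_{i_s,\beta}^{(r)}=M^{(r)}_{s,q}$ together with the restoration formula gives
\[
M^{(r^+)}_{s,t}\;=\;M^{(r)}_{s,t}\;+\;u^{-1}x_{j,\alpha_t}^{(r)}\, M^{(r)}_{s,q}.
\]
Thus $M^{(r^+)}$ is obtained from $M^{(r)}$ by adding $u^{-1}x_{j,\alpha_t}^{(r)}$ times column $q$ to column $t$ for each $t<q$; this is an elementary column operation, so $\det M^{(r^+)}=\det M^{(r)}$.

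The only subtle point I anticipate is precisely that compatibility check in the last case: if one forgot to exclude $i_l=j$ there, the column-operation picture would misrepresent row $l$, since the genuine restoration leaves row $j$ alone while the column operation would alter it. Once that observation is in place, the whole proposition reduces to the standard invariance of determinants under elementary row and column operations.
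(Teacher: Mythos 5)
Your proof is correct, and it takes a genuinely different route from the paper's. Where the paper proves the two nontrivial cases ($i_l=j$, and $i_l<j$ with $\beta\in\{\alpha_2,\dots,\alpha_l\}$) by induction on $l$, expanding $\delta^{(j,\beta)^+}$ along its first row (resp.\ first column) and killing the extra Laplace terms as expansions of a determinant with two equal rows (resp.\ columns), you observe directly that the restoration step acts on the relevant $l\times l$ submatrix as a sequence of elementary operations: when $i_l=j$ (so $\alpha_l<\beta$), every modified row $s<l$ gains $u^{-1}x_{i_s,\beta}^{(j,\beta)}$ times the unchanged row $l$ (which is row $j$ of $X^{(j,\beta)}$), and when $\beta=\alpha_q$ with $i_l<j$, every column $t<q$ gains $u^{-1}x_{j,\alpha_t}^{(j,\beta)}$ times the unchanged column $q$; determinant invariance under such operations finishes the argument with no induction at all. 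Your compatibility check that $i_l=j$ and $\beta\in\{\alpha_1,\dots,\alpha_l\}$ cannot occur simultaneously is exactly the point that keeps the column-operation picture honest, and it mirrors the paper's tacit restriction to $i_l<j$ in that case; note also that your trivial cases cover $\beta=\alpha_1$ (no columns $t<q$ to modify), matching the paper's ``$\beta\le\alpha_1$'' case. The only cosmetic remark is that in the two nontrivial cases you should say explicitly that $u\neq 0$ may be assumed (the case $u=0$ having been disposed of), since the update formula with $u^{-1}$ is only the rule when the pivot is nonzero. Your approach is shorter and arguably more transparent; the paper's induction-plus-Laplace scheme has the advantage of being the template that is reused, with genuine content, in Lemma 3.10 and Proposition 3.11, where the determinant is no longer preserved and the correction terms must be tracked rather than cancelled.
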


\begin{proof}
If $u=0$, then $x_{\ia}^{(j,\beta)^+}=x_{\ia}^{(j,\beta)}$ 
for all $(\ia)$; so the result is clear. Hence, we assume that $u \neq 0$. If
$\beta \leq \alpha_1$, then $x_{\ia}^{(j,\beta)^+}=x_{\ia}^{(j,\beta)}$ for all $i$ and
all $\alpha\ge \alpha_1$, and again the result is clear.

Next, assume that $i_l=j$. Then $\alpha_l < \beta $ because $(i_l,\alpha_l) <
(j,\beta)$. We proceed by induction on $l$. In case $l=1$, we have
$$\delta^{(j,\beta)^+}= x_{j,\alpha_1}^{(j,\beta)^+}= x_{j,\alpha_1}^{(j,\beta)}= \delta^{(j,\beta)}.$$
Now suppose that $l>1$ and that the result holds in the $(l-1)\times(l-1)$ case. By the induction hypothesis, $\delta_{\widehat{i_1},\widehat{\alpha_k}}^{(j,\beta)^+} = \delta_{\widehat{i_1},\widehat{\alpha_k}}^{(j,\beta)}$ for all $k\in \onel$. Since $i_1<j$ and all $\alpha_k <\beta$, expansion of the minor $\delta^{(j,\beta)^+}$ along its first row yields
\begin{align*}
\delta^{(j,\beta)^+} &= \sum_{k=1}^l (-1)^{k+1} x_{i_1,\alpha_k}^{(j,\beta)^+} \delta_{\widehat{i_1},\widehat{\alpha_k}}^{(j,\beta)^+} \\
 &= \sum_{k=1}^l (-1)^{k+1} \bigl( x_{i_1,\alpha_k}^{(j,\beta)}+ x_{i_1,\beta}^{(j,\beta)} x_{j,\beta}^{-1} x_{j,\alpha_k}^{(j,\beta)} \bigr) \delta_{\widehat{i_1},\widehat{\alpha_k}}^{(j,\beta)} \\
 &= \sum_{k=1}^l (-1)^{k+1} x_{i_1,\alpha_k}^{(j,\beta)} \delta_{\widehat{i_1},\widehat{\alpha_k}}^{(j,\beta)}  + x_{i_1,\beta}^{(j,\beta)} x_{j,\beta}^{-1}  \sum_{k=1}^l (-1)^{k+1} x_{j,\alpha_k}^{(j,\beta)} \delta_{\widehat{i_1},\widehat{\alpha_k}}^{(j,\beta)} = \delta^{(j,\beta)},
 \end{align*}
by Laplace expansions. (The last summation vanishes because it expands a minor whose first and last rows are equal.) This completes the induction.

Finally, assume that $i_l<j$ and $\beta\in \{\alpha_2,\dots,\alpha_l\}$. Note that $l\ge 2$ and $\alpha_l \geq \beta$. We again proceed by induction on $l$. In case $l=2$, we have
\begin{align*}
\delta^{(j,\beta)^+} &= \det \begin{pmatrix} x_{i_1,\alpha_1}^{(j,\beta)^+} &x_{i_1,\beta}^{(j,\beta)^+}\\ x_{i_2,\alpha_1}^{(j,\beta)^+} &x_{i_2,\beta}^{(j,\beta)^+} \end{pmatrix} = \det \begin{pmatrix} x_{i_1,\alpha_1}^{(j,\beta)}+ x_{i_1,\beta}^{(j,\beta)} x_{j,\beta}^{-1} x_{j,\alpha_1}^{(j,\beta)} &x_{i_1,\beta}^{(j,\beta)}\\ x_{i_2,\alpha_1}^{(j,\beta)}+ x_{i_2,\beta}^{(j,\beta)} x_{j,\beta}^{-1} x_{j,\alpha_1}^{(j,\beta)} &x_{i_2,\beta}^{(j,\beta)} \end{pmatrix} \\
 &= \det \begin{pmatrix} x_{i_1,\alpha_1}^{(j,\beta)} &x_{i_1,\beta}^{(j,\beta)}\\ x_{i_2,\alpha_1}^{(j,\beta)} &x_{i_2,\beta}^{(j,\beta)} \end{pmatrix} = \delta^{(j,\beta)}.
 \end{align*}
Now suppose that $l>2$ and that the result holds in the $(l-1)\times(l-1)$ case. By the induction hypothesis, $\delta_{\widehat{i_k},\widehat{\alpha_1}}^{(j,\beta)^+} = \delta_{\widehat{i_k},\widehat{\alpha_1}}^{(j,\beta)}$ for all $k\in \onel$. Since $\alpha_1< \beta$ and all $i_k<j$, expansion of the minor $\delta^{(j,\beta)^+}$ along its first column yields
\begin{align*}
\delta^{(j,\beta)^+} &= \sum_{k=1}^l (-1)^{k+1} x_{i_k,\alpha_1}^{(j,\beta)^+} \delta_{\widehat{i_k},\widehat{\alpha_1}}^{(j,\beta)^+} \\
 &= \sum_{k=1}^l (-1)^{k+1} \bigl( x_{i_k,\alpha_1}^{(j,\beta)}+ x_{i_k,\beta}^{(j,\beta)} x_{j,\beta}^{-1} x_{j,\alpha_1}^{(j,\beta)} \bigr) \delta_{\widehat{i_k},\widehat{\alpha_1}}^{(j,\beta)} \\
 &= \sum_{k=1}^l (-1)^{k+1} x_{i_k,\alpha_1}^{(j,\beta)} \delta_{\widehat{i_k},\widehat{\alpha_1}}^{(j,\beta)}  + x_{j,\beta}^{-1}  x_{j,\alpha_1}^{(j,\beta)} \sum_{k=1}^l (-1)^{k+1} x_{i_k,\beta}^{(j,\beta)} \delta_{\widehat{i_k},\widehat{\alpha_1}}^{(j,\beta)} = \delta^{(j,\beta)},
 \end{align*}
by Laplace expansions. This completes the induction.
\end{proof}

\begin{lem} \label{form0lemma}
Let 
$X=(x_{i,\alpha}) \in \MmpK$
 and  
$(j,\beta) \in \Ecirc$. 
Set $u:=x_{j,\beta}$ and let 
$\delta= [i_1,\dots,i_l|\alpha_1,\dots,\alpha_l](X)$  be a minor of $X$ with 
$i_l < j$ and $\alpha_l < \beta$.
Assume that 
$u \neq 0$. 
Then 
$$\delta^{(j,\beta)^+} = \delta^{(j,\beta)} +
  \sum_{k=1}^l (-1)^{k+l}\delta_{i_k \rightarrow j}^{(j,\beta)}\, 
 x_{i_k,\beta}^{(j,\beta)} u^{-1}.
$$
\end{lem}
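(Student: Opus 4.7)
The plan is to recognise the relevant submatrix as a rank-one row-update and then to exploit multilinearity of the determinant.

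Since $u = x_{j,\beta} \neq 0$ and $x_{j,\beta}^{(j,\beta)^+} = x_{j,\beta}^{(j,\beta)} = u$, for each $k\in \onel$ and each $s\in \onel$ we have $i_k<j$ and $\alpha_s<\beta$, so the restoration formula in Convention \ref{conv1} gives
\[
x_{i_k,\alpha_s}^{(j,\beta)^+} = x_{i_k,\alpha_s}^{(j,\beta)} + x_{i_k,\beta}^{(j,\beta)}\,u^{-1}\,x_{j,\alpha_s}^{(j,\beta)}.
\]
Thus, writing $A$ for the $l\times l$ submatrix $X^{(j,\beta)}[i_1,\dots,i_l;\alpha_1,\dots,\alpha_l]$, the corresponding submatrix of $X^{(j,\beta)^+}$ equals $A+vw^{\mathrm{tr}}$, where $v$ is the column vector with entries $v_k = x_{i_k,\beta}^{(j,\beta)}u^{-1}$ and $w$ is the row vector with entries $w_s = x_{j,\alpha_s}^{(j,\beta)}$.

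Next, I would apply multilinearity of the determinant in the rows: expanding each row $a_k + v_k w$ of $A+vw^{\mathrm{tr}}$ as a sum of two rows, any term in which two or more rows are replaced by multiples of $w$ vanishes (proportional rows), leaving
\[
\delta^{(j,\beta)^+} = \det(A+vw^{\mathrm{tr}}) = \det(A) + \sum_{k=1}^l v_k \det(A_k) = \delta^{(j,\beta)} + \sum_{k=1}^l x_{i_k,\beta}^{(j,\beta)}\,u^{-1}\,\det(A_k),
\]
where $A_k$ denotes $A$ with its $k$-th row replaced by $w^{\mathrm{tr}}$.

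The only remaining point is to identify $\det(A_k)$ with $\pm\delta_{i_k\to j}^{(j,\beta)}$. The matrix $A_k$ is the submatrix of $X^{(j,\beta)}$ on rows $i_1,\dots,i_{k-1},j,i_{k+1},\dots,i_l$ (in that order) and columns $\alpha_1,\dots,\alpha_l$. Since $j>i_l>\cdots >i_{k+1}$, moving $j$ past the remaining $l-k$ rows to put them in ascending order produces a sign $(-1)^{l-k}$; hence $\det(A_k) = (-1)^{l-k}\,\delta_{i_k\to j}^{(j,\beta)}$. Substituting and noting that $(-1)^{l-k}=(-1)^{l+k}$ yields the stated formula. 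The only delicate step is the sign-tracking in this reordering; everything else is a direct computation.
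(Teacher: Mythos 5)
Your proof is correct, including the sign bookkeeping: $\det(A_k)=(-1)^{l-k}\,\delta_{i_k\rightarrow j}^{(j,\beta)}$ because $j$ exceeds every $i_s$, and $(-1)^{l-k}=(-1)^{k+l}$; the rank-one identification is exactly what Convention \ref{conv1}(2)(b) gives once one notes $x_{j,\beta}^{(j,\beta)}=x_{j,\beta}=u$ (Proposition \ref{ObservationDeletingDerivation}(1)). Your route, however, is genuinely different from the paper's. The paper does not treat the $l\times l$ submatrix of $X^{(j,\beta)^+}$ as a rank-one update of that of $X^{(j,\beta)}$; instead it borders the minor by row $j$ and column $\beta$: by Proposition \ref{Pivot1} (which rests on Sylvester's identity), $\delta^{(j,\beta)}\,u$ equals the $(l+1)\times(l+1)$ minor of $X^{(j,\beta)^+}$ on rows $i_1,\dots,i_l,j$ and columns $\alpha_1,\dots,\alpha_l,\beta$; expanding that determinant along its last column, using $x_{i_k,\beta}^{(j,\beta)^+}=x_{i_k,\beta}^{(j,\beta)}$ and Proposition \ref{Form0Start} (the case $i_l=j$) to replace $\delta_{i_k\rightarrow j}^{(j,\beta)^+}$ by $\delta_{i_k\rightarrow j}^{(j,\beta)}$, and solving for $\delta^{(j,\beta)^+}$ yields the formula. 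Your argument is more elementary and self-contained: it uses only the entrywise restoration formula plus multilinearity of the determinant (the mixed terms vanish because of proportional rows), and it avoids invoking Propositions \ref{Pivot1} and \ref{Form0Start} altogether. What the paper's bordered-minor computation buys is uniformity with the surrounding machinery: the same template, expanded along the last row instead of the last column, gives the variant Lemma \ref{form0lemmaBIS} in Appendix B, and all intermediate quantities stay expressed as minors of the matrices $X^{(r)}$, the currency used throughout that section. (A cosmetic point only: writing $v w^{\mathrm{tr}}$ when $w$ is already a row vector is slightly inconsistent, but the intended rank-one matrix $\bigl(v_k w_s\bigr)_{k,s}$ is clear.)
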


\begin{proof}
It follows from Proposition~\ref{Pivot1} that 
\begin{eqnarray*}
\delta^{(j,\beta)}\, u & = & \det \left( 
\begin{array}{ccc}
x_{i_1,\alpha_1}^{(j,\beta)} & \dots & x_{i_1,\alpha_l}^{(j,\beta)} \\[1ex]
\vdots & & \vdots \\[1ex]
x_{i_l,\alpha_1}^{(j,\beta)} & \dots & x_{i_l,\alpha_l}^{(j,\beta)}
\end{array} \right) \times u  \\[2ex]
 & = & \det \left( 
\begin{array}{cccc}
x_{i_1,\alpha_1}^{(j,\beta)^+} & \dots & x_{i_1,\alpha_l}^{(j,\beta)^+} 
& x_{i_1,\beta}^{(j,\beta)^+} \\[1ex]
\vdots & & \vdots &  \vdots \\[1ex]
x_{i_l,\alpha_1}^{(j,\beta)^+} & \dots & x_{i_l,\alpha_l}^{(j,\beta)^+} 
& x_{i_l,\beta}^{(j,\beta)^+} \\[1ex]
x_{j,\alpha_1}^{(j,\beta)^+} & \dots & x_{j,\alpha_l}^{(j,\beta)^+} & u  \\
\end{array} \right).  \\
\end{eqnarray*}
Expanding this determinant along its last column leads to
\begin{eqnarray*}
\delta^{(j,\beta)}\, u & = & u\, \delta^{(j,\beta)^+} + 
\sum_{k=1}^l (-1)^{k+l+1}  x_{i_k,\beta}^{(j,\beta)^+}
  \delta_{i_k \rightarrow j}^{(j,\beta)^+}.
\end{eqnarray*}
By construction, 
$x_{i_k,\beta}^{(j,\beta)^+}=x_{i_k, \beta}^{(j,\beta)}$, 
and it follows from Proposition~\ref{Form0Start} that 
$\delta_{i_k \rightarrow j}^{(j,\beta)^+}
=\delta_{i_k \rightarrow j}^{(j,\beta)}$. 
Hence,   
 \begin{eqnarray*}
 \delta^{(j,\beta)^+} = \delta^{(j,\beta)}
  - \sum_{k=1}^l (-1)^{k+l+1} u^{-1} x_{i_k,\beta}^{(j,\beta)}
  \delta_{i_k \rightarrow j}^{(j,\beta)}.
\end{eqnarray*}
\end{proof}

\begin{prop} 
\label{form0}
Let $X=(x_{i,\alpha}) \in \MmpK$
 and $(j,\beta) \in \Ecirc$.
Set $u:= x_{j,\beta}$, and let $\delta=
[i_1,\dots,i_l|\alpha_1,\dots,\alpha_l](X)$ be a minor of $X$. Assume that $u \neq 0$ and that $i_l < j$ while  $\alpha_h < \beta < \alpha_{h+1}$ for some $h \in \onel$. 
{\rm(}By convention, $\alpha_{l+1}=p+1$.{\rm)} Then
$$\delta^{(j,\beta)^+} = \delta^{(j,\beta)}
  + \delta_{\alpha_h \rightarrow \beta}^{(j,\alpha_h)}\, 
 x_{j,\alpha_h} u^{-1} .$$
\end{prop}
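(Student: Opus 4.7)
My plan is to compute the difference $\delta^{(j,\beta)^+}-\delta^{(j,\beta)}$ by multilinearity of the determinant in the columns affected by the restoration step at $(j,\beta)$, and then to recognize the resulting alternating sum as the single term asserted by the proposition. Since $i_l<j$ and $\alpha_h<\beta<\alpha_{h+1}$, only the columns of $X^{(j,\beta)}[I;\Lambda]$ indexed by $\alpha_1,\dots,\alpha_h$ are modified in passing to $X^{(j,\beta)^+}[I;\Lambda]$; by Convention~\ref{conv1}, the $k$-th such column acquires $u^{-1}x_{j,\alpha_k}$ times a single vector $b=(x_{i_s,\beta}^{(j,\beta)})_{s=1}^{l}$ (the row-$j$ entries are unchanged). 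Expanding the determinant multilinearly in these $h$ columns, discarding the vanishing terms in which $b$ appears twice, and reordering each single-$b$ determinant so that the inserted $\beta$-column lies in ascending position between $\alpha_h$ and $\alpha_{h+1}$ (sign $(-1)^{h-k}$), one obtains
\begin{equation*}
\delta^{(j,\beta)^+}-\delta^{(j,\beta)}\;=\;u^{-1}\sum_{k=1}^{h}(-1)^{h-k}\,x_{j,\alpha_k}\,\delta_{\alpha_k\to\beta}^{(j,\beta)}.
\end{equation*}

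It then remains to establish the collapse identity
\begin{equation*}
\sum_{k=1}^{h}(-1)^{h-k}\,x_{j,\alpha_k}\,\delta_{\alpha_k\to\beta}^{(j,\beta)}\;=\;x_{j,\alpha_h}\,\delta_{\alpha_h\to\beta}^{(j,\alpha_h)},
\end{equation*}
which I would prove by induction on $\beta-\alpha_h$. In the base case $\beta=\alpha_h+1$, so $X^{(j,\beta)}=X^{(j,\alpha_h)^+}$, the same multilinearity argument applied to the auxiliary minor $\delta_{\alpha_h\to\beta}$ at the single restoration step $(j,\alpha_h)\to(j,\alpha_h)^+$ yields
\begin{equation*}
x_{j,\alpha_h}\,\delta_{\alpha_h\to\beta}^{(j,\alpha_h)}\;=\;x_{j,\alpha_h}\,\delta_{\alpha_h\to\beta}^{(j,\beta)}+\sum_{k=1}^{h-1}(-1)^{h-k}\,x_{j,\alpha_k}\,\delta_{\alpha_k\to\beta}^{(j,\alpha_h)};
\end{equation*}
and Proposition~\ref{Form0Start} applied to each $\delta_{\alpha_k\to\beta}$ with $k<h$ (whose column set contains $\alpha_h$) gives $\delta_{\alpha_k\to\beta}^{(j,\alpha_h)}=\delta_{\alpha_k\to\beta}^{(j,\alpha_h)^+}=\delta_{\alpha_k\to\beta}^{(j,\beta)}$, from which the collapse identity follows after recombining the $k=h$ term.

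For the inductive step, the same multilinearity argument applied at each intermediate pivot $(j,\beta'')$ with $\alpha_h<\beta''<\beta$, combined with the inductive hypothesis (available for the smaller gap $\beta''-\alpha_h$) and Proposition~\ref{Form0Start} (to freeze auxiliary minors whose column sets already contain the current pivot column), produces a chain of identities that telescopes from step $(j,\alpha_h)$ up to step $(j,\beta)$, comparing $\delta_{\alpha_k\to\beta}^{(j,\alpha_h)}$ to $\delta_{\alpha_k\to\beta}^{(j,\beta)}$ term by term and assembling into the required identity.

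The main obstacle is the combinatorial bookkeeping in this telescoping: each intermediate pivot introduces auxiliary minors with column sets involving new indices $\beta''\in(\alpha_h,\beta)$, and one must carefully track their evolution across the remaining pivots and verify the cancellation of extraneous terms. Propositions~\ref{Pivot1} and~\ref{Form0Start}, together with the inductive hypothesis of Proposition~\ref{form0} for smaller values of $\beta-\alpha_h$, provide the necessary cancellation mechanism.
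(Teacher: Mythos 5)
Your opening reduction is correct: the multilinearity expansion in the columns $\alpha_1,\dots,\alpha_h$ is valid, and your first display is precisely Lemma~\ref{form0lemmaBIS} of Appendix~B (which the paper proves instead by a bordered-determinant induction). Your base case of the collapse identity is also sound, granted $x_{j,\alpha_h}\neq 0$ so that the step at $(j,\alpha_h)$ is nontrivial; that tacit assumption is no worse than the paper's own proof, which applies Proposition~\ref{Pivot1} at the pivot $(j,\alpha_h)$ at the corresponding point.

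The genuine gap is the inductive step, i.e.\ the entire case $\beta-\alpha_h>1$, which is exactly where the content of the proposition lies and which you leave as an acknowledged ``main obstacle''. Moreover, the induction you describe does not engage: the quantity to be collapsed is $\sum_{k=1}^h(-1)^{h-k}x_{j,\alpha_k}\,\delta_{\alpha_k\to\beta}^{(j,\beta)}$, whose auxiliary minors all carry the \emph{fixed} column $\beta$; pushing it through an intermediate pivot $(j,\gamma)$ with $\alpha_h<\gamma<\beta$ produces minors in which \emph{two} columns have been replaced (by $\gamma$ and by $\beta$), and never an instance of the collapse identity for the smaller gap $\gamma-\alpha_h$ (that would involve the minors $\delta_{\alpha_k\to\gamma}$), so the ``inductive hypothesis for the smaller gap'' has nothing to apply to. What is actually needed, and what would complete your argument, is the invariance statement
\[
\sum_{k=1}^h(-1)^{h-k}x_{j,\alpha_k}\,\delta_{\alpha_k\to\beta}^{(j,\gamma)^+}\;=\;\sum_{k=1}^h(-1)^{h-k}x_{j,\alpha_k}\,\delta_{\alpha_k\to\beta}^{(j,\gamma)} \qquad (\alpha_h<\gamma<\beta),
\]
which holds because in the multilinearity expansion each doubly-replaced minor arises twice, once from the $k$-th summand and once from the $t$-th, with opposite signs, so these contributions cancel in pairs; combined with your base-case computation at $(j,\alpha_h)$ this yields the collapse identity for all $\beta$. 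That sign bookkeeping is precisely what you declare unresolved, so as it stands the proof is incomplete at its crux. For comparison, the paper avoids the telescoping altogether: it inducts on $l+1-h$, using a Laplace expansion of $\delta^{(j,\beta)^+}$ along its last column to reduce to the case $h=l$, which it settles with the row-expansion Lemma~\ref{form0lemma} together with Propositions~\ref{Form0Start} and~\ref{Pivot1} and one further Laplace expansion; either carrying out the cancellation check above or switching to that decomposition would close the gap.
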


\begin{proof} We proceed by induction on $l+1-h$. If $l+1-h=1$, then $h=l$ and $\alpha_l < \beta$. 
It follows from Lemma \ref{form0lemma} that
$$
\delta^{(j,\beta)^+} = \delta^{(j,\beta)}
  +\sum_{k=1}^l (-1)^{k+l}\delta_{i_k \rightarrow j}^{(j,\beta)}\, 
 x_{i_k,\beta}^{(j,\beta)} u^{-1}.
 $$
Moreover, it follows from Proposition \ref{Form0Start} that
$$
\delta_{i_k \rightarrow j}^{(j,\beta)} 
= \delta_{i_k \rightarrow j}^{(j,\beta-1)} 
= \dots =\delta_{i_k \rightarrow j}^{(j,\alpha_l+1)}. 
$$
Then we deduce from Proposition \ref{Pivot1} that 
$$
\delta_{i_k \rightarrow j}^{(j,\beta)} 
=\delta_{\widehat{i_k}, \widehat{\alpha_l}}^{(j,\alpha_l)} 
x_{j,\alpha_l}.
$$
As  
$x_{i_k,\beta}^{(j,\beta)}
= x_{i_k,\beta}^{(j,\beta-1)}= \dots 
= x_{i_k,\beta}^{(j,\alpha_l)}$ by construction,  we obtain 
$$
\delta^{(j,\beta)^+} 
= \delta^{(j,\beta)} +
  \sum_{k=1}^l (-1)^{k+l}
  \delta_{\widehat{i_k}, \widehat{\alpha_l}}^{(j,\alpha_l)}\, 
  x_{j,\alpha_l}
 x_{i_k,\beta}^{(j,\alpha_l)} u^{-1} .$$
Hence, by using a Laplace expansion, we obtain 
$$
\delta^{(j,\beta)^+} = \delta^{(j,\beta)}
  + \delta_{\alpha_l \rightarrow \beta}^{(j,\alpha_l)}\, 
  x_{j,\alpha_l} u^{-1},
  $$ 
as desired.

Now let $l+1-h>1$, and assume the result holds for smaller values of $l+1-h$. Expand the minor $\delta^{(j,\beta)^+} $ along its last column, to get
$$\delta^{(j,\beta)^+} = \sum_{k=1}^l (-1)^{k+l} x_{i_k,\alpha_l}^{(j,\beta)^+} \delta_{\widehat{i_k}, \widehat{\alpha_l}}^{(j,\beta)^+}.$$
The value corresponding to $l+1-h$ for the minors $\delta_{\widehat{i_k}, \widehat{\alpha_l}}^{(j,\beta)^+}$ is $l-h$, and so the induction hypothesis applies. We obtain
$$\delta_{\widehat{i_k}, \widehat{\alpha_l}}^{(j,\beta)^+}= \delta_{\widehat{i_k}, \widehat{\alpha_l}}^{(j,\beta)}+ \delta^{(j,\alpha_h)}_{\substack{ \widehat{i_k}, \widehat{\alpha_l}\\ \alpha_h \rightarrow \beta }} x_{j,\alpha_h} u^{-1}$$
for $k\in\onel$. As  
$x_{i_k,\alpha_l}^{(j,\beta)^+}
= x_{i_k,\alpha_l}^{(j,\beta)}= \dots 
= x_{i_k,\alpha_l}^{(j,\alpha_h)}$ by construction,  we obtain 
\begin{align*}
\delta^{(j,\beta)^+} &= \sum_{k=1}^l (-1)^{k+l} x_{i_k,\alpha_l}^{(j,\beta)} \delta_{\widehat{i_k}, \widehat{\alpha_l}}^{(j,\beta)} +\sum_{k=1}^l (-1)^{k+l} x_{i_k,\alpha_l}^{(j,\alpha_h)}  \delta^{(j,\alpha_h)}_{\substack{ \widehat{i_k}, \widehat{\alpha_l}\\ \alpha_h \rightarrow \beta }} x_{j,\alpha_h} u^{-1} \\
 &= \delta^{(j,\beta)}
  + \delta_{\alpha_h \rightarrow \beta}^{(j,\alpha_h)}
  x_{j,\alpha_h} u^{-1},
\end{align*}
by two final Laplace expansions. This concludes the induction step.
\end{proof}

Even though Propositions \ref{Form0Start} and \ref{form0} constitute important steps towards a
characterisation of the minors of $X^{(j,\beta)^+}$ that are equal to zero in
terms of the minors of $X^{(j,\beta)}$ that are equal to zero, 
the sum in the last part of Proposition \ref{form0} causes problems. In
order to overcome this, we introduce a new class of matrices in the next
section.

%%%%%%%%%
\subsection{The effect of the restoration algorithm on the minors of an 
$\hc$-invariant Cauchon matrix.}  
\label{effect2}

\begin{defi}
Let $X=(x_{\ia}) \in \MmpK$. 
Then $X$
is said to be \emph{$\hc$-invariant} if
$$ \delta ^{(j,\beta)^+}=0 \quad\Longrightarrow\quad\delta^{(j,\beta)}=0$$
for all $(j,\beta) \in
\Ecirc$ and all minors $\delta= [i_1,\dots,i_l|\alpha_1,\dots,\alpha_l](X)$
of $X$ such that $(i_l,\alpha_l) < (j,\beta)$. 
\end{defi}

In the following sections, we will construct several examples of 
 $\hc$-invariant Cauchon matrices. One of the main
examples of an 
$\hc$-invariant Cauchon matrix is the
matrix $(Y_{\ia}+J)$, where the $Y_{\ia}$ denote the canonical generators of
$\pmmpc$, and $J$ is a Poisson $\hc$-prime ideal of this Poisson algebra (see
Section \ref{sectionPoisson}). This is the reason why we use the terminology
``$\hc$-invariant" in the previous definition. \\

When $X$ is an $\hc$-invariant Cauchon matrix, we deduce from Propositions \ref{Form0Start} and
\ref{form0} the following characterisation of the minors of $X^{(j,\beta)^+}$
that are equal to zero in terms of the minors of $X^{(j,\beta)}$ that are
equal to zero.

\begin{prop}
\label{prop:criterion}
Let $X=(x_{i,\alpha}) \in \MmpK$ be 
an $\hc$-invariant  Cauchon matrix,  and let 
 $(j,\beta) \in \Ecirc$. 
 Set $u:= x_{j,\beta}$.
Suppose that 
$\delta= [i_1,\dots,i_l|\alpha_1,\dots,\alpha_l](X)$  
is  a minor of $X$ with $(i_l,\alpha_l) < (j,\beta)$.
\begin{enumerate}

\item Assume that $u=0$. Then $\delta^{(j,\beta)^+}=0 $ if and only if
$\delta^{(j,\beta)}=0$.

\item Assume that $u\neq 0$. 
If $i_l=j$, or if $\beta\in \{\alpha_1,\dots,\alpha_l\}$, or if $\beta < \alpha_1$,
then
$\delta^{(j,\beta)^+}=0$ if and only if $\delta^{(j,\beta)}=0$.

\item Assume that 
$u \neq 0$ and $i_l < j$ while $\alpha_{h} < \beta < \alpha_{h+1}$ 
for some $h \in \onel$.
Then $\delta^{(j,\beta)^+}=0$ 
if and only if $\delta^{(j,\beta)}=0$ and 
either $\delta_{\alpha_h \rightarrow \beta}^{(j,\alpha_h)} =0$ 
or $ x_{j,\alpha_h}=0$.
\end{enumerate}
\end{prop}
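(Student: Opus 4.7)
The plan is to treat the three parts separately, since each corresponds to a different formula from the preceding subsection relating $\delta^{(j,\beta)^+}$ to $\delta^{(j,\beta)}$. The $\hc$-invariance hypothesis is a hypothesis about \emph{all} minors $\delta$ of $X$ with last cell before $(j,\beta)$, so it can be applied in the proof freely. Parts 1 and 2 should reduce to the trivial equivalence coming from an equality $\delta^{(j,\beta)^+}=\delta^{(j,\beta)}$, and only Part 3 will require a small argument using $\hc$-invariance itself.

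For Part 1, the pivot at step $(j,\beta)$ is $x_{j,\beta}^{(j,\beta)}$, and by Proposition \ref{ObservationDeletingDerivation}(1) this equals $x_{j,\beta}=u=0$. Convention \ref{conv1}(2)(a) then gives $x_{\ia}^{(j,\beta)^+}=x_{\ia}^{(j,\beta)}$ for all $(\ia)$, so $X^{(j,\beta)^+}=X^{(j,\beta)}$ and a fortiori $\delta^{(j,\beta)^+}=\delta^{(j,\beta)}$, establishing the equivalence. For Part 2, each of the three hypothesised configurations ($i_l=j$; $\beta\in\{\alpha_1,\dots,\alpha_l\}$; $\beta<\alpha_1$) is exactly a case covered by Proposition \ref{Form0Start}, which under the blanket assumption $(i_l,\alpha_l)<(j,\beta)$ yields $\delta^{(j,\beta)^+}=\delta^{(j,\beta)}$. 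Note that we do \emph{not} need to invoke $\hc$-invariance in either of these two parts; the Cauchon matrix hypothesis is also not used here.

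For Part 3, the key input is Proposition \ref{form0}, which gives the identity
\[
\delta^{(j,\beta)^+} \;=\; \delta^{(j,\beta)} \;+\; \delta_{\alpha_h\rightarrow\beta}^{(j,\alpha_h)}\, x_{j,\alpha_h}\, u^{-1}.
\]
The backward implication is immediate: if $\delta^{(j,\beta)}=0$ and either $\delta_{\alpha_h\rightarrow\beta}^{(j,\alpha_h)}=0$ or $x_{j,\alpha_h}=0$, then both summands vanish. For the forward implication, assume $\delta^{(j,\beta)^+}=0$. Since $(i_l,\alpha_l)<(j,\beta)$ and $X$ is $\hc$-invariant, the defining implication of $\hc$-invariance applied to $\delta$ yields $\delta^{(j,\beta)}=0$. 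Substituting this back into the displayed identity and using $u\ne 0$ leaves $\delta_{\alpha_h\rightarrow\beta}^{(j,\alpha_h)}\, x_{j,\alpha_h}=0$, and since $K$ is an integral domain one of the two factors must vanish, as required.

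There is really no significant obstacle; the heavy lifting was done in Propositions \ref{Form0Start} and \ref{form0}, and the role of $\hc$-invariance in Part 3 is simply to upgrade an implication to an equivalence by guaranteeing $\delta^{(j,\beta)}=0$ whenever $\delta^{(j,\beta)^+}=0$. The only point that deserves care is verifying that all the minors appearing implicitly (through $\delta_{\alpha_h\rightarrow\beta}^{(j,\alpha_h)}$ and the minors $\delta$ to which $\hc$-invariance is applied) indeed satisfy $(i_l,\alpha_l)<(j,\beta)$, which is clear from the hypotheses in each part.
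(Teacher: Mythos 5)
Your proof is correct and follows the same route as the paper, which simply invokes Propositions \ref{Form0Start} and \ref{form0} together with the $\hc$-invariance hypothesis; your write-up just makes explicit that Parts 1 and 2 reduce to the equality $\delta^{(j,\beta)^+}=\delta^{(j,\beta)}$ and that in Part 3 the invariance upgrades the identity of Proposition \ref{form0} to the stated equivalence. Nothing further is needed.
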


\begin{proof} This follows easily from the previous formulae and the 
fact that $X$ is $\hc$-invariant.
\end{proof}

We are now able to prove that the minors of an $\hc$-invariant Cauchon matrix $X$
associated to a Cauchon diagram
$C$ that are equal to zero only depend on the Cauchon diagram $C$ and not on
the matrix $X$ itself. More precisely, we have the following result.

\begin{theo}
\label{independence}
Let $C$ be an $m \times p$ Cauchon diagram. 
Suppose that  $K$ and $L$ are  fields of characteristic $0$.   
Let 
$X=(x_{i,\alpha}) \in \MmpK$ 
and 
$Y=(y_{i,\alpha})  \in \mc_{m,p}(L)$ 
be two matrices. 
Assume that $X$ and $Y$ are both $\hc$-invariant 
Cauchon matrices associated to the same Cauchon diagram $C$.  
 Let $(j,\beta) \in E$, let $\delta= [i_1,\dots,i_l|\alpha_1,\dots,\alpha_l](X)$  be
a minor of $X$ with $(i_l,\alpha_l)  < (j,\beta)$, and let $\Delta=
[i_1,\dots,i_l|\alpha_1,\dots,\alpha_l](Y)$  be the corresponding minor of $Y$.

Then $\delta^{(j,\beta)}=0 $ if and only if $\Delta^{(j,\beta)}=0 $. 
\end{theo}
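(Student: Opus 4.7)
The approach is induction on $(j,\beta) \in E$ with respect to the lexicographic order, exploiting the characterizations already obtained in Proposition \ref{prop:criterion} and Proposition \ref{propPivot}. The key observation is that those characterizations express the vanishing of $\delta^{(j,\beta)^+}$ as a Boolean combination of two kinds of conditions: (i) vanishing of certain entries $x_{j,\beta}$ or $x_{j,\alpha_h}$, which depend only on the Cauchon diagram $C$; and (ii) vanishing of certain minors of $X$ at \emph{earlier} steps of the restoration algorithm, whose last position is strictly less than the corresponding step. Item (ii) is exactly what the inductive hypothesis controls.

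More concretely, the base case $(j,\beta)=(1,2)$ is immediate: $X^{(1,2)}=X$, and the only minor with last position strictly less than $(1,2)$ is $[1|1](X) = x_{1,1}$, whose vanishing is equivalent to $(1,1)\in C$, hence equivalent to the vanishing of $[1|1](Y)=y_{1,1}$. For the inductive step, assume the statement holds at every step up to and including $(j,\beta)\in \Ecirc$, and consider a minor $\delta=[I|\Lambda](X)$ with $(i_l,\alpha_l)<(j,\beta)^+$. I split into two cases. If $(i_l,\alpha_l)=(j,\beta)$, Proposition \ref{propPivot} gives $\delta^{(j,\beta)^+}=\delta_{\widehat{j},\widehat{\beta}}^{(j,\beta)}\cdot x_{j,\beta}$, and since the companion minor has last position $(i_{l-1},\alpha_{l-1})<(j,\beta)$, the inductive hypothesis at step $(j,\beta)$ settles it; the factor $x_{j,\beta}$ depends only on $C$. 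If $(i_l,\alpha_l)<(j,\beta)$, Proposition \ref{prop:criterion} applies in one of its three subcases; in each of these, $\delta^{(j,\beta)^+}=0$ translates to a Boolean combination of $\delta^{(j,\beta)}=0$ (handled directly by the inductive hypothesis at step $(j,\beta)$), of $x_{j,\beta}=0$ or $x_{j,\alpha_h}=0$ (handled by the shared Cauchon diagram), and, in the third subcase, of $\delta_{\alpha_h\rightarrow \beta}^{(j,\alpha_h)}=0$.

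The place to be careful, and what I would identify as the main obstacle, is this last condition on $\delta_{\alpha_h\to \beta}^{(j,\alpha_h)}$: one needs the inductive hypothesis to apply at step $(j,\alpha_h)$ to a minor whose last position is strictly less than $(j,\alpha_h)$. The minor $\delta_{\alpha_h\to \beta}$ has row set $I$ (so maximum row $i_l$) and column set $\Lambda\cup\{\beta\}\setminus\{\alpha_h\}$ (so maximum column $\beta$, since $\alpha_l<\beta$ in this subcase). The last position is therefore $(i_l,\beta)$, and since we are in the subcase $i_l<j$, we have $(i_l,\beta)<(j,1)\leq (j,\alpha_h)$ in lex order. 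Thus $(j,\alpha_h)$ is a step strictly earlier than $(j,\beta)^+$ (in fact earlier than $(j,\beta)$), and the inductive hypothesis applies.

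Assembling the pieces, each of the conditions characterizing $\delta^{(j,\beta)^+}=0$ holds for $X$ if and only if the corresponding condition holds for $Y$, because (i) they share the Cauchon diagram $C$, and (ii) the inductive hypothesis gives equivalence of vanishing of all minors at all steps $\leq (j,\beta)$ whose last position is strictly less than that step. The same reasoning applied to $Y$ yields the symmetric equivalence, and we conclude $\delta^{(j,\beta)^+}=0$ iff $\Delta^{(j,\beta)^+}=0$, completing the induction.
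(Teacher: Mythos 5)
Your proof is correct and follows essentially the same route as the paper's: induction on $(j,\beta)$, with the pivot case handled via Proposition \ref{propPivot} and the case $(i_l,\alpha_l)<(j,\beta)$ split into the three subcases of Proposition \ref{prop:criterion}, using the shared Cauchon diagram for the entry conditions and the (strong) induction hypothesis — in particular at the earlier step $(j,\alpha_h)$ — for the minor conditions. (One inessential slip: when $h<l$ the largest column index of $\delta_{\alpha_h \rightarrow \beta}$ is $\alpha_l$ rather than $\beta$, but your lexicographic comparison only uses $i_l<j$, so the argument stands.)
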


\begin{proof} It is enough to prove that 
$\delta^{(j,\beta)}=0 $ implies that $\Delta^{(j,\beta)}=0$. 

The proof is  by induction on $(j,\beta)$. 
Assume first that  $(j,\beta)=(1,2)$:  we have to prove that if 
$x_{1,1}^{(1,2)}=0$, then  $y_{1,1}^{(1,2)}=0$. 
Assume that  $x_{1,1}^{(1,2)}=0$. 
Then $x_{1,1}=x_{1,1}^{(1,2)}=0$. 
As $X$ is associated to the Cauchon diagram $C$, 
this implies that $(1,1) \in C$. 
As $Y$ is also associated to $C$, 
it follows that $0=y_{1,1}=y_{1,1}^{(1,2)}$, as desired.

Now let $(j,\beta) \in E$ with $(j,\beta) \neq (m,p+1)$, 
and assume the result proved at step $(j,\beta)$. 
Let $\delta= [i_1,\dots,i_l|\alpha_1,\dots,\alpha_l](X)$  
be a minor of $X$ with $(i_l,\alpha_l) < (j,\beta)^+$, and let
$\Delta= [i_1,\dots,i_l|\alpha_1,\dots,\alpha_l](Y)$ be the corresponding minor of
$Y$. Assume that
$\delta^{(j,\beta)^+}=0
$. In order to prove that  
$\Delta^{(j,\beta)^+}=0 $, we consider 
several cases.

\
\\$\bullet$ Assume that $(i_l,\alpha_l) = (j,\beta)$. 
Then it follows from Proposition \ref{propPivot} that 
$0=\delta^{(j,\beta)^+} 
= \delta_{\widehat{j},\widehat{\beta}}^{(j,\beta)} x_{j,\beta}$, 
so that 
$\delta_{\widehat{j},\widehat{\beta}}^{(j,\beta)}=0$ 
or $x_{j,\beta}=0$. 

If $\delta_{\widehat{j},\widehat{\beta}}^{(j,\beta)}=0$, 
then it follows from the induction hypothesis that 
$\Delta_{\widehat{j},\widehat{\beta}}^{(j,\beta)}=0$. 
As $\Delta^{(j,\beta)^+} 
= \Delta_{\widehat{j},\widehat{\beta}}^{(j,\beta)} y_{j,\beta}$, 
by Proposition \ref{propPivot}, 
it follows that 
$\Delta^{(j,\beta)^+} =0$, as required. 

If  $x_{j,\beta}=0$, then $(j,\beta) \in C$ as $X$ is associated to $C$. 
Now, as $Y$ is associated to $C$ as well, we get $y_{j,\beta}=0$, 
and then it follows from Proposition \ref{propPivot} 
that $\Delta^{(j,\beta)^+} =0$, as required.

\
\\$\bullet$ Assume that $(i_l,\alpha_l) < (j,\beta)$. 
We distinguish between three cases 
(corresponding to the three cases of Proposition 
\ref{prop:criterion}). 

\
\\$\bullet \bullet$  Assume that $x_{j,\beta} = 0$. 
As we are assuming that $\delta^{(j,\beta)^+}=0 $, it follows from 
Proposition \ref{prop:criterion} 
that $\delta^{(j,\beta)}=\delta^{(j,\beta)^+}=0 $. 
Hence, we deduce from the induction hypothesis that  $\Delta^{(j,\beta)}=0$. 
On the other hand, as 
$x_{j,\beta} = 0$, we have $(j,\beta)\in C$ and so $y_{j,\beta}=0$.  Thus, it
follows from Proposition \ref{prop:criterion} that 
$\Delta^{(j,\beta)^+}=\Delta^{(j,\beta)}=0 $, as desired.

\
\\$\bullet \bullet$  Assume that $x_{j,\beta} \neq 0$, 
and that $i_l=j$, or that $\beta\in \{\alpha_1,\dots,\alpha_l\}$,
or that $\beta < \alpha_1$. 
As we are assuming that $\delta^{(j,\beta)^+}=0 $, 
it follows from Proposition 
\ref{prop:criterion} that $\delta^{(j,\beta)}=\delta^{(j,\beta)^+}=0 $. 
Hence, we deduce from the induction hypothesis that  $\Delta^{(j,\beta)}=0$. 
On the other hand, as 
$x_{j,\beta} \neq  0$, we have $(j,\beta) \notin C$ and so $y_{j,\beta} \neq 0$. 
Moreover, as  
$i_l=j$,  or $\beta\in \{\alpha_1,\dots,\alpha_l\}$, 
or $\beta < \alpha_1$, 
it follows from Proposition \ref{prop:criterion} that 
$\Delta^{(j,\beta)^+}=\Delta^{(j,\beta)}=0 $, as desired.

\
\\$\bullet \bullet$ Assume that 
$x_{j,\beta} \neq 0$ and $i_l 
< j$, while $\alpha_{h} < \beta < \alpha_{h+1}$ 
for some $h \in \onel$. 
Then as in the previous case, 
$y_{j,\beta} \neq 0$. 
Moreover, it follows from Proposition \ref{prop:criterion} that 
$\delta^{(j,\beta)^+}=0 $ implies $\delta^{(j,\beta)}=0$ and 
either $\delta_{\alpha_h \rightarrow \beta}^{(j,\alpha_h)} =0$ 
or $ x_{j,\alpha_h} =0$. 
Hence, we deduce from the induction hypothesis that $\Delta^{(j,\beta)}=0$ 
and 
either 
$\Delta_{\alpha_h \rightarrow \beta}^{(j,\alpha_h)} =0$ 
or $ y_{j,\alpha_h} =0$. 
Finally, it follows from Proposition \ref{prop:criterion} that 
$\Delta^{(j,\beta)^+}=0$, as desired. 
\end{proof}

In the case where $(j,\beta)=(m,p+1)$, the previous theorem leads to the
following result. (Recall here that $\overline{X}$ is the matrix obtained from
$X$ at the end of the restoration algorithm.)

\begin{cor}
\label{cor:independence}
Retain the notation of the previous theorem. 
Let $I\subseteq \onem$ and $\Lambda \subseteq \onep$ with $|I|=
|\Lambda|$. Then
$[I|\Lambda](\overline{X})=0
$  if and only if $[I|\Lambda](\overline{Y})=0$.
\end{cor}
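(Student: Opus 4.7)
The plan is to obtain this corollary by directly specialising Theorem \ref{independence} to the case $(j,\beta)=(m,p+1)$, since by definition $\overline{X}=X^{(m,p+1)}$ and $\overline{Y}=Y^{(m,p+1)}$.

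First, I would fix a minor $[I|\Lambda]$ with $I=\{i_1<\cdots<i_l\}\subseteq \onem$ and $\Lambda=\{\alpha_1<\cdots<\alpha_l\}\subseteq \onep$. Since $i_l\leq m$ and $\alpha_l\leq p$, we have $(i_l,\alpha_l)\leq (m,p)<(m,p+1)$ in the lexicographic order, so the hypothesis $(i_l,\alpha_l)<(j,\beta)$ of Theorem \ref{independence} is automatically satisfied when $(j,\beta)=(m,p+1)$.

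Now let $\delta=[I|\Lambda](X)$ and $\Delta=[I|\Lambda](Y)$. Applying Theorem \ref{independence} at the step $(j,\beta)=(m,p+1)$ yields $\delta^{(m,p+1)}=0$ if and only if $\Delta^{(m,p+1)}=0$. Since $\delta^{(m,p+1)}=[I|\Lambda](X^{(m,p+1)})=[I|\Lambda](\overline{X})$ and similarly $\Delta^{(m,p+1)}=[I|\Lambda](\overline{Y})$, this is exactly the desired equivalence.

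There is essentially no obstacle here: the corollary is a direct unpacking of the definition of $\overline{X}$ combined with the observation that any minor $[I|\Lambda]$ of an $m\times p$ matrix has its largest index pair bounded by $(m,p)$, which lies strictly below $(m,p+1)$ in the lexicographic order.
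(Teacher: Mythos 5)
Your proof is correct and is precisely the paper's argument: the corollary is stated there as the specialisation of Theorem \ref{independence} to $(j,\beta)=(m,p+1)$, using $\overline{X}=X^{(m,p+1)}$ and the fact that every minor's last index pair satisfies $(i_l,\alpha_l)\leq(m,p)<(m,p+1)$. Nothing further is needed.
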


%%%%%%%%%%%%%%%%%%%%%%%%%%%%%%%%%%%%%%%%%%%%%%%%%%%%%%%%%%%%%%%%%%%%%%

\section{The restoration algorithm and totally nonnegative matrices.}
\label{section:restotnn}

Let $N=(n_{i,\alpha}) \in \mc_{m,p}(\bbR)$ and let  
$\overline{N}$ be the matrix obtained from $N$ 
at the end of the restoration algorithm. 

\begin{theo} \label{Nbarnonneg}
Assume that $N$ is a Cauchon matrix and also that $N$ is 
nonneg\-a\-tive; that is, $n_{i,\alpha} \geq 0$ for all $(i,\alpha)$. 
Then $\overline{N}$ is a totally nonnegative matrix.
\end{theo}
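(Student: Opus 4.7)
The plan is to prove, by induction on $(j,\beta) \in E$ in the lexicographic order, the intermediate statement
\[
(\ast)_{(j,\beta)}: \quad [I|\Lambda](N^{(j,\beta)}) \geq 0 \text{ for every minor with } (\max I,\, \max \Lambda) < (j,\beta).
\]
Since every minor $[I|\Lambda]$ has $(\max I, \max \Lambda) \leq (m,p) < (m, p+1)$, the case $(j,\beta) = (m,p+1)$ of $(\ast)$ will deliver the total nonnegativity of $\overline{N} = N^{(m,p+1)}$.

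The base case $(j,\beta)=(1,2)$ amounts to checking $[1|1](N) = n_{1,1} \geq 0$, which is given. For the inductive step from $(\ast)_{(j,\beta)}$ to $(\ast)_{(j,\beta)^+}$, I consider a minor $\delta = [i_1,\dots,i_l \mid \alpha_1,\dots,\alpha_l](N)$ with $(i_l, \alpha_l) < (j,\beta)^+$ and split on whether $(i_l,\alpha_l) = (j,\beta)$ or $(i_l,\alpha_l) < (j,\beta)$.

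In the first subcase, because $N$ is a Cauchon matrix, Proposition \ref{propPivot} gives
\[
\delta^{(j,\beta)^+} = \delta_{\widehat{j},\widehat{\beta}}^{(j,\beta)} \cdot n_{j,\beta},
\]
and both factors are $\geq 0$: the second by nonnegativity of $N$, the first by $(\ast)_{(j,\beta)}$ applied to the minor whose bottom-right index $(i_{l-1}, \alpha_{l-1})$ is strictly less than $(j,\beta)$ (or which is the empty minor $1$ if $l=1$). In the second subcase, Proposition \ref{Form0Start} handles the situations where $n_{j,\beta}=0$, $i_l=j$, $\beta \in \{\alpha_1,\dots,\alpha_l\}$, or $\beta < \alpha_1$: then $\delta^{(j,\beta)^+} = \delta^{(j,\beta)}$, which is $\geq 0$ by $(\ast)_{(j,\beta)}$. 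Otherwise $n_{j,\beta} > 0$, $i_l < j$, and $\alpha_h < \beta < \alpha_{h+1}$ for a unique $h \in \onel$ (with $\alpha_{l+1}:=p+1$), so Proposition \ref{form0} yields
\[
\delta^{(j,\beta)^+} = \delta^{(j,\beta)} + \delta_{\alpha_h \to \beta}^{(j,\alpha_h)} \cdot n_{j,\alpha_h} \cdot (n_{j,\beta})^{-1}.
\]
Here $\delta^{(j,\beta)} \geq 0$ by $(\ast)_{(j,\beta)}$, while $n_{j,\alpha_h}\geq 0$ and $n_{j,\beta} > 0$ by nonnegativity of $N$, and $\delta_{\alpha_h \to \beta}^{(j,\alpha_h)} \geq 0$ by the (already established, since $(j,\alpha_h) < (j,\beta)$) hypothesis $(\ast)_{(j,\alpha_h)}$.

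I expect the main obstacle to be the bookkeeping in this last subcase: verifying that each ``modified'' minor produced by Proposition \ref{form0} actually lies within the scope of the inductive hypothesis at a strictly earlier step. Fortunately this reduces to the observation that the row index set of $\delta_{\alpha_h \to \beta}^{(j,\alpha_h)}$ is unchanged, so its maximum row is still $i_l < j$; regardless of whether $\max(\Lambda \cup\{\beta\} \setminus\{\alpha_h\})$ equals $\beta$ (when $h=l$) or $\alpha_l$ (when $h<l$), the dominance of the row coordinate in the lexicographic order forces the bottom-right index of this modified minor to be strictly less than $(j,\alpha_h)$, so $(\ast)_{(j,\alpha_h)}$ does apply.
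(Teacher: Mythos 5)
Your proof is correct and follows essentially the same route as the paper: the same induction statement $(*_{j,\beta})$ along $E$, the same case split via Propositions \ref{propPivot}, \ref{Form0Start} and \ref{form0}, with the only difference being that you spell out explicitly the lexicographic bookkeeping (that $\delta_{\alpha_h\to\beta}^{(j,\alpha_h)}$ falls under the earlier statement $(*_{j,\alpha_h})$ because $i_l<j$), which the paper leaves as "easily follows from the induction hypothesis."
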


\begin{proof}
We will prove by induction on $(j,\beta) \in E$ 
that 
\begin{enumerate} \item[]
\begin{enumerate}
\item[$(*_{j,\beta})$]  For any minor $\delta= [i_1,\dots,i_l|\alpha_1, \dots,\alpha_l](N)$ of $N$ with $(i_l,\alpha_l) < (j,\beta)$, we have $\delta^{(j,\beta)} \ge 0$.  \end{enumerate}
\end{enumerate}

Assume first that 
$(j,\beta)=(1,2)$. Then $\delta^{(j,\beta)}= n_{1,1}^{(1,2)}=n_{1,1} \geq 0$, as 
$N$ is nonneg\-a\-tive. 

Now assume that $(j,\beta) \in \Ecirc$, and that $(*_{j,\beta})$ holds.
Let 
$$\delta^{(j,\beta)^+}
=[i_1,\dots,i_l|\alpha_1, \dots,\alpha_l](N^{(j,\beta)^+})$$ 
be a minor of $N^{(j,\beta)^+}$ with $(i_l,\alpha_l) \leq (j,\beta)$. 
We distinguish between two cases in order 
to prove that $\delta^{(j,\beta)^+} \geq 0$. 

First, assume that $(i_l,\alpha_l) = (j,\beta)$. 
Then 
$\delta^{(j,\beta)^+}
=\delta_{\widehat{j},\widehat{\beta}}^{(j,\beta)} n_{j,\beta}$, by 
Proposition \ref{propPivot}. 
As $\delta_{\widehat{j},\widehat{\beta}}^{(j,\beta)}$ is 
nonnegative by the induction hypothesis and 
$n_{j,\beta}$ is nonnegative by assumption, 
it follows that $\delta^{(j,\beta)^+} \geq 0$ in this case.

Next, assume that $(i_l,\alpha_l) < (j,\beta)$. 
Then it follows from Propositions \ref{Form0Start} and  \ref{form0} 
that either $\delta^{(j,\beta)^+}=\delta^{(j,\beta)}$, 
or  $n_{j,\beta} > 0$ and $i_l <j$ 
while 
$$\delta^{(j,\beta)^+} n_{j,\beta}
= \delta^{(j,\beta)} n_{j,\beta}
  + \delta_{\alpha_h \rightarrow \beta}^{(j,\alpha_h)}  
  n_{j,\alpha_h}$$
  for some $h \in \onel$ such that $\alpha_h< \beta<
\alpha_{h+1}$.  In each of the two cases, it easily follows from the induction
hypothesis  and the assumption that $N$ is nonnegative 
that  $\delta^{(j,\beta)^+} \geq 0$, as desired. This completes the induction step.
 
The final case, where $(j,\beta)=(m,p+1)$, shows that 
every minor of $\overline{N}=N^{(m,p+1)}$ is nonnegative, as required.
\end{proof}

\begin{example}
Set $N:=\begin{bmatrix}
1 & 0 & 1 & 1 \\
0 & 0 & 1 & 1 \\
1 & 1 & 1 & 1 \\
1 & 1 & 1 & 1 \\
\end{bmatrix}$. 
 Clearly, $N$ is a nonnegative matrix associated to the Cauchon diagram 
 of Figure \ref{fig:CauchonDiagram2}.
 
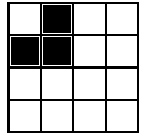
\begin{figure}[h]
\begin{center}
$$\xymatrixrowsep{0.01pc}\xymatrixcolsep{0.01pc}
\xymatrix{
\plc\edge[0,8]\edge[8,0] &&\plc\edge[8,0] &&\plc\edge[8,0]
&&\plc\edge[8,0] &&\plc\edge[8,0]  \\
 & &&\mdblk &&&& && \plc \\
\plc\edge[0,8] &&&&&&&&\plc \\
 &\mdblk &&\mdblk && &&  \\
\plc\edge[0,8] &&&&&&&&\plc \\
 & && && && \\
\plc\edge[0,8] &&&&&&&&\plc \\
 & && && && \\
\plc\edge[0,8] &&\plc &&\plc &&\plc &&\plc 
}$$
\caption{An example of a $4\times 4$ Cauchon diagram}
\label{fig:CauchonDiagram2}
\end{center}
\end{figure}

The previous result shows that  
the matrix $\overline{N}$ is totally nonnegative. 
Six nontrivial steps are needed to compute the matrix $\overline{N}$ 
when  using the restoration algorithm. 
Indeed, here are the detailed calculations:\\
\begin{align}
N^{(2,4)} &=N^{(2,3)}=N^{(2,2)}=N^{(2,1)}=N^{(1,4)}=N^{(1,3)}=N^{(1,2)}
=\begin{bmatrix}
1 & 0 & 1 & 1 \\
0 & 0 & 1 & 1 \\
1 & 1 & 1 & 1 \\
1 & 1 & 1 & 1 \\
\end{bmatrix};  \notag\\
N^{(3,3)} &=N^{(3,2)}=N^{(3,1)}=\begin{bmatrix}
1 & 0 & 2 & 1 \\
0 & 0 & 1 & 1 \\
1 & 1 & 1 & 1 \\
1 & 1 & 1 & 1 \\
\end{bmatrix}; \qquad 
N^{(3,4)}=\begin{bmatrix}
3 & 2 & 2 & 1 \\
1 & 1 & 1 & 1 \\
1 & 1 & 1 & 1 \\
1 & 1 & 1 & 1 \\
\end{bmatrix};  \notag\\
N^{(4,2)} &=N^{(4,1)}=\begin{bmatrix}
4 & 3 & 3 & 1 \\
2 & 2 & 2 & 1 \\
1 & 1 & 1 & 1 \\
1 & 1 & 1 & 1 \\
\end{bmatrix};\qquad 
N^{(4,3)}=\begin{bmatrix}
7 & 3 & 3 & 1 \\
4 & 2 & 2 & 1 \\
2 & 1 & 1 & 1 \\
1 & 1 & 1 & 1 \\
\end{bmatrix}; \notag\\
N^{(4,4)} &=\begin{bmatrix}
10 & 6 & 3 & 1 \\
6 & 4 & 2 & 1 \\
3 & 2 & 1 & 1 \\
1 & 1 & 1 & 1 \\
\end{bmatrix} \quad\text{and}\quad
\overline{N}=N^{(4,5)}=\begin{bmatrix}
11 & 7 & 4 & 1 \\
7 & 5 & 3 & 1 \\
4 & 3 & 2 & 1 \\
1 & 1 & 1 & 1 \\
\end{bmatrix}.  \notag
\end{align}
One can check that $\overline{N}$ is indeed totally nonnegative. Note that it is
only at this last step of the algorithm that a tnn matrix is obtained -- e.g.,
$[1,3,4|1,3,4](N^{(4,4)}) = -4$.
\end{example}

\begin{remark} {\rm A careful analysis of the restoration algorithm 
reveals the following. Suppose that $N$ is a Cauchon matrix with
indeterminates as entries. Then the minors of $\overline{N}$ are 
Laurent polynomials with nonnegative integer coefficients 
in the original indeterminates. 
This suggests a connection with cluster algebras which we intend to
investigate further in a subsequent paper.} 
\end{remark} 

We end this section by
constructing a totally nonnegative
$\hc$-invariant Cauchon matrix associated to each Cauchon diagram. We will also
need analogous $\hc$-invariant Cauchon matrices (although not tnn) over other
fields of characteristic zero, defined as follows.

\begin{defi}
\label{defNC}
Let $K$ be a field of characteristic zero, with transcendence degree at least $mp$
over $\bbQ$. Choose a set $\{
\xi_{i,\alpha} \}$  of $mp$ elements of $K$ that are 
algebraically independent
over $\bbQ$. Moreover, if $K\subseteq \bbR$, choose the $\xi_{i,\alpha}$ to
be positive.

Given any $m \times p$ Cauchon
diagram $C$, denote by $N_C$ the $m \times p$ matrix whose entries
$n_{i,\alpha}$ are defined by $n_{i,\alpha} = \xi_{i,\alpha}$ if $ (i,\alpha)
\notin C$ and $n_{i,\alpha} =0$ if $ (i,\alpha) \in C$.  
\end{defi} 

\begin{theo}
\label{TheTheoTNN}
\begin{enumerate}
\item The matrix $N_C \in \MmpK$ is an $\hc$-invariant Cauchon matrix 
associated to the Cauchon diagram $C$. 
\item If $K=\bbR$ {\rm(}and so all $\xi_{i,\alpha} >0${\rm)}, then
$\overline{N_C}$ is totally nonnegative.
\end{enumerate}
\end{theo}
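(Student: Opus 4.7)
The plan is to prove the two parts in a natural order, with part 2 being essentially a corollary. Part 2 will follow immediately from combining the first half of part 1 with Theorem~\ref{Nbarnonneg}: when $K=\bbR$ and all $\xi_{i,\alpha}>0$, the matrix $N_C$ is by construction a nonnegative matrix whose zero-pattern matches the Cauchon diagram $C$, so it is a nonnegative Cauchon matrix, and $\overline{N_C}$ is totally nonnegative directly by Theorem~\ref{Nbarnonneg}. No further work is required here. Similarly, the first half of part 1 — that $N_C$ is a Cauchon matrix associated to $C$ — is immediate from Definition~\ref{defNC}, since the zero entries of $N_C$ are precisely the positions in $C$.

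The substance of the theorem is therefore the $\hc$-invariance of $N_C$. My strategy is induction on $(j,\beta)\in E$ in the lexicographic order, with the base case $(j,\beta)=(1,2)$ being vacuous (no minors $\delta$ with $(i_l,\alpha_l)<(1,2)$ exist). For the inductive step, fix a minor $\delta=[i_1,\dots,i_l|\alpha_1,\dots,\alpha_l](N_C)$ with $(i_l,\alpha_l)<(j,\beta)$ and compare $\delta^{(j,\beta)}$ with $\delta^{(j,\beta)^+}$ using Propositions~\ref{Form0Start} and~\ref{form0}. All cases covered by Proposition~\ref{Form0Start} (namely $u=0$, or $i_l=j$, or $\beta\in\{\alpha_1,\dots,\alpha_l\}$, or $\beta<\alpha_1$) give $\delta^{(j,\beta)^+}=\delta^{(j,\beta)}$, so the required implication is automatic.

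The only genuine case is that of Proposition~\ref{form0}: $u=x_{j,\beta}=\xi_{j,\beta}\neq 0$, $i_l<j$, and $\alpha_h<\beta<\alpha_{h+1}$ for some $h$, where
\[
\delta^{(j,\beta)^+} = \delta^{(j,\beta)} + \delta_{\alpha_h\to\beta}^{(j,\alpha_h)}\, x_{j,\alpha_h}\, u^{-1}.
\]
Here I would use the algebraic independence of $\{\xi_{k,\gamma}:(k,\gamma)\notin C\}$ over $\bbQ$. By Proposition~\ref{ObservationDeletingDerivation}(4), each entry $x_{i,\alpha}^{(r)}$ is a Laurent polynomial over $\bbQ$ in the variables $\xi_{k,\gamma}$ with $(i,\alpha)<(k,\gamma)<r$ and $(k,\gamma)\notin C$. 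Consequently $\delta^{(j,\beta)}$ is a Laurent polynomial in $\{\xi_{k,\gamma}:(k,\gamma)<(j,\beta),\,(k,\gamma)\notin C\}$, so does \emph{not} involve $\xi_{j,\beta}$. The same inspection, applied to the entries $x^{(j,\alpha_h)}_{i_s,\beta}$ (for which $(i_s,\beta)<(k,\gamma)<(j,\alpha_h)<(j,\beta)$) and $x_{j,\alpha_h}$, shows that $\delta_{\alpha_h\to\beta}^{(j,\alpha_h)} x_{j,\alpha_h}$ is likewise a Laurent polynomial not involving $\xi_{j,\beta}$. Multiplying the displayed equation by $u=\xi_{j,\beta}$ yields
\[
\xi_{j,\beta}\,\delta^{(j,\beta)} + \delta_{\alpha_h\to\beta}^{(j,\alpha_h)}\, x_{j,\alpha_h} \;=\; \xi_{j,\beta}\,\delta^{(j,\beta)^+},
\]
an identity in the Laurent polynomial ring $\bbQ[\xi_{k,\gamma}^{\pm 1}:(k,\gamma)\notin C]$. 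Assuming $\delta^{(j,\beta)^+}=0$ in $K$, algebraic independence forces the right-hand side to vanish identically in these variables; comparing coefficients of $\xi_{j,\beta}^{1}$ on the left then forces $\delta^{(j,\beta)}=0$, completing the induction.

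The main obstacle I anticipate is the bookkeeping needed to confirm that none of $\delta^{(j,\beta)}$, $\delta_{\alpha_h\to\beta}^{(j,\alpha_h)}$, or $x_{j,\alpha_h}$ involves the variable $\xi_{j,\beta}$; this is a direct consequence of Proposition~\ref{ObservationDeletingDerivation}(4) together with the strict inequalities $i_s\le i_l<j$ and $\alpha_h<\beta$, but it must be checked carefully for the ``new'' column entries $x^{(j,\alpha_h)}_{i_s,\beta}$ appearing in $\delta_{\alpha_h\to\beta}^{(j,\alpha_h)}$. Once this dependence structure is established, the algebraic-independence step is essentially a degree argument in a single variable.
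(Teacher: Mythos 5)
Your proposal is correct and follows essentially the same route as the paper: part 2 via Theorem \ref{Nbarnonneg}, the Cauchon property directly from Definition \ref{defNC}, and $\hc$-invariance by combining the formula of Proposition \ref{form0} with Proposition \ref{ObservationDeletingDerivation}(4) and a degree-in-$\xi_{j,\beta}$ argument using algebraic independence. (The induction framing is harmless but unnecessary, since the $\hc$-invariance condition at each step $(j,\beta)$ is verified without appeal to earlier steps — exactly as in the paper.)
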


\begin{proof}
Part 2 holds by Theorem \ref{Nbarnonneg}.

1. Let $(j,\beta) \in \Ecirc$ and  let 
$\delta= [i_1,\dots,i_l|\alpha_1, \dots,\alpha_l](N_C)$ be a minor of $N_C$ such
that
$(i_l,\alpha_l) < (j,\beta)$. Assume that $ \delta ^{(j,\beta)^+}=0$. We need to
prove that $\delta^{(j,\beta)}=0$.  If $ \delta ^{(j,\beta)}= \delta
^{(j,\beta)^+}$,  then there is nothing to do; so  assume that 
$ \delta ^{(j,\beta)} \neq \delta ^{(j,\beta)^+}$. 
In the notation of Proposition \ref{form0}, 
$u=n_{j,\beta} =\xi_{j,\beta} \neq 0$ and 
$i_l <j$ while 
$\alpha_h < \beta < \alpha_{h+1}$ for some $h \in \onel$; moreover   
$$
\delta^{(j,\beta)^+} = \delta^{(j,\beta)}
  + \delta_{\alpha_h \rightarrow \beta}^{(j,\alpha_h)}\, 
 n_{j,\alpha_h} u^{-1}.
  $$
Hence,   
  $$
0= \delta^{(j,\beta)} 
  + \delta_{\alpha_h \rightarrow \beta}^{(j,\alpha_h)} \,
 n_{j,\alpha_h} \, \xi_{j,\beta}^{-1} \,.
  $$
In order  to conclude, recall from   
Proposition \ref{ObservationDeletingDerivation}
that each $n_{\ia}^{(j,\beta)} =n_{\ia} +Q_{\ia}$
where $Q_{\ia}$ is a Laurent polynomial with coefficients in $\bbQ$ 
in the nonzero $n_{k,\gamma}$ such that  $(\ia) < (k,\gamma) < (j,\beta)$, and
that each $n_{\ia}^{(j,\alpha_h)} =n_{\ia} +Q'_{\ia}$ 
where $Q_{\ia}$ is a Laurent polynomial with coefficients in $\bbQ$ 
in the nonzero $n_{k,\gamma}$ such that  $(\ia) < (k,\gamma) < (j,\alpha_h)$. 
Hence, $\delta^{(j,\beta)}$  and $\delta_{\alpha_h \rightarrow
\beta}^{(j,\alpha_h)}\, 
 n_{j,\alpha_h}$ are Laurent polynomials in the 
 $\xi_{k,\gamma}$ such that $(k,\gamma) < (j,\beta)$. 
It follows that  $\delta^{(j,\beta)}=0$, 
 as desired, because 
the $\xi_{i,\alpha}$ are algebraically independent over $\bbQ$.
\end{proof}

The minors which vanish on the tnn matrices $\overline{N_C}$ will be identified, in
terms of Poisson $\hc$-primes of $\pmmpc$, in the following section.

%%%%%%%%%%%%%%%%%%%%%%%%%%%%%%%%%%%%%%%%%%%%%%%%%%%%%%%%

\section{The restoration algorithm and 
Poisson $\hc$-prime ideals of $\pmmpc$.}

In this section, we investigate the standard Poisson structure of the
coordinate ring $\pmmpc$ that comes 
from the commutators of $\mmpc$ (see Section
\ref{sectionPoisson}). Recall, from Section \ref{section:PoissonHprimes}, 
that the
number of Poisson $\hc$-primes in $\pmmpc$ is the same as the number of $m
\times p$ Cauchon diagrams. In this section, we use the restoration algorithm
to construct an explicit bijection between the set of $m \times p$ Cauchon
diagrams and the set of Poisson $\hc$-primes of $\pmmpc$. As a corollary, we
will attach to each Poisson $\hc$-prime an $\hc$-invariant Cauchon 
matrix.  This is an essential step in order to describe the
admissible families of minors.

Let $C$ be a $m \times p$ Cauchon diagram. Denote by $A_C$ the following commutative
polynomial algebra over $\bbC$ in $mp - | C |$ indeterminates: 
$$A_C:=\bbC[t_{\ia} \mid (\ia) \in \left( \onemxp \right)
\setminus C] .$$ 
In the sequel, it will be convenient to set $t_{\ia}:=0$ when
$(\ia) \in C$. While $A_C$ can be identified with a subalgebra of $\pmmpc$, we
label its indeterminates $t_{\ia}$ rather than $Y_{\ia}$ because we require $A_C$
to have a different Poisson structure than $\pmmpc$, as follows.

There is a unique Poisson bracket on $A_C$ determined by the following data:
$$
\{ t_{\ia}, t_{k,\gamma} \}:= \left\{ \begin{array}{ll}
t_{\ia}t_{k,\gamma} &  \mbox{ if } i=k \mbox{ and } \alpha < \gamma \\
t_{\ia}t_{k,\gamma} &  \mbox{ if } i< k \mbox{ and } \alpha = \gamma \\
0 &   \mbox{ if } i < k \mbox{ and } \alpha \ne \gamma\,. \\
\end{array} \right.
$$
Denote by $L_C$ the corresponding Laurent polynomial algebra; that is, 
$$L_C:=\bbC[t_{\ia}^{\pm 1} \mid (\ia) \in \left( \onemxp \right) \setminus C] .$$
The Poisson bracket defined on $A_C$ extends uniquely to a Poisson bracket on 
the algebra $L_C$, so that $L_C$ is also a Poisson algebra.
Denote the field of fractions of $A_C$  by $G_C$. 
The Poisson bracket on $A_C$ extends uniquely to a 
Poisson bracket on $G_C$; so that $G_C$ is also a Poisson algebra.

Observe that the torus $\hc:=(\bbC^{\times})^{m+p}$ acts by Poisson automorphisms on
$A_C$ such that
$$(a_1,\dots,a_m,b_1,\dots,b_p).t_{\ia} = a_i b_\alpha t_{\ia}$$
for all $(a_1,\dots,a_m,b_1,\dots,b_p) \in \hc$ and $(\ia)\in \gc 1,m \dc \times \gc
1,p \dc$. This Poisson action extends naturally 
to Poisson automorphism actions of $\hc$ on $L_C$ and $G_C$.

Set $M_C:=(t_{\ia}) \in \mc_{m,p}(G_C)$; 
this is a Cauchon matrix associated to the Cauchon diagram $C$. 
For all $(j,\beta) \in E$, set 
$$M_C^{(j,\beta)}:=(t_{\ia}^{(j,\beta)}) \in \mc_{m,p}(G_C);$$ 
that is,   $M_C^{(j,\beta)}$ is 
the matrix obtained 
from $M_C$ at step $(j,\beta)$ of the restoration algorithm. 
Let $A_C^{(j,\beta)}$ be the subalgebra of $G_C$ generated 
by the entries of $M_C^{(j,\beta)}$.

\begin{theo}
\label{theoPoissonrestoration}
Let $(j,\beta) \in E$. 
\begin{enumerate}
\item $\mathrm{Frac}(A_{C}^{(j,\beta)})=G_C$.

\item For all $(\ia) \in \onemxp$, we
have $t_{\ia}^{(j,\beta)} =t_{\ia} +Q_{\ia}^{(j,\beta)}$, where
$Q_{\ia}^{(j,\beta)}$ is a Laurent polynomial with coefficients in $\bbQ$ in the
nonzero $t_{k,\gamma}$ such that $(\ia) < (k,\gamma) < (j,\beta)$.

\item Let $B_C^{(j,\beta)}$ be the subalgebra of $G_C$ generated by the
$t_{\ia}^{(j,\beta)}$ with $(\ia) < (j,\beta)$.  If $t_{j,\beta} \neq 0$,
then the powers $t_{j,\beta}^k$, with $k \in \bbN\cup\{0\}$, are linearly
independent over
$B_C^{(j,\beta)}$.

\item If $(i,\alpha),(k,\gamma) \in \onemxp$, then 
$$
\{t_{\ia}^{(j,\beta)} ,t_{k,\gamma}^{(j,\beta)} \}
=\begin{cases}
t_{\ia}^{(j,\beta)}t_{k,\gamma}^{(j,\beta)} 
&  \mbox{ if } i=k \mbox{ and } \alpha < \gamma \\[1ex]
t_{\ia}^{(j,\beta)} t_{k,\gamma}^{(j,\beta)} 
&  \mbox{ if } i< k \mbox{ and } \alpha = \gamma \\[1ex]
0 &   \mbox{ if } i < k \mbox{ and } \alpha > \gamma \\[1ex]
2 t_{i,\gamma}^{(j,\beta )} t_{k,\alpha}^{(j,\beta )} 
&  \mbox{ if } i< k  \mbox{, } \alpha < \gamma 
\mbox{ and } (k,\gamma ) < (j,\beta ) \\[1ex]
0 &  \mbox{ if } i< k  \mbox{, } \alpha < \gamma 
\mbox{ and } (k,\gamma ) \geq (j,\beta ) \,. 
\end{cases}
$$

\item $(a_1, \cdots ,a_m, b_1, \cdots , b_p). t_{\ia}^{(j,\beta)} = a_i b_\alpha
t_{\ia}^{(j,\beta)}$  for all $(a_1, \cdots ,a_m, b_1, \cdots , b_p) \in \hc$ and
$(i,\alpha) \in
\onemxp$.

\end{enumerate}
\end{theo}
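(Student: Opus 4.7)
The plan is to prove statements (1)--(5) simultaneously by induction on $(j,\beta) \in E$ in the lexicographic order. The base case $(j,\beta) = (1,2)$ is immediate, since $M_C^{(1,2)} = M_C$: then $A_C^{(1,2)} = A_C$, so (1)--(5) reduce to the defining properties of $A_C$, the Poisson bracket on $A_C$, and the torus action on $A_C$ recorded in Section 5 above. For the inductive step, I will fix $(j,\beta) \in \Ecirc$ and assume (1)--(5) at step $(j,\beta)$. If the pivot $t_{j,\beta}^{(j,\beta)}$ vanishes then $M_C^{(j,\beta)^+} = M_C^{(j,\beta)}$ and there is nothing to check. Otherwise Proposition \ref{ObservationDeletingDerivation}(1) gives $t_{j,\beta}^{(j,\beta)} = t_{j,\beta}$, and the restoration formula modifies only the entries with $i<j$ and $\alpha<\beta$ via
$$t_{i,\alpha}^{(j,\beta)^+} = t_{i,\alpha}^{(j,\beta)} + t_{i,\beta}^{(j,\beta)}\,t_{j,\beta}^{-1}\,t_{j,\alpha}^{(j,\beta)}.$$

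Statements (1), (2), (3), and (5) should be light. Statement (2) at $(j,\beta)^+$ is exactly Proposition \ref{ObservationDeletingDerivation}(4) applied to $X = M_C$. For (1), I will invert the above formula using Observation \ref{deleting} to obtain $A_C^{(j,\beta)^+} \subseteq A_C^{(j,\beta)}[t_{j,\beta}^{-1}]$ and the reverse inclusion, so that both subalgebras share $G_C$ as their field of fractions by the inductive case. For (3), (2) shows that every generator of $B_C^{(j,\beta)^+}$ is a Laurent polynomial in indeterminates $t_{k,\gamma}$ with $(k,\gamma) < (j,\beta)^+$, so the next indeterminate (when nonzero) is transcendental over $B_C^{(j,\beta)^+}$. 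Statement (5) is a weight check: by the induction on (5), the correction term $t_{i,\beta}^{(j,\beta)} t_{j,\beta}^{-1} t_{j,\alpha}^{(j,\beta)}$ carries torus weight $(a_i b_\beta)(a_j b_\beta)^{-1}(a_j b_\alpha) = a_i b_\alpha$, matching that of $t_{i,\alpha}^{(j,\beta)}$.

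The principal obstacle will be (4). I will verify the piecewise bracket formula at step $(j,\beta)^+$ by case analysis according to which of $(i,\alpha)$ and $(k,\gamma)$ sits ``below threshold'' (strictly above and to the left of the pivot, in which case the entry transforms) and which sits ``above threshold'' (unchanged). When both sit above threshold, the inductive formula at $(j,\beta)$ applies and one checks that the index constraints force the auxiliary entries $t_{i,\gamma}^{(j,\beta)^+}$ and $t_{k,\alpha}^{(j,\beta)^+}$ in the proposed right-hand side to also be unchanged, so the two formulas automatically agree. The delicate situation is $(k,\gamma)=(j,\beta)$: there the inductive formula gives $\{t_{i,\alpha}^{(j,\beta)}, t_{j,\beta}\}=0$, but the asserted formula at $(j,\beta)^+$ produces $2 t_{i,\beta}^{(j,\beta)^+} t_{j,\alpha}^{(j,\beta)^+}$. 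A direct Leibniz expansion of $\{t_{i,\alpha}^{(j,\beta)^+}, t_{j,\beta}\}$ will ``wake up'' this bracket: the terms $\{t_{i,\beta}^{(j,\beta)}, t_{j,\beta}\} = t_{i,\beta}^{(j,\beta)} t_{j,\beta}$ and $\{t_{j,\alpha}^{(j,\beta)}, t_{j,\beta}\} = t_{j,\alpha}^{(j,\beta)} t_{j,\beta}$ from (4) at step $(j,\beta)$ will combine, after multiplication by $t_{j,\beta}^{-1}$, into precisely $2 t_{i,\beta}^{(j,\beta)} t_{j,\alpha}^{(j,\beta)}$.

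The heaviest subcase is when both $(i,\alpha)$ and $(k,\gamma)$ sit below threshold. There I will expand $\{t_{i,\alpha}^{(j,\beta)^+}, t_{k,\gamma}^{(j,\beta)^+}\}$ by the Leibniz rule into nine brackets at step $(j,\beta)$ pairing subsets of $\{t_{i,\alpha}^{(j,\beta)}, t_{i,\beta}^{(j,\beta)}, t_{j,\alpha}^{(j,\beta)}\}$ with subsets of $\{t_{k,\gamma}^{(j,\beta)}, t_{k,\beta}^{(j,\beta)}, t_{j,\gamma}^{(j,\beta)}\}$, each multiplied by suitable powers of $t_{j,\beta}^{\pm 1}$. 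Using the inductive formula for (4), most of these brackets either vanish (whenever the ``$(k,\gamma) \ge (j,\beta)$'' case applies) or simplify to monomials, and the remaining contributions should cancel in pairs or combine to the asserted right-hand side after subcase splitting on the relative orders of $i,k$ versus $j$ and of $\alpha,\gamma$ versus $\beta$. I do not expect a conceptual surprise here, only careful bookkeeping; the bulk of the technical work of the proof will sit in this case analysis.
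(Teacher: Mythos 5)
Your overall strategy is the same as the paper's: parts (1), (2), (3), (5) are dispatched exactly as in the paper (via Proposition \ref{ObservationDeletingDerivation} and easy inductions), and for part (4) the paper also argues by induction on $(j,\beta)$ with a Leibniz-rule case analysis (this is the content of Appendix \ref{appendixPoisson}); your treatment of the delicate pair $(k,\gamma)=(j,\beta)$ with nonzero pivot is correct and coincides with the fifth case of the paper's equation (A.5).

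There is, however, one genuine gap: your claim that when the pivot $t_{j,\beta}$ vanishes ``there is nothing to check''. The matrix is indeed unchanged, $M_C^{(j,\beta)^+}=M_C^{(j,\beta)}$, but the \emph{formula} being proved changes from step $(j,\beta)$ to step $(j,\beta)^+$: the pair $(k,\gamma)=(j,\beta)$ with $i<j$, $\alpha<\beta$ moves from the branch ``$(k,\gamma)\ge(j,\beta)$'', where the bracket is asserted to be $0$, to the branch ``$(k,\gamma)<(j,\beta)^+$'', where it is asserted to equal $2\,t_{i,\beta}^{(j,\beta)^+}t_{j,\alpha}^{(j,\beta)^+}$. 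Since the entries are unchanged, the bracket is still $0$ by induction, so you must prove that $t_{i,\beta}^{(j,\beta)^+}\,t_{j,\alpha}^{(j,\beta)^+}=0$ whenever $t_{j,\beta}=0$, $i<j$, $\alpha<\beta$. This is not automatic and is exactly where the hypothesis that $C$ is a \emph{Cauchon} diagram enters: $t_{j,\beta}=0$ means $(j,\beta)\in C$, so either every box strictly to its left or every box strictly above it is black, i.e.\ either $t_{j,\alpha}=0$ for all $\alpha<\beta$ or $t_{k,\beta}=0$ for all $k\le j$; Proposition \ref{ObservationDeletingDerivation}(1)--(3) then propagates these zeroes to step $(j,\beta)^+$, giving the required vanishing. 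Note that your proposal never invokes the Cauchon condition in part (4) at all, which is a warning sign: for an arbitrary zero-pattern the asserted bracket formula is false (take $t_{j,\beta}=0$ with $t_{i,\beta},t_{j,\alpha}\ne0$), so any correct proof must use it, and this zero-pivot case is the only place it is needed. With that case repaired, the remaining bookkeeping in your nonzero-pivot analysis is the same computation the paper carries out in Appendix \ref{appendixPoisson}.
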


\begin{proof}
1. This is an easy induction on $(j,\beta) \in E$ (recall Observations
\ref{deleting}).

2. This is part 4 of Proposition \ref{ObservationDeletingDerivation}. 
 
3. This claim easily follows from the previous part 
and the fact that the
powers $t_{j,\beta}^k$ are linearly independent over the subalgebra of $G_C$
generated by the $t_{\ia}$ with $(\ia) < (j,\beta)$. 
 
4. We relegate the proof of this part to Appendix \ref{appendixPoisson}, due to the large number of cases to be checked.
 
5. This is an easy induction and is left to the reader.
\end{proof}

For each Cauchon diagram $C$, we thus obtain from the restoration algorithm a
Poisson algebra 
$A'_C:=A_C^{(m,p+1)}$ generated by $mp$ elements $y_{\ia}:=t_{\ia}^{(m,p+1)}$ such that, for all $(\ia) < (k,\gamma)$, 
we have: 
$$\{y_{\ia} ,y_{k,\gamma} \}=\left\{ \begin{array}{ll}
y_{\ia}y_{k,\gamma} &  \mbox{ if } i=k \mbox{ and } \alpha < \gamma \\
y_{\ia} y_{k,\gamma} &  \mbox{ if } i< k \mbox{ and } \alpha = \gamma \\
0 &   \mbox{ if } i < k \mbox{ and } \alpha > \gamma \\
2 y_{i,\gamma} y_{k,\alpha} &  \mbox{ if } i< k  
\mbox{ and } \alpha < \gamma  \,. \\ 
\end{array} \right.$$
Hence, there exists a surjective Poisson homomorphism 
$\varphi_C : \pmmpc \rightarrow A'_C$ that sends 
$Y_{\ia}$ to $y_{\ia}$ for all $(\ia)$. 
Moreover, we deduce from Theorem \ref{theoPoissonrestoration} 
that this homomorphism is $\hc$-equivariant, so that 
the kernel $J'_C$ of $\varphi_C$ is a Poisson $\hc$-prime of $\pmmpc$. 

Recall the notation $\mathcal{C}_{m,p}$ for the set of all $m\times p$ Cauchon
diagrams.

\begin{lem}
The map $C \mapsto J'_C$ is an embedding of $\mathcal{C}_{m,p}$ into the set of 
Poisson $\hc$-primes of $\pmmpc$.
\end{lem}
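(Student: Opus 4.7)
The plan is to recover the Cauchon diagram $C$ from the Poisson prime $J'_C$, which immediately gives injectivity of the map $C \mapsto J'_C$. The key observation is that, although $C$ is defined as the zero pattern of the ``unrestored'' matrix $M_C=(t_{\ia})$, this matrix can be reconstructed from its restored version $(y_{\ia})$ via the deleting derivations algorithm, which is the inverse of the restoration algorithm (see Observation \ref{deleting}).

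More precisely, by construction $\varphi_C:\pmmpc\to A'_C$ induces a Poisson algebra isomorphism $\overline{\varphi}_C:\pmmpc/J'_C\xrightarrow{\sim} A'_C$ sending $Y_{\ia}+J'_C$ to $y_{\ia}$, and by Theorem \ref{theoPoissonrestoration}(1) the fraction field of $A'_C$ equals $G_C$. Starting from the matrix $(y_{\ia})=(t_{\ia}^{(m,p+1)})$ in $G_C$ and running the deleting derivations algorithm backward from step $(m,p+1)$ to step $(1,2)$ returns the matrix $(t_{\ia})$. This exhibits each $t_{\ia}$ as an explicit rational expression in the generators $y_{\kg}$.

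Now suppose $J'_C=J'_{C'}$ for two Cauchon diagrams $C,C'\in\mathcal{C}_{m,p}$. The composition $\psi:=\overline{\varphi}_{C'}\circ\overline{\varphi}_C^{-1}:A'_C\to A'_{C'}$ is an algebra isomorphism identifying the two generating sets $\{y_{\ia}\}$, and it extends to an isomorphism of fraction fields $G_C\to G_{C'}$. Because the deleting derivations algorithm is given by rational formulas in the current matrix entries (with the pivot-zero branch being simply the identity), a step-by-step induction on $(j,\beta)\in E$ shows that $\psi(t_{\ia}^C)=t_{\ia}^{C'}$ for every $(\ia)$. The only delicate point at each step is the pivot check, but vanishing of elements is preserved by the isomorphism $\psi$, so the two runs of the algorithm branch identically.

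Consequently, for all $(\ia)\in\onemxp$ one has $(\ia)\in C$ iff $t_{\ia}^C=0$ iff $t_{\ia}^{C'}=0$ iff $(\ia)\in C'$, whence $C=C'$. The only real obstacle is this pivot-preservation bookkeeping, which amounts to the simple remark that an algebra isomorphism sends zero to zero, and thus the rational formulas of Observation \ref{deleting} commute with $\psi$ throughout the algorithm.
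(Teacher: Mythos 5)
Your proposal is correct and follows essentially the same route as the paper: from $J'_C=J'_{C'}$ one gets a Poisson isomorphism $A'_C\to A'_{C'}$ matching the generators $y_{\ia}$, extends it to the fraction fields, and a decreasing induction through the (deleting derivations) steps shows it carries $t_{\ia}^{(j,\beta)}$ to the corresponding entries for $C'$, so the zero patterns at step $(1,2)$, i.e.\ the diagrams, coincide. Your remark that the isomorphism preserves vanishing of the pivots, so both runs of the algorithm branch identically, is exactly the point implicit in the paper's induction.
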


\begin{proof}
Let $C$ and $C'$ be two Cauchon diagrams, and assume that 
$J'_{C}=J'_{C'}$. In order to avoid any confusion, we will denote 
the natural generators of $A'_{C'}$
by $y'_{\ia}$ rather than  $y_{\ia}$. 
As $J'_{C}=J'_{C'}$, there exists a Poisson isomorphism 
$\psi : A'_{C} \rightarrow A'_{C'}$ that sends $y_{\ia}$ to $y'_{\ia}$ 
for all $(\ia)$. Of course, this isomorphism extends to an isomorphism 
$\psi : \mathrm{Frac}(A'_{C}) \rightarrow \mathrm{Frac}(A'_{C'})$, 
and a descreasing induction on $(j,\beta)$ shows  that 
$\psi (t_{\ia}^{(j,\beta)})={t'}_{\ia}^{(j,\beta)}$ for all $(\ia)\in \onemxp$ 
and $(j,\beta)\in E$. In particular, we get:
$$
(\ia) \in C \ \quad\Longleftrightarrow\quad \ t_{\ia}^{(1,2)} 
=0 \ \quad\Longleftrightarrow\quad \ {t'}_{\ia}^{(1,2)}=0 \ 
\quad\Longleftrightarrow\quad \ (\ia) \in C'.
$$
Hence, $C=C'$, as desired.
\end{proof}

\begin{theo}
\label{theo:PoissonHspectrum}
$\hc$-$\mathrm{PSpec}(\pmmpc)
=\{ J'_C \mid C \in \mathcal{C}_{m,p} \}$.
\end{theo}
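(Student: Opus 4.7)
The plan is to observe that this theorem is essentially a pigeonhole argument, combining the injectivity result of the preceding Lemma with the cardinality count of $\hc$-$\mathrm{PSpec}(\pmmpc)$ already established in Section \ref{section:PoissonHprimes}.

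First, I would recall from the preceding Lemma that the assignment $C \mapsto J'_C$ is a well-defined injection from $\mathcal{C}_{m,p}$ into $\hc$-$\mathrm{PSpec}(\pmmpc)$, so the left-hand side contains the right-hand side. The kernels $J'_C$ are Poisson $\hc$-primes because they arise as kernels of surjective $\hc$-equivariant Poisson homomorphisms $\varphi_C$ onto Poisson domains $A'_C$ (the latter follows from parts 4 and 5 of Theorem \ref{theoPoissonrestoration}), and the injectivity comes from recovering $C$ from $J'_C$ via the restoration-algorithm generators.

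Next, I would invoke the counting result from Section \ref{section:PoissonHprimes}: the set $\hc$-$\mathrm{PSpec}(\pmmpc)$ is finite, and its cardinality equals $|\mathcal{S}|$ (by the parametrisation $w \mapsto J_w$ obtained from \cite[Proposition 4.8]{Good2006} combined with \cite[Theorem 0.4]{bgy}). Moreover, by \cite[Corollary 1.5]{lau2}, $|\mathcal{S}|=|\mathcal{C}_{m,p}|$. Hence both $\mathcal{C}_{m,p}$ and $\hc$-$\mathrm{PSpec}(\pmmpc)$ are finite sets of the same cardinality.

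Finally, any injection between two finite sets of equal cardinality is a bijection, so the image $\{J'_C \mid C \in \mathcal{C}_{m,p}\}$ exhausts $\hc$-$\mathrm{PSpec}(\pmmpc)$, proving the theorem. There is no real obstacle here since the substantive work (constructing $\varphi_C$, verifying the Poisson and $\hc$-equivariance properties, establishing injectivity, and counting Poisson $\hc$-primes) has been carried out in the preceding results; the theorem is the immediate consequence of combining them.
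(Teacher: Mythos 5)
Your proposal is correct and follows essentially the same route as the paper: the inclusion $\{J'_C\} \subseteq \hc$-$\mathrm{PSpec}(\pmmpc)$ comes from the construction of $J'_C$ as the kernel of the $\hc$-equivariant Poisson surjection $\varphi_C$, injectivity of $C \mapsto J'_C$ is the preceding lemma, and the count $|\hc\text{-}\mathrm{PSpec}(\pmmpc)| = |\mathcal{C}_{m,p}|$ from Section \ref{section:PoissonHprimes} forces the inclusion to be an equality. This is exactly the paper's pigeonhole argument.
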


\begin{proof}
We have just proved that 
$$
\hc \mbox{-}\mathrm{PSpec}(\pmmpc) 
\supseteq \{ J'_C \mid C \in \mathcal{C}_{m,p} \}.
$$ 
In order to conclude, 
recall from the discussion in 
Section \ref{section:PoissonHprimes}   
that the number of Poisson $\hc$-primes in $\pmmpc$ 
is equal to $|\mathcal{C}_{m,p}|$. In view of the previous lemma, the
displayed inclusion must be an equality.
\end{proof}

\begin{theo}
\label{theo:PoissonMatrix}
Let $C $ be an $m \times p$ Cauchon diagram. 
\begin{enumerate}

\item The matrix 
$M_C =(t_{\ia}) \in \mc_{m,p}(G_C)$ is an $\hc$-invariant 
Cauchon matrix associated to $C$.

\item A minor $[I|\Lambda]$ belongs to $J'_C$ if and only if the 
corresponding minor of $\overline{M_C}:=M_C^{(m,p+1)}$ is zero.

\end{enumerate}
\end{theo}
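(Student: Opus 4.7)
The plan is to prove part 1 by observing that $M_C$ is a Cauchon matrix by construction, then deducing $\hc$-invariance via an algebraic independence argument based on Theorem \ref{theoPoissonrestoration}. Part 2 will then follow almost immediately from the definition of $\varphi_C$.

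For part 1, that $M_C$ is a Cauchon matrix associated with $C$ is clear from the defining convention $t_{\ia}=0\Leftrightarrow (\ia)\in C$. To verify $\hc$-invariance, I would fix $(j,\beta)\in \Ecirc$ and a minor $\delta=[i_1,\dots,i_l|\alpha_1,\dots,\alpha_l](M_C)$ with $(i_l,\alpha_l)<(j,\beta)$ such that $\delta^{(j,\beta)^+}=0$. By Propositions \ref{Form0Start} and \ref{form0}, one of two possibilities holds: either $\delta^{(j,\beta)^+}=\delta^{(j,\beta)}$ (and we are done), or $t_{j,\beta}\neq 0$, $i_l<j$, some $h$ satisfies $\alpha_h<\beta<\alpha_{h+1}$, and
\[
\delta^{(j,\beta)^+} \;=\; \delta^{(j,\beta)} + \delta_{\alpha_h \to \beta}^{(j,\alpha_h)}\, t_{j,\alpha_h}\, t_{j,\beta}^{-1}.
\]
Hence in $G_C$ we get the identity $\delta^{(j,\beta)}\, t_{j,\beta} = -\delta_{\alpha_h \to \beta}^{(j,\alpha_h)}\, t_{j,\alpha_h}$.

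Next I would invoke Theorem \ref{theoPoissonrestoration}(2) to check that every entry of $M_C^{(j,\beta)}$ contributing to $\delta^{(j,\beta)}$, and every entry of $M_C^{(j,\alpha_h)}$ contributing to $\delta_{\alpha_h\to\beta}^{(j,\alpha_h)}$, is a Laurent polynomial with rational coefficients in the nonzero indeterminates $t_{k,\gamma}$ with $(k,\gamma)<(j,\beta)$. A short index check confirms this: in the first minor, all entries sit at positions $(i_k,\alpha_s)$ with $i_k\le i_l\le j$ and (using $(i_l,\alpha_l)<(j,\beta)$) each $(i_k,\alpha_s)<(j,\beta)$; in the second minor, $i_k<j$ forces $(i_k,q)<(j,1)\le (j,\alpha_h)<(j,\beta)$ for every column $q$ that appears, and the factor $t_{j,\alpha_h}$ has index strictly below $(j,\beta)$ as well. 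Both sides of the displayed identity therefore lie in the Laurent polynomial ring $\bbQ[t_{k,\gamma}^{\pm 1} : (k,\gamma)<(j,\beta),\ (k,\gamma)\notin C]$, and neither involves $t_{j,\beta}$. Since the nonzero $t_{k,\gamma}$'s are algebraically independent over $\bbQ$ by definition of $A_C$, the indeterminate $t_{j,\beta}$ is transcendental over that Laurent ring, so an identity of the form $\delta^{(j,\beta)}\cdot t_{j,\beta}=(\text{expression free of }t_{j,\beta})$ forces $\delta^{(j,\beta)}=0$, as required.

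For part 2, I would use that the Poisson homomorphism $\varphi_C\colon \pmmpc\to A'_C$ sends $Y_{\ia}$ to $y_{\ia}=t_{\ia}^{(m,p+1)}$, which is the $(\ia)$-entry of $\overline{M_C}$. Since each minor $[I|\Lambda]$ is a determinantal polynomial in the generators $Y_{\ia}$, its image under $\varphi_C$ is obtained by substituting the entries of $\overline{M_C}$, giving exactly the corresponding minor of $\overline{M_C}$. Therefore $[I|\Lambda]\in J'_C=\ker\varphi_C$ if and only if this minor of $\overline{M_C}$ vanishes. The main obstacle is the index bookkeeping in part 1 to ensure that both sides of the key identity genuinely lie in a Laurent ring in indeterminates other than $t_{j,\beta}$; once this is verified, the algebraic independence of the $t_{k,\gamma}$ closes the argument cleanly.
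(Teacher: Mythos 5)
Your proposal is correct and follows essentially the same route as the paper: part 1 is exactly the algebraic-independence argument by which the paper proves Theorem \ref{TheTheoTNN}(1) (the paper simply cites that theorem, since $M_C$ is an instance of the matrices $N_C$ over $K=G_C$), and part 2 is the paper's observation that $J'_C=\ker\varphi_C$ with $\varphi_C([I|\Lambda])=[I|\Lambda](\overline{M_C})$. The only nitpick is the phrase claiming ``both sides'' of $\delta^{(j,\beta)}t_{j,\beta}=-\delta_{\alpha_h\to\beta}^{(j,\alpha_h)}t_{j,\alpha_h}$ are free of $t_{j,\beta}$ (the left side is not, as written), but your concluding sentence uses the identity correctly, so this is a wording issue rather than a gap.
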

\begin{proof} The first part follows from 
Theorem \ref{TheTheoTNN}. 
The second part is a consequence of the construction of the 
Poisson $\hc$-prime $J'_C$ as the kernel of the 
surjective $\hc$-equivariant Poisson  homomorphism 
$\varphi_C : \pmmpc \rightarrow A'_C$ that sends $Y_{\ia}$ 
to $y_{\ia}$ for all $(\ia)$.
\end{proof}

\begin{cor}
\label{minorsvanishNCbar}
Let $C$ be an $m\times p$ Cauchon diagram, and construct the matrix $N_C \in
\mc_{m,p}(\bbR)$ as in Definition {\rm\ref{defNC}}. Then $\overline{N_C} =
N_C^{(m,p+1)}$ is a tnn matrix, and the minors which vanish on $\overline{N_C}$ are
precisely those which belong to the ideal $J'_C$ of $\pmmpc$.
\end{cor}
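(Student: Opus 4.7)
The plan is to assemble the corollary directly from three results that have already been established in the paper, with essentially no new calculation required.

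First, I would dispose of the total nonnegativity claim: since $N_C$ is defined with $\xi_{i,\alpha}>0$ when $K=\bbR$, Theorem \ref{TheTheoTNN}(2) immediately gives that $\overline{N_C}$ is tnn. This is just a matter of invoking the named result.

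Next, for the identification of the vanishing minors, I would observe that we have two $\hc$-invariant Cauchon matrices associated to the \emph{same} Cauchon diagram $C$: namely $N_C\in \mc_{m,p}(\bbR)$ (by Theorem \ref{TheTheoTNN}(1)) and $M_C\in \mc_{m,p}(G_C)$ (by Theorem \ref{theo:PoissonMatrix}(1)). Since $\bbR$ and $G_C$ are both fields of characteristic zero, Corollary \ref{cor:independence} applies and yields that, for any admissible row and column sets,
\[
[I|\Lambda](\overline{N_C})=0 \iff [I|\Lambda](\overline{M_C})=0.
\]

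Finally, Theorem \ref{theo:PoissonMatrix}(2) identifies the right-hand side with membership in $J'_C$: a minor $[I|\Lambda]$ belongs to $J'_C$ precisely when the corresponding minor of $\overline{M_C}$ vanishes. Chaining the two equivalences gives the desired characterisation, completing the proof.

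The only potential subtlety is a bookkeeping one, namely making sure that the hypothesis of Corollary \ref{cor:independence} is legitimately met across two different base fields; but Definition \ref{defNC} was set up precisely so that $N_C$ is an $\hc$-invariant Cauchon matrix with exactly the same zero/nonzero pattern as $M_C$, and Theorem \ref{independence} (which underlies the corollary) only depends on that combinatorial pattern. So there is no real obstacle here — the corollary is essentially a one-line consequence of already-proved theorems.
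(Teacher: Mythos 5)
Your proposal is correct and follows exactly the paper's own proof, which simply cites Theorems \ref{TheTheoTNN} and \ref{theo:PoissonMatrix} together with Corollary \ref{cor:independence}. The only detail you flag — applying Corollary \ref{cor:independence} across the two characteristic-zero fields $\bbR$ and $G_C$ for the matrices $N_C$ and $M_C$ associated to the same diagram $C$ — is indeed legitimate and is precisely how the paper uses that corollary.
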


\begin{proof} Theorems \ref{TheTheoTNN} and \ref{theo:PoissonMatrix}, and Corollary
\ref{cor:independence}.
\end{proof}

%%%%%%%%%%%%%%%%%%%%%%%%%%%%%%%%%%%%%%%%%%%%%%%%%%%%%%%%%%%%%%%%%%%%%%%%%%%%%%%%%%%%

\section{Explicit description of the admissible families of minors.}

Recall that a family of minors is admissible if it defines a 
nonempty totally nonnegative cell. We are now ready to prove our main result that gives an explicit description of the admissible families of minors. 

Recall the families $\mc(w)$ with $w \in \mathcal{S}$ defined in
Definition \ref{definition-M(w)}. Our work so far immediately shows that these define nonempty tnn cells, as follows.

\begin{lem}  \label{Mwadmissible}
For each $w\in \mathcal{S}$, the tnn cell $S_{\mc(w)}$ is nonempty.
\end{lem}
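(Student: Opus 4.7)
The plan is to produce an explicit tnn matrix lying in $S_{\mc(w)}$ by combining the restoration algorithm with the identification of Poisson $\hc$-primes.

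Fix $w \in \mathcal{S}$. First I invoke Theorem \ref{theo:PoissonHspectrum}, which says that every Poisson $\hc$-prime of $\pmmpc$ is of the form $J'_C$ for some Cauchon diagram $C$; in particular, there is a Cauchon diagram $C = C(w)$ with $J_w = J'_C$. Then I form the associated matrix $N_C \in \mc_{m,p}(\bbR)$ of Definition \ref{defNC}, taking the $\xi_{i,\alpha}$ to be positive real numbers that are algebraically independent over $\bbQ$, and I run the restoration algorithm to obtain $\overline{N_C}$.

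By Corollary \ref{minorsvanishNCbar}, $\overline{N_C}$ is totally nonnegative, and the minors vanishing on $\overline{N_C}$ are precisely the minors lying in the Poisson $\hc$-prime $J'_C$. Since $J'_C = J_w$, Theorem \ref{theo:PoissonMinors} identifies this set of minors with $\mc(w)$. Hence the set of minors vanishing on $\overline{N_C}$ is exactly $\mc(w)$, which means $\overline{N_C} \in S_{\mc(w)}$, and the cell is nonempty.

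There is no serious obstacle, since all the machinery has been built: Corollary \ref{minorsvanishNCbar} already packages the fact that $\overline{N_C}$ is tnn with the identification of its vanishing minors, and the only additional ingredient is the surjectivity of $C \mapsto J'_C$ onto $\hc$-$\mathrm{PSpec}(\pmmpc)$, supplied by Theorem \ref{theo:PoissonHspectrum}, together with Theorem \ref{theo:PoissonMinors} to translate $J_w$ into the combinatorial set $\mc(w)$.
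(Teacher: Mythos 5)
Your argument is correct and follows exactly the paper's own proof: identify $\mc(w)$ with the minors in $J_w$ via Theorem \ref{theo:PoissonMinors}, use Theorem \ref{theo:PoissonHspectrum} to find a Cauchon diagram $C$ with $J_w=J'_C$, and conclude via Corollary \ref{minorsvanishNCbar} that $\overline{N_C}$ is a tnn matrix whose vanishing minors are precisely $\mc(w)$, hence lies in $S_{\mc(w)}$. No gaps and no difference in method.
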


\begin{proof} It follows from Theorem
\ref{theo:PoissonMinors} that there exists a (unique) Poisson $\hc$-prime
ideal $J_w$ in $\pmmpc$ such that the minors that belong to $J_w$ are exactly those
from $\mc(w)$. Moreover, it follows from Theorem \ref{theo:PoissonHspectrum}
that there exists a Cauchon diagram $C$ such that $J_w =J'_C$. By Corollary
\ref{minorsvanishNCbar}, the minors that vanish on the totally nonnegative matrix
$\overline{N_C}$ are exactly those in
$\mc(w)$. Therefore, $\overline{N_C} \in S_{\mc(w)}$.
\end{proof}

\begin{theo}
\label{TheoDescription}
The admissible families of minors for the space $\mmptnn$ of $m\times p$
totally nonnegative matrices are exactly the families $\mc(w)$ for $w \in
\mathcal{S} = S^{[-p,m]}_{m+p}$.\end{theo}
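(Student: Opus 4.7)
The plan is to prove the two inclusions separately. Lemma \ref{Mwadmissible} already establishes that $\mc(w)$ is admissible for every $w \in \mathcal{S}$, and Corollary \ref{Mwcontain} shows that the assignment $w \mapsto \mc(w)$ is injective. Thus it suffices to prove the converse: every admissible family of minors equals $\mc(w)$ for some $w \in \mathcal{S}$.

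Fix an arbitrary tnn matrix $M \in \mmptnn$ and let $\mathcal{F}_M$ denote the set of minors vanishing on $M$. Since $M$ is tnn, it is in particular a Cauchon matrix, so the deleting derivations algorithm described in Observations \ref{deleting} applies to $M$, viewed as $M^{(m,p+1)}$, yielding matrices $M^{(j,\beta)}$ for $(j,\beta)\in E$ and terminating at $\widetilde{M} := M^{(1,2)}$. The first step is to verify that $\widetilde{M}$ is itself a Cauchon matrix, whose associated diagram we denote $C_M$; this should follow from an induction on $(j,\beta)$ that tracks the zero pattern through the deleting derivations steps, exploiting the tnn condition on $M$.

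The heart of the argument is to show that $\widetilde{M}$ is an $\hc$-invariant Cauchon matrix in the sense of the definition preceding Proposition \ref{prop:criterion}. Granted this, Theorem \ref{TheTheoTNN} tells us that $N_{C_M}$ is another $\hc$-invariant Cauchon matrix with the same diagram $C_M$, so Corollary \ref{cor:independence} yields
\[
[I|\Lambda](M) = [I|\Lambda](\overline{\widetilde{M}}) = 0 \iff [I|\Lambda](\overline{N_{C_M}}) = 0.
\]
By Corollary \ref{minorsvanishNCbar}, the right-hand condition describes precisely the minors in the Poisson $\hc$-prime $J'_{C_M}$, and by Theorems \ref{theo:PoissonHspectrum} and \ref{theo:PoissonMinors}, $J'_{C_M} = J_w$ for a unique $w \in \mathcal{S}$, with the set of minors in $J_w$ equal to $\mc(w)$. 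Therefore $\mathcal{F}_M = \mc(w)$, as required.

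The main obstacle is establishing the $\hc$-invariance of $\widetilde{M}$, namely that $\delta^{(j,\beta)^+} = 0$ implies $\delta^{(j,\beta)} = 0$ for every relevant minor $\delta$ of $\widetilde{M}$. The key tool will be the sum formula in Proposition \ref{form0}: when applied to the tnn matrix $M = \overline{\widetilde{M}}$, every minor appearing in that formula evaluates to a nonnegative real number, so the vanishing of a sum of such terms should force each term to vanish individually, yielding the desired implication. The argument should proceed by reverse induction on $(j,\beta)$, in the spirit of the induction used in the proof of Theorem \ref{Nbarnonneg}, and it is here that the nonnegativity hypothesis is used in an essential way.
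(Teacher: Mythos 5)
Your top-level architecture is sound and is, in substance, the paper's own: Lemma \ref{Mwadmissible} and Corollary \ref{Mwcontain} handle one direction, and for the converse you run the deleting derivations algorithm on an arbitrary tnn matrix $M$, compare the output $\widetilde{M}$ with $N_{C_M}$ via Theorem \ref{TheTheoTNN} and Corollary \ref{cor:independence}, and identify the vanishing minors with some $\mc(w)$ through Corollary \ref{minorsvanishNCbar} and Theorems \ref{theo:PoissonHspectrum} and \ref{theo:PoissonMinors}. (The paper packages this last step as a counting argument, Corollary \ref{bound} together with the distinctness of the $\mc(w)$, but that difference is cosmetic.) The genuine gap is the step you defer: that the output of the deleting derivations algorithm applied to a tnn matrix is a nonnegative, $\hc$-invariant Cauchon matrix. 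This is precisely Theorem \ref{deletingderivationstnn} and its corollary in Appendix B, it is the technical heart of the converse inclusion, and your sketch does not establish it.

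Concretely, two things go wrong. First, the claim that in the formula of Proposition \ref{form0} ``every minor appearing evaluates to a nonnegative real number'' because $M$ is tnn is false as stated: $\delta^{(j,\beta)}$ and $\delta^{(j,\alpha_h)}_{\alpha_h\to\beta}$ are minors of \emph{intermediate} matrices of the algorithm, and these are not tnn -- the paper's example following Theorem \ref{Nbarnonneg} has $[1,3,4|1,3,4](N^{(4,4)})=-4$ even though the final matrix is tnn. The nonnegativity you actually need holds only for minors whose last position precedes the current step, and it comes from the increasing induction inside the proof of Theorem \ref{Nbarnonneg} -- which is available only once you already know that $\widetilde{M}$ is a nonnegative Cauchon matrix. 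Second, proving that $\widetilde{M}$ and all intermediate matrices are nonnegative and Cauchon is not a matter of ``tracking the zero pattern'': already the nonnegativity of a single updated entry at step $(j,\beta)$ requires the nonnegativity of a $2\times 2$ minor of the previous intermediate matrix, so the decreasing induction must carry the much stronger hypotheses that suitable submatrices of every $X^{(j,\beta)}$ remain totally nonnegative (parts 3 and 4 of Theorem \ref{deletingderivationstnn}); the paper proves these using the Pl\"ucker-type identity \eqref{plucker} from Ando and Muir, Ando's result that a tnn matrix with a vanishing principal minor is singular, and Lemma \ref{form0lemmaBIS}. Until you supply an argument of this strength (or explicitly invoke Appendix B), the converse inclusion remains unproved.
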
 

\begin{proof}
We already know that the number of nonempty totally nonnegative cells is less
than or equal to $|\mathcal{S}|$ by Corollary \ref{bound}.
Note that Postnikov, in \cite{postnikov}, has shown that this
is in fact an equality, but, in Appendix B, we prove this inequality via different
methods. We thus recover the equality by our methods, as a consequence of the
present theorem.

By Corollary \ref{Mwcontain}, the sets $\mc(w)$, for $w\in \mathcal{S}$,
are all distinct. We conclude by invoking Lemma \ref{Mwadmissible}. 
\end{proof}

Notice that the families of minors $\mc(w)$ are not that easy to
compute. Let us mention, however, that the results of the present paper provide
also an algorithmic way to produce these families. Indeed, it follows from the
proof of Corollary \ref{minorsvanishNCbar} that the admissible families of minors
are exactly the families of vanishing minors of $\overline{M_C}$ with $C \in
\mathcal{C}_{m,p}$. Hence, we have the following algorithm that, starting only
from a Cauchon diagram, constructs an admissible family of minors. \\

  \begin{alg}\label{admissiblealgorithm} $  $\\
\underline{\bf Input:}
\begin{enumerate}
\item[]
Fix $C \in \mathcal{C}_{m,p}$,  and denote by
$A_C$ the following commutative polynomial algebra over $\bbC$ generated by $mp -
| C|$ indeterminates: 
$$A_C:=\bbC[t_{\ia} \mid (\ia) \in \left( \onemxp \right) \setminus C] .$$ 
Let $G_C$
denote the field of fractions of
$A_C$. 
\end{enumerate}

\noindent\underline{\bf Step 1: Restoration of $\overline{M_C}$.} \quad
As $(j,\beta)$ runs over the set $E$, define matrices $M_C^{(j,\beta)}
=(t_{\ia}^{(j,\beta)}) \in
\mc_{m,p}(G_C)$ as follows:

\begin{enumerate}
\item \underline{If $(j,\beta)=(1,2)$}, then the entries of the matrix 
$M_C=M_C^{(1,2)}$ are defined by 
$$t_{\ia} := t_{\ia}^{(1,2)}:= \left\{ \begin{array}{ll}
t_{\ia} & \mbox{ if } (\ia) \notin C\\
0 & \mbox{ otherwise.}
\end{array} \right.$$
\item \underline{Assume that $(j,\beta) \in \Ecirc$} and that the matrix
$M_C^{(j,\beta)}$ is  already known. The entries $t_{\ia}^{(j,\beta)^+}$ of the
matrix $M_C^{(j,\beta)^+}$ are defined as follows:
\begin{enumerate}
\item If $t_{j,\beta} =0$, then
$t_{\ia}^{(j,\beta)^+}= t_{\ia}^{(j,\beta)}$ for all $(\ia) \in \gc 1,m \dc
\times \gc 1,p \dc$. 
\item If $t_{j,\beta} \neq 0$ and $(\ia) \in
\gc 1,m \dc \times \gc 1,p \dc$, then 
$$t_{\ia}^{(j,\beta)^+}= \begin{cases} t_{\ia}^{(j,\beta)}+t_{i,\beta}^{(j,\beta)}
t_{j,\beta}^{-1} t_{j,\alpha}^{(j,\beta)} & \mbox{if }i
<j \mbox{ and } \alpha < \beta \\ 
t_{\ia}^{(j,\beta)} & \mbox{otherwise.} \end{cases}$$ 
\end{enumerate}
\end{enumerate}

\noindent\underline{\bf Step 2: Calculate all minors of $\overline{M_C}= M_C^{(m,p+1)}$.}$ $
\\

\noindent\underline{\bf Result:}
\begin{enumerate}
\item[] Denote by $\mc(C)$ the following set of minors:
$$\mc(C):= \{ [I| \Lambda] \ | \ [I| \Lambda](\overline{M_C})=0 \}.$$
Then, $\mc(C)$ is an admissible family of minors and, 
if $C$ and $C'$ are two distinct Cauchon diagrams, 
then $\mc(C) \neq \mc(C')$.
\end{enumerate}
\end{alg}

\begin{example} Assume that $m=p=3$.
\begin{figure}[h]
\begin{center}
$$\xymatrixrowsep{0.01pc}\xymatrixcolsep{0.01pc}
\xymatrix{
\plc\edge[0,6]\edge[6,0] &&\plc\edge[6,0] &&\plc\edge[6,0]
&&\plc\edge[6,0]   \\
 &\mdblk && && \mdblk &&  \plc \\
\plc\edge[0,6] &&&&&&\plc \\
 &\mdblk &&\mdblk &&   \\
\plc\edge[0,6] &&&&&&\plc \\
 & && &&  \\
\plc\edge[0,6] &&\plc &&\plc &&\plc  
}$$
\caption{A $3 \times 3$ Cauchon diagram}
\label{fig:CauchonDiagram4}
\end{center}
\end{figure}
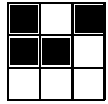

Applying Algorithm \ref{admissiblealgorithm} to the Cauchon
diagram  of Figure \ref{fig:CauchonDiagram4} shows that the family of minors 
$$
\bigl\{ [1 |3]\,, [1,2|1,2]\,,  [1,3|1,2]\,, [2,3|1,2]\,, [2,3| 1,3]\,, 
[2,3| 2,3]\,, [1,2,3| 1,2,3] \bigr\}
$$ 
is admissible. As is easily checked, the above admissible set equals $\mc(w)$ where $w= \begin{bmatrix} 1&2&3&4&5&6\\ 1&4&3&2&6&5 \end{bmatrix}$.
\end{example}

%%%%%%%%%%%%%%%%%%%%%%%%%%%%%%%%%%%%%%%%%%%%%%%%%%%%%%%%%%%%%%%%%

\section{Tnn cells and $\hc$-orbits of symplectic leaves.}

One can construct another partition of the space $\mmptnn$ using the $\hc$-orbits of symplectic leaves in $\Mmpc$ as follows.
For each restricted permutation $w \in \mathcal{S}$, we set: 
\[
U_w:=
\mathcal{P}_{\woN w} \cap \mmptnn.
\]
As the $\hc$-orbits of symplectic leaves $\mathcal{P}_{\woN w}$ form a
partition of $\mc_{m,p}(\bbC)$, the sets $U_w$ with $w \in
\mathcal{S}$ form a partition of $\mmptnn$. 

On the other hand, we know that the nonempty totally nonnegative cells also form 
a partition of this space $\mmptnn$ and are parametrised
by the same set $\mathcal{S}$ of permutations.  For each $w \in \mathcal{S}$, we
have a nonempty tnn cell denoted by
$\mathcal{S}_{\mc(w)}$ in the notation of equation \eqref{deftnncell}, so
that one can write:
\begin{equation}
\label{2partition}
\bigsqcup_{w \in \mathcal{S}}
U_w=\mmptnn = \bigsqcup_{w \in \mathcal{S}}
\mathcal{S}_{\mc(w)}.
\end{equation}
 Thus we have two partitions of the same space $\mmptnn$ indexed by the same set $\mathcal{S}$. 
Our final theorem asserts that these two partitions are the same. We first give a proof based on our present methods, and then we sketch an alternate proof that follows ideas and results in the literature.
 
\begin{theo}  \label{tnnHsymp}
For each $w \in \mathcal{S}$, we have $\mathcal{S}_{\mc(w)}=U_{w}$. 
Thus, the nonempty tnn cells in the space $\mmptnn$ of tnn
matrices are the intersections of the $\hc$-orbits of symplectic leaves in $\Mmpc$
with $\mmptnn$.
\end{theo}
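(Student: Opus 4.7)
The plan is to prove the single inclusion $U_w \subseteq \mathcal{S}_{\mc(w)}$ for each $w \in \mathcal{S}$; the reverse inclusion then comes for free, because both families partition $\mmptnn$ --- the $U_w$ by construction (see \eqref{2partition}), and the $\mathcal{S}_{\mc(w)}$ by Theorem~\ref{TheoDescription} combined with the general tnn cell partition --- and disjointness on each side together with a common index set $\mathcal{S}$ forces the one-sided inclusion to be an equality.

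To carry out the inclusion, I would fix $w \in \mathcal{S}$ and $x \in U_w$. The partition of $\mmptnn$ by tnn cells supplies a unique $z \in \mathcal{S}$ with $x \in \mathcal{S}_{\mc(z)}$; by the definition of a tnn cell, the set of minors vanishing at $x$ is then exactly $\mc(z)$. The task reduces to showing $z = w$, which I would accomplish by deriving the two inequalities $w \leq z$ and $z \leq w$ in $\mathcal{S}$ separately.

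For the first inequality, note that $x \in \mathcal{P}_{\woN w} \subseteq \overline{\mathcal{P}_{\woN w}}$, so Theorem~\ref{theo:descriptionclosure} forces every minor in $\mc(w)$ to vanish at $x$, giving $\mc(w) \subseteq \mc(z)$; Corollary~\ref{Mwcontain} then yields $w \leq z$. For the second inequality, every minor of $\mc(z)$ vanishes at $x$ by choice of $z$, so Theorem~\ref{theorem-4.2reformulation} places $x$ inside $\overline{\mathcal{P}_{\woN z}}$. Since distinct $\hc$-orbits of symplectic leaves in $\Mmpc$ are disjoint and the closure of any single orbit is a union of orbits, the entire leaf-orbit $\mathcal{P}_{\woN w}$ through $x$ must be contained in $\overline{\mathcal{P}_{\woN z}}$; thus $\overline{\mathcal{P}_{\woN w}} \subseteq \overline{\mathcal{P}_{\woN z}}$, whence $\mc(z) \subseteq \mc(w)$, and another application of Corollary~\ref{Mwcontain} delivers $z \leq w$.

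The hard part will be keeping the two order-reversing conventions straight --- left multiplication by $\woN$ reverses the Bruhat order, and closure containment among the orbits $\mathcal{P}_{\woN w}$ corresponds to the opposite of the ordering on $\mathcal{S}$ --- but both are already packaged in Corollary~\ref{Mwcontain}, so there is no serious new work: all the deep content lives in Theorems~\ref{theorem-4.2reformulation}, \ref{theo:descriptionclosure}, and \ref{TheoDescription}, and the present proof is essentially a bookkeeping exercise assembling them.
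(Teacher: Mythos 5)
Your proof is correct and uses essentially the same ingredients and strategy as the paper: the paper establishes the single inclusion $\mathcal{S}_{\mc(w)}\subseteq U_w$ (by contradiction, picking a witnessing minor in $\mc(z)\setminus\mc(w)$) and then concludes from the two partitions in \eqref{2partition}, whereas you establish the mirror inclusion $U_w\subseteq\mathcal{S}_{\mc(w)}$ directly via the two Bruhat inequalities, relying on the same results (Theorems \ref{theorem-4.2reformulation} and \ref{theo:descriptionclosure}, Corollary \ref{Mwcontain}, and the orbit-closure decomposition from \cite[Theorem 3.13]{bgy}). This is the same bookkeeping run in the opposite direction, and every step you invoke is available at this point in the paper.
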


\begin{proof}
The theorem follows easily from (\ref{2partition}) once we show that
$\mathcal{S}_{\mc(w)} \subseteq U_{w}$ for all $w\in \mathcal{S}$, given
that each $\mathcal{S}_{\mc(w)}$ is nonempty (Lemma
\ref{Mwadmissible}).

Let $w\in \mathcal{S}$. Because of Theorem \ref{theorem-4.2reformulation}, we at
least have
$$\mathcal{S}_{\mc(w)} \subseteq  \overline{\mathcal{P}_{\woN w}} \cap
\mmptnn.$$
If $\mathcal{S}_{\mc(w)} \nsubseteq U_{w}$, it thus follows from
\cite[Theorem 3.13]{bgy} that there exists a matrix
$$x\in \mathcal{S}_{\mc(w)} \cap \mathcal{P}_{\woN z}$$
for some $z\in \mathcal{S}$ with $z>w$. By Corollary \ref{Mwcontain},
$\mc(w)$ is properly contained in $\mc(z)$, so there is a minor
$[I|\Lambda] \in \mc(z) \setminus \mc(w)$. Since $x\in
\mathcal{P}_{\woN z}$, Theorem \ref{theorem-4.2reformulation} shows that
$[I|\Lambda](x) =0$. However, this contradicts the assumption that $x\in
\mathcal{S}_{\mc(w)}$, since $[I|\Lambda] \notin \mc(w)$. Therefore
$\mathcal{S}_{\mc(w)} \subseteq U_{w}$, as required.
\end{proof}

\begin{subsec}
 {\bf Alternate proof of Theorem \ref{tnnHsymp}.}
Keep $m$ and $p$ fixed as usual, and set $n=m+p$.

We begin in the Grassmannian setting. Let $\Grtnn$ denote the $m\times n$ case of Postnikov's \emph{totally nonnegative Grassmannian} \cite[Definition 3.1]{postnikov}, a subset of the real Grassmannian $\GrmnR$. It consists of those points represented by matrices $A\in \mc_{m,n}(\bbR)$ such that $\Delta_I(A) \ge 0$ for all $m$-element subsets $I\subseteq \onen$, where $\Delta_I := [1,\dots,m|I]$ is the maximal minor with column index set $I$. The \emph{tnn Grassmann cells} \cite[Definition 3.2]{postnikov} are subsets $\StnnM \subseteq \Grtnn$, for collections $\mc$ of $m$-element subsets of $\onen$, defined as follows: $\StnnM$ consists of those points represented by matrices $A$ such that $\Delta_I(A) >0$ for all $I\in \mc$ and $\Delta_I(A) =0$ for all $I\notin \mc$. (Postnikov only defined $\StnnM$ for matroids $\mc$, but the definition works equally well in general. It is easily seen that if $\StnnM$ is nonempty, then $\mc$ must be a matroid.)
 
Lusztig has introduced nonnegative parts of real generalized flag varieties, one case of which leads to the Grassmannian. Let $G= GL_n(\bbR)$, with its usual Borel subgroups $B^+$ and $B^-$, consisting of invertible upper, respectively lower, triangular matrices. Let $W$ be the Weyl group of $G$, which we identify with both the symmetric group $S_n$ and the subgroup of permutation matrices in $G$. Then set
$$P^J= \begin{bmatrix} GL_m(\bbR) &\mc_{m,p}(\bbR)\\ 0 &GL_p(\bbR) \end{bmatrix},$$
a standard parabolic subgroup of $G$ containing $B^+$. Let $\pc^J= G/P^J$ denote the corresponding partial flag variety, and $\pi^J: G/B^+ \rightarrow \pc^J$ the natural projection. Lusztig first defined the nonnegative part of the full flag variety $G/B^+$ (\cite[\S8.8]{L1}, \cite[\S2.6]{L2}), and then defined the nonnegative part of $\pc^J$ as the projection: $\pc^J_{\ge0} := \pi^J\bigl( (G/B^+)_{\ge0} \bigr)$. 

Under the standard identification of $\GrmnR$ with $\pc^J$, we have
$$\Grtnn= \pc^J_{\ge0}.$$
This is tacitly assumed in the discussion of \cite[Theorem 3.8]{postnikov}; details have been worked out by Rietsch \cite{Rpers}.

Now partition $G/B^+$ into intersections of dual Schubert cells, namely the sets $\rc_{v,w} := B^-.vB^+\cap B^+.wB^+$ for $v,w\in W$. Then $\rc_{v,w} \ne \varnothing$ precisely when $v\le w$ (this is implicit in \cite{KL1, KL2}, as noted in \cite[\S1.3]{L2}; in the case of an algebraically closed base field, it is proved explicitly in \cite[Corollary 1.2]{deo}). This partition of $G/B^+$ projects onto a partition of $\pc^J$ into sets which Rietsch labelled
$$\pc^J_{x,u,w} := \pi^J(\rc_{x,wu})= \pi^J(\rc_{xu^{-1},w})$$
for $(x,u,w)\in W^J_{\max}\times W_J\times W^J$, with $\pc^J_{x,u,w} \ne\varnothing$ if and only if $x\le wu$ \cite[Section 5]{Rie}. Here $W^J_{\max}$ denotes the set of maximal length representatives for cosets in $W/W_J$, where $W_J$ is the Weyl group of the standard Levi factor of $P^J$. Now contract these sets to the nonnegative part of $\pc^J$, to get sets $\pc^J_{x,u,w;>0} := \pc^J_{x,u,w} \cap \pc^J_{\ge0}$ for $(x,u,w)\in W^J_{\max}\times W_J\times W^J$. Rietsch proved that the nonempty strata $\pc^J_{x,u,w}$ of $\pc^J$ all have nonempty intersection with the nonnegative part of $\pc^J$, namely $\pc^J_{x,u,w;>0} \ne\varnothing$ if and only if $x\le wu$ \cite[Section 6, p. 783]{Rie}. 

The nonempty sets $\pc^J_{x,u,w;>0}$ partition $\pc^J_{\ge0}$, and this partition coincides with Postnikov's partition of $\Grtnn$ into nonempty tnn Grassmann cells \cite[Theorem 3.8]{postnikov}. 

In order to discuss symplectic leaves, we have to move into the complex setting. Let us denote the complex versions of the above ingredients with hats.
 Thus, $\Ghat= GL_n(\bbC)$, with its usual Borel subgroups $\Bhat^{\pm}$, while
$$\Phat^J= \begin{bmatrix} GL_m(\bbC) &\mc_{m,p}(\bbC)\\ 0 &GL_p(\bbC) \end{bmatrix},$$
with corresponding partial flag variety $\pchat^J= \Ghat/\Phat^J$ partitioned into subsets $\pchat^J_{x,u,w}$. We identify the maximal torus $\Bhat^+ \cap \Bhat^-$ with the group $\hc= (\bbC^\times)^n$. Identify $\pc^J$ with its natural image in $\pchat^J$; then $\pc^J_{x,u,w}= \pchat^J_{x,u,w} \cap \pc^J$ for all $(x,u,w)$.

There is a standard Poisson structure on $\Ghat$, arising from the standard $r$-matrix on $\mathfrak{gl}_n(\bbC)$, making $\Ghat$ into a Poisson algebraic group (cf.~\cite[\S1.4]{bgy}). The Grassmannian $\GrmnC$ then becomes a Poisson variety and a Poisson homogeneous space for $\Ghat$ \cite[Proposition 3.2]{bgy}. Further, the torus $\hc$ acts on $\GrmnC$ by Poisson automorphisms \cite[\S0.2]{GY}, and so it permutes the symplectic leaves. It turns out that the $\hc$-orbits of symplectic leaves in $\GrmnC$ coincide with the nonempty sets $\pchat^J_{x,u,w}$ of Rietsch's partition \cite[\S0.4]{GY}. Therefore,
\begin{enumerate}
\item[(I)] The nonempty tnn Grassmann cells $\StnnM$ are precisely the intersections of $\Grtnn$ with the $\hc$-orbits of symplectic leaves in $\GrmnC$.
\end{enumerate}

Let $\Omega$ denote the top Schubert cell in $\GrmnC$, namely the set of points corresponding to matrices $A$ for which $\Delta_{\onem}(A) \ne 0$; then $\Omega= \Bhat^-.\Phat^J$ under the identification $\GrmnC= \pchat^J$. Postnikov has given an isomorphism between $\mmptnn$ and $\Omega \cap \Grtnn$ \cite[Proposition 3.10]{postnikov}, which we modify slightly. For matrices $X= (x_{ij}) \in \Mmpc$, define $\Xtil= \bigl( (-1)^{m-i}x_{m+1-i,j} \bigr)$. There is an isomorphism $\xi: \Mmpc \rightarrow \Omega$ sending $X$ to the point representated by the block matrix $\begin{bmatrix} I_m &\Xtil \end{bmatrix}$. As is easily seen, the maximal minors of $\begin{bmatrix} I_m &\Xtil \end{bmatrix}$ coincide with the minors of $X$ (with no changes of sign). Hence,
\begin{enumerate}
\item[(II)] $\xi$ restricts to an isomorphism of $\mmptnn$ onto $\Omega\cap \Grtnn$, which carries the nonempty tnn cells $\mathcal{S}_{\mathcal{F}} \subseteq \mmptnn$ to the nonempty tnn Grassmann cells $\StnnM \subseteq \Omega\cap \Grtnn$.
\end{enumerate}

We next need the Poisson isomorphism $\Psi: \mc_{p,m}(\bbC) \rightarrow \Bhat^-.\Phat^J= \Omega$ given in \cite[Proposition 3.4]{bgy}. The map $\xi$ can be expressed in terms of $\Psi$ by the formula
$$\xi(X)= \Psi(X\transp D),$$
where $D= \diag(1,-1,1,\dots,(-1)^{m-2},(-1)^{m-1})$, from which it follows that $\xi$ is a Poisson isomorphism. Neither transposition nor $\Psi$ is equivariant with respect to the relevant actions of $\hc$, but both permute $\hc$-orbits of symplectic leaves, because there are automorphisms $\alpha$, $\beta$ of $\hc$ such that $\xi(h.X)= \alpha(h).\xi(X)$ and $\Psi(h.Y)= \beta(h).\Psi(Y)$ for $X\in \Mmpc$, $Y\in \mc_{p,m}(\bbC)$, and $h\in\hc$ (see the proof of \cite[Theorem 3.9]{bgy} for the latter). Thus,
\begin{enumerate}
\item[(III)] $\xi$ sends $\hc$-orbits of symplectic leaves in $\Mmpc$ to $\hc$-orbits of symplectic leaves in $\Omega$.
\end{enumerate}

Combining (I), (II), and (III) yields a second proof of Theorem \ref{tnnHsymp}.
\end{subsec}

%%%%%%%%%%%%%%%%%%%%%%%%%%%%%%%%%%%%%%%%%%%%%%%%%%%%%%%%%%%%%%%%%%%%%%%%%%%%%%%%%%%%%%

\appendix

\section*{Appendix.}

\section{Proof of Theorem \ref{theoPoissonrestoration}(4).}
\label{appendixPoisson}

Set $r= (j,\beta)$. It is clear that statement 5.1(4) holds when $r=(1,2)$. Now let $r\in \Ecirc$, and assume that the statement holds at step $r$. We distinguish between two cases. 

$\bullet$ Assume first that $t_{j,\beta}=0$. 
In this case, $t_{l,\delta}^{(r^+)}=t_{l,\delta}^{(r)}$ 
for all $(l,\delta)$, and so we may rewrite the result at step $r$ as
follows:
$$
\{t_{\ia}^{(r^+)} ,t_{k,\gamma}^{(r^+)} \}
=\begin{cases} t_{\ia}^{(r^+)}t_{k,\gamma}^{(r^+)}  
&  \mbox{ if } i=k \mbox{ and } \alpha < \gamma \\[1ex]
t_{\ia}^{(r^+)} t_{k,\gamma}^{(r^+)} 
&  \mbox{ if } i< k \mbox{ and } \alpha = \gamma \\[1ex]
0 &   \mbox{ if } i < k \mbox{ and } \alpha > \gamma \\[1ex]
2 t_{i,\gamma}^{(r^+)} t_{k,\alpha}^{(r^+)} 
&  \mbox{ if } i< k  \mbox{, } \alpha < \gamma 
\mbox{ and } (k,\gamma ) < r \\[1ex]
0 &  \mbox{ if } i< k  \mbox{, } \alpha < \gamma 
\mbox{ and } (k,\gamma ) \geq r \,. \\ 
\end{cases}
$$
To conclude in this case, it just remains to show that 
$t_{i,\gamma}^{(r^+)} t_{k,\alpha}^{(r^+)}=0$ 
when $(k,\gamma)=r$ with  
$i < k$ and $\alpha < \gamma$;  
that is, we need to prove that 
$t_{i,\beta}^{(r^+)} t_{j,\alpha}^{(r^+)}=0$ 
when $i < j$ and $\alpha < \beta$. 
By Proposition \ref{ObservationDeletingDerivation}, 
we have $ t_{j,\alpha}^{(r^+)}=t_{j,\alpha}$, 
and so we need to prove that 
$t_{i,\beta}^{(r^+)} t_{j,\alpha}=0$ 
when $i < j$ and $\alpha < \beta$. 
It is at this point that we  use our assumption that 
$C$ is a Cauchon diagram. Indeed, as $t_{j,\beta}=0$, 
then $r \in C$. As $C$ is a Cauchon diagram, 
this forces either $(j,\alpha) \in C$ or $(k,\beta) \in C$ 
for all $k \leq j$. Hence, by construction, we get that 
either $t_{j, \alpha}=0$ or $t_{k,\beta}=0$ for all $k \leq j$. 
In the first case, it is clear that 
$t_{i,\beta}^{(r^+)} t_{j,\alpha}=0$. If $t_{k,\beta}=0$ for all $k \leq j$, 
then it follows from Proposition \ref{ObservationDeletingDerivation} 
that $t_{k,\beta}^{(r^+)}=0$ 
for all $k \leq j$. 
Thus, $t_{i,\beta}^{(r^+)} t_{j,\alpha}=0$ 
in this case too.

Thus, 5.1(4) holds at step $r^+$ provided $t_\jb=0$.

$\bullet$ Now assume that $t_{j,\beta} \neq 0$. We first compute $\{\trp_\ia, \tr_\ld\}$ in a number of cases, where $(\ia),(\ld) \in \onemxp$ with $i<j$ and $\alpha<\beta$. Note that
\begin{multline} 
\{\trp_\ia, \tr_\ld\}= \\
\{\tr_\ia, \tr_\ld\}+ \{\tr_\ib, \tr_\ld\} \tinv_\jb \tr_\ja -\tr_\ib t^{-2}_\jb \{t_\jb, \tr_\ld\} \tr_\ja+ \tr_\ib \tinv_\jb \{\tr_\ja, \tr_\ld\}.
\label{E1}
\end{multline}
When expanding a bracket using \eqref{E1}, we will write out four terms in the given order, using $0$ as a placeholder where needed.

For terms in the same row, we claim that
\begin{equation}
\{\trp_\ia, \tr_\ld\}= \begin{cases} (\tr_\ia- \tr_\ib \tinv_\jb \tr_\ja) \tr_\ld &(i=l<j;\ \alpha<\delta<\beta) \\[1ex]
(\tr_\ia+ \tr_\ib \tinv_\jb \tr_\ja) \tr_\ld= \trp_\ia \tr_\ld &(i=l<j;\ \alpha<\beta\le \delta). \end{cases}
\label{E2}
\end{equation}
In the first case, we obtain the result by observing that
$$\{\trp_\ia, \tr_\ld\}= \tr_\ia \tr_\ld+ (-\tr_\ib \tr_\ld) \tinv_\jb \tr_\ja -0+0.$$
The second case splits into two subcases:
$$\{\trp_\ia, \tr_\ld\}= \begin{cases} \tr_\ia \tr_\ld+ 0- \tr_\ib t^{-2}_\jb (-t_\jb \tr_\ld) \tr_\ja+ 0 &(i=l<j;\ \alpha<\beta=\delta) \\[1ex]
\tr_\ia \tr_\ld+ (\tr_\ib \tr_\ld) \tinv_\jb \tr_\ja -0+0 &(i=l<j;\ \alpha<\beta<\delta).
\end{cases}$$
This establishes \eqref{E2}.

For terms in the same column, we claim that
\begin{equation}
\{\trp_\ia, \tr_\ld\}= \begin{cases} (\tr_\ia- \tr_\ib \tinv_\jb \tr_\ja) \tr_\ld &(i<l<j;\ \alpha=\delta<\beta) \\[1ex]
(\tr_\ia+ \tr_\ib \tinv_\jb \tr_\ja) \tr_\ld= \trp_\ia \tr_\ld &(i<j\le l;\ \alpha=\delta<\beta). \end{cases}
\label{E3}
\end{equation}
The first case follows from
$$\{\trp_\ia, \tr_\ld\}= \tr_\ia \tr_\ld +0-0+ \tr_\ib \tinv_\jb (-\tr_\ja \tr_\ld),$$
while the second follows from
$$\{\trp_\ia, \tr_\ld\}= \begin{cases} \tr_\ia \tr_\ld +0- \tr_\ib t^{-2}_\jb (-t_\jb \tr_\ld) \tr_\ja +0 &(i<j=l;\ \alpha=\delta<\beta) \\[1ex]
\tr_\ia \tr_\ld +0-0+ \tr_\ib \tinv_\jb (
\tr_\ja \tr_\ld) &(i<j<l;\ \alpha=\delta<\beta). \end{cases}$$
This establishes \eqref{E3}.

For terms in NE/SW relation, we claim that
\begin{equation}
\{\trp_\ia, \tr_\ld\}= \begin{cases} -2\tr_\ib \tinv_\jb \tr_\la \tr_\jd &(i<l<j;\ \delta<\alpha<\beta) \\[1ex]
0 &(i<j\le l;\ \delta<\alpha<\beta). \end{cases}
\label{E4}
\end{equation}
In the first case, we have
$$\{\trp_\ia, \tr_\ld\}= 0+0-0+ \tr_\ib \tinv_\jb (-2\tr_\la \tr_\jd),$$
while in the second, we have
\begin{multline}
\{\trp_\ia, \tr_\ld\}= \\
\begin{cases} 0+0- \tr_\ib t^{-2}_\jb (-t_\jb \tr_\ld) \tr_\ja+ \tr_\ib \tinv_\jb (-\tr_\ja \tr_\ld) &(i<j=l;\ \delta<\alpha<\beta) \\[1ex]
0+0-0+0 &(i<j<l;\ \delta<\alpha<\beta). \end{cases}
\end{multline}
This establishes \eqref{E4}.

For terms in NW/SE relation, we claim that
\begin{equation}
\{\trp_\ia, \tr_\ld\}= \begin{cases} 2\tr_\idl \tr_\la &(i<l<j;\ \alpha<\delta<\beta) \\[1ex]
2\tr_\idl \tr_\la+ 2\tr_\ib \tinv_\jb \tr_\ja \tr_\ld &(i<l<j;\ \alpha<\delta=\beta) \\[1ex]
2\tr_\idl (\tr_\la+ \tr_\lb \tinv_\jb \tr_\ja) &(i<l<j;\ \alpha<\beta<\delta) \\[1ex]
2\tr_\idl \tr_\la+ 2\tr_\ib \tinv_\jb \tr_\ja \tr_\ld &(i<l=j;\ \alpha<\delta<\beta) \\[1ex]
2\tr_\ib \tr_\ja= 2\tr_\idl \tr_\la &(i<l=j;\ \alpha<\delta=\beta) \\[1ex]
0 &(i<l=j;\ \alpha<\beta<\delta) \\[1ex]
0 &(i<j<l;\ \alpha<\beta;\ \alpha<\delta). \end{cases}
\label{E5}
\end{equation}
The computation of $\{\trp_\ia, \tr_\ld\}$ in the first six cases yields the following:
$$\begin{cases} 2\tr_\idl \tr_\la +0-0+0 &(i<l<j;\ \alpha<\delta<\beta) \\[1ex]
2\tr_\idl \tr_\la + (\tr_\ib \tr_\ld) \tinv_\jb \tr_\ja- \tr_\ib t^{-2}_\jb (-t_\jb \tr_\ld) \tr_\ja +0 &(i<l<j;\ \alpha<\delta=\beta) \\[1ex]
2\tr_\idl \tr_\la+ (2\tr_\idl \tr_\lb) \tinv_\jb \tr_\ja -0+0 &(i<l<j;\ \alpha<\beta<\delta) \\[1ex]
2\tr_\idl \tr_\la+0- \tr_\ib t^{-2}_\jb (-t_\jb \tr_\ld) \tr_\ja+ \tr_\ib \tinv_\jb (\tr_\ja \tr_\ld) &(i<l=j;\ \alpha<\delta<\beta) \\[1ex]
0+(\tr_\ib \tr_\ld) \tinv_\jb \tr_\ja-0+ \tr_\ib \tinv_\jb (\tr_\ja \tr_\ld) &(i<l=j;\ \alpha<\delta=\beta) \\[1ex]
0+0-\tr_\ib t^{-2}_\jb (t_\jb \tr_\ld) \tr_\ja+  \tr_\ib \tinv_\jb (\tr_\ja \tr_\ld) &(i<l=j;\ \alpha<\beta<\delta). \end{cases}$$
As for the final case of \eqref{E5}, when $i<j<l$ and $\alpha<\beta,\delta$ with $\beta\ne \delta$, we have
$$\{\trp_\ia, \tr_\ld\}= 0+0-0+0,$$
while when $i<j<l$ and $\alpha<\beta=\delta$, we have
$$\{\trp_\ia, \tr_\ld\}= 0+ (\tr_\ib \tr_\ld) \tinv_\jb \tr_\ja- \tr_\ib t^{-2}_\jb (t_\jb \tr_\ld) \tr_\ja +0.$$
This completes the proof of \eqref{E5}.

We are now ready to tackle step $r^+$ of 5.1(4) when $t_\jb\ne 0$, that is, to prove that
\begin{equation}
\{t_{\ia}^{(r^+)} ,t_{k,\gamma}^{(r^+)} \}
=\begin{cases} t_{\ia}^{(r^+)}t_{k,\gamma}^{(r^+)}  
&  \mbox{ if } i=k \mbox{ and } \alpha < \gamma \\[1ex]
t_{\ia}^{(r^+)} t_{k,\gamma}^{(r^+)} 
&  \mbox{ if } i< k \mbox{ and } \alpha = \gamma \\[1ex]
0 &   \mbox{ if } i < k \mbox{ and } \alpha > \gamma \\[1ex]
2 t_{i,\gamma}^{(r^+)} t_{k,\alpha}^{(r^+)} 
&  \mbox{ if } i< k  \mbox{, } \alpha < \gamma 
\mbox{ and } (k,\gamma ) < r^+ \\[1ex]
0 &  \mbox{ if } i< k  \mbox{, } \alpha < \gamma 
\mbox{ and } (k,\gamma ) \geq r^+ \,. \\ 
\end{cases}
\label{E6}
\end{equation}

Assume first that either $i\ge j$ or $\alpha\ge \beta$, whence $\trp_\ia= \tr_\ia$. If also $k\ge j$ or $\gamma\ge \beta$, we have $\trp_\kg= \tr_\kg$. Thus, under the current hypotheses,
\begin{multline}
\{\trp_\ia,\trp_\kg\}= \{\tr_\ia,\tr_\kg\}= \\
\begin{cases} \tr_\ia \tr_\kg= \trp_\ia \trp_\kg &(i=k;\ \alpha<\gamma) \\[1ex]
\tr_\ia \tr_\kg= \trp_\ia \trp_\kg &(i<k;\ \alpha=\gamma) \\[1ex]
0 &(i<k;\ \alpha>\gamma) \\[1ex]
2\tr_\ig \tr_\ka= 2\trp_\ig \trp_\ka &(i<k;\ \alpha<\gamma;\ (\kg)<r) \\[1ex]
0 &(i<k;\ \alpha<\gamma;\ (\kg)\ge r).  \end{cases}
\label{E7}
\end{multline}
Since $i\ge j$ or $\alpha\ge \beta$, we cannot have $(\kg)=r$ when $i<k$ and $\alpha<\gamma$. Hence, the fourth case of \eqref{E7} covers the range $(i<k;\ \alpha<\gamma;\ (\kg)<r^+)$. The fifth case includes the range $(i<k;\ \alpha<\gamma;\ (\kg)\ge r^+)$. 

Suppose now that $i<k<j$ and $\alpha\ge\beta>\gamma$. In this case, 
$$\{\trp_\ia,\trp_\kg\}= \{\tr_\ia, \tr_\kg\}+ \{\tr_\ia, \tr_\kb\} \tinv_\jb \tr_\jg- \tr_\kb t^{-2}_\jb \{\tr_\ia, t_\jb\} \tr_\jg+ \tr_\kb \tinv_\jb \{\tr_\ia, \tr_\jg\},$$
and we find that
\begin{multline}
\{\trp_\ia,\trp_\kg\}= \\
\begin{cases} 0+ (\tr_\ia \tr_\kb) \tinv_\jb \tr_\jg- \tr_\kb t^{-2}_\jb (\tr_\ia t_\jb) \tr_\jg+0 =0 &(\alpha=\beta) \\[1ex]
0+0-0+0=0 &(\alpha>\beta). \end{cases}
\label{E8}
\end{multline}
Equations \eqref{E7} and \eqref{E8} verify all cases of \eqref{E6} in which $i\ge j$ or $\alpha\ge \beta$.

For the remainder of the proof, we assume that $i<j$ and $\alpha<\beta$. Thus, equations \eqref{E2}--\eqref{E5} are applicable, and will be all we need when $k\ge j$ or $\gamma\ge\beta$. In cases where $k<j$ and $\gamma<\beta$, we have
\begin{multline}
\{\trp_\ia,\trp_\kg\}= \\
\{\trp_\ia, \tr_\kg\}+ \{\trp_\ia, \tr_\kb\} \tinv_\jb \tr_\jg- 2\tr_\kb t^{-2}_\jb \tr_\ib \tr_\ja \tr_\jg+ \tr_\kb \tinv_\jb \{\trp_\ia, \tr_\jg\},
\label{E9}
\end{multline}
because $\{\trp_\ia, \tr_\jb \}= 2\tr_\ib \tr_\ja$ by the fifth case of \eqref{E5}.

When $i=k$ and $\alpha<\beta\le\gamma$, we see by the second case of \eqref{E2} that
$$\{\trp_\ia,\trp_\kg\}= \{\trp_\ia,\tr_\kg\}= \trp_\ia \tr_\kg= \trp_\ia \trp_\kg.$$
When $i=k$ and $\alpha<\gamma<\beta$, we use \eqref{E9}, \eqref{E2}, \eqref{E5} to see that
\begin{align*}
\{\trp_\ia,\trp_\kg\} &= (\tr_\ia- \tr_\ib \tinv_\jb \tr_\ja) \tr_\kg+ (\trp_\ia \tr_\kb)\tinv_\jb \tr_\jg \\
 &\qquad -2\tr_\kb t^{-2}_\jb \tr_\ib \tr_\ja \tr_\jg+ \tr_\kb \tinv_\jb (2\tr_\ig \tr_\ja+ 2\tr_\ib \tinv_\jb \tr_\ja \tr_\jg) \\
 &=  (\tr_\ia+ \tr_\ib \tinv_\jb \tr_\ja) \tr_\kg+ \trp_\ia(\tr_\kb \tinv_\jb \tr_\jg)= \trp_\ia \trp_\kg.
 \end{align*}
This establishes the first case of \eqref{E6}.

The second case of \eqref{E6} is parallel to the first; we omit the details.

When $i<j\le k$ and $\gamma<\alpha<\beta$, we see by \eqref{E4} that
$$\{\trp_\ia,\trp_\kg\}= \{\trp_\ia,\tr_\kg\}= 0.$$
When $i<k<j$ and $\gamma<\alpha<\beta$, we use \eqref{E9}, \eqref{E4}, \eqref{E5} to see that
\begin{align*}
\{\trp_\ia,\trp_\kg\} &= (-2\tr_\ib \tinv_\jb \tr_\ka \tr_\jg)+ (2\tr_\ib \tr_\ka+ 2
\tr_\ib \tinv_\jb \tr_\ja \tr_\kb)\tinv_\jb \tr_\jg \\
 &\qquad - 2\tr_\kb t^{-2}_\jb \tr_\ib \tr_\ja \tr_\jg+0 \\
 &=0.
 \end{align*}
 This establishes the third case of \eqref{E6}.
 
If $i<k$, $\alpha<\gamma$, and $(\kg)>r$, we see by the last two cases of \eqref{E5} that
$$\{\trp_\ia,\trp_\kg\}= \{\trp_\ia,\tr_\kg\}= 0,$$
which establishes the fifth case of \eqref{E6}. If $i<k$, $\alpha<\gamma$, and $(\kg)=r$, we have, by the fifth case of \eqref{E5},
$$\{\trp_\ia,\trp_\kg\}= \{\trp_\ia,\tr_\kg\}= 2\tr_\ig \tr_\ka= 2\trp_\ig \trp_\ka.$$
It remains to deal with the cases when $i<k$, $\alpha<\gamma$, and $(\kg)<r$.

If $i<k<j$ and $\alpha<\gamma<\beta$, then by \eqref{E9} and \eqref{E5}, we get
\begin{align*}
\{\trp_\ia,\trp_\kg\} &= 2\tr_\ig \tr_\ka+ (2\tr_\ib \tr_\ka+ 2\tr_\ib \tinv_\jb \tr_\ja \tr_\kb) \tinv_\jb \tr_\jg \\
 &\qquad -2\tr_\kb t^{-2}_\jb \tr_\ib \tr_\ja \tr_\jg+ \tr_\kb \tinv_\jb (2\tr_\ig \tr_\ja+ 2\tr_\ib \tinv_\jb \tr_\ja \tr_\jg) \\
  &= 2\trp_\ig \trp_\ka.
  \end{align*}
If $i<k<j$ and $\alpha<\gamma=\beta$, we see by \eqref{E5} that
$$\{\trp_\ia,\trp_\kg\}= \{\trp_\ia,\tr_\kg\}= 2\tr_\ig \tr_\ka+ 2\tr_\ig \tinv_\jb \tr_\ja \tr_\kb= 2\trp_\ig \trp_\ka,$$
while if $i<k<j$ and $\alpha<\beta<\gamma$, we see by \eqref{E5} that
$$\{\trp_\ia,\trp_\kg\}= \{\trp_\ia,\tr_\kg\}= 2\tr_\ig (\tr_\ka+ \tr_\kb \tinv_\jb \tr_\ja) = 2\trp_\ig \trp_\ka.$$
Finally, if $i<k=j$ and $\alpha<\gamma<\beta$, then \eqref{E5} gives us
$$\{\trp_\ia,\trp_\kg\}= \{\trp_\ia,\tr_\kg\}= 2\tr_\ig \tr_\ja+ 2\tr_\ib \tinv_\jb \tr_\ja \tr_\jg= 2\trp_\ig \trp_\ka.$$
This verifies the fourth case of \eqref{E6}, and completes the proof of 5.1(4).
\hfill{$\square$}

%%%%%%%%%%%%%%%%%%%%%%%%%%%%%%%%%%%%%%%%%%%%%%%%%%%%

\section{Number of tnn cells.}
\label{appendix}

The proof of Theorem \ref{TheoDescription} relies on a comparison of the number of nonempty tnn cells in $\mmptnn$ with the number of $m\times p$ Cauchon diagrams. These numbers are equal, as follows from Postnikov's work \cite{postnikov}. Our purpose in this appendix is to show how to obtain the key inequality (Corollary \ref{bound}) via our present methods. Equality then follows easily, as in Theorems \ref{theo:parametrisationnonemotycells} and \ref{TheoDescription}. 

\begin{lem} \label{tnnCauchon}
Every tnn matrix over $\bbR$ is a Cauchon matrix.
\end{lem}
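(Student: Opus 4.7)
The plan is direct: assume $X=(x_{i,\alpha}) \in \mmptnn$ and verify that the set $C := \{(i,\alpha) \mid x_{i,\alpha}=0\}$ satisfies the Cauchon property, i.e., whenever $(i,\alpha) \in C$, either every $(i,\beta)$ with $\beta<\alpha$ lies in $C$, or every $(k,\alpha)$ with $k<i$ lies in $C$. Since all $1\times 1$ minors of $X$ are nonnegative, each entry $x_{i,\alpha}$ is nonnegative, so ``$(i,\beta) \in C$'' simply means $x_{i,\beta}=0$, and similarly for columns.

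I would argue by contradiction. Suppose $(i,\alpha) \in C$, so $x_{i,\alpha}=0$, but neither of the required conditions holds. Then there exist $\beta < \alpha$ with $x_{i,\beta} \neq 0$ and $k<i$ with $x_{k,\alpha} \neq 0$, and by nonnegativity of the entries in fact $x_{i,\beta}>0$ and $x_{k,\alpha}>0$. Consider the $2\times 2$ minor
\[
[k,i \mid \beta,\alpha](X) = \det\begin{pmatrix} x_{k,\beta} & x_{k,\alpha} \\ x_{i,\beta} & x_{i,\alpha} \end{pmatrix} = x_{k,\beta}\,x_{i,\alpha} - x_{k,\alpha}\,x_{i,\beta} = -x_{k,\alpha}\,x_{i,\beta} < 0,
\]
contradicting total nonnegativity. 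Therefore one of the two required Cauchon conditions must hold at $(i,\alpha)$, so $C$ is a Cauchon diagram and $X$ is a Cauchon matrix associated to $C$.

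There is essentially no obstacle here: the argument is a single $2\times 2$ minor computation, and the only subtle point is the bookkeeping observation that nonnegative entries which are nonzero must be strictly positive, so that the product $x_{k,\alpha}\,x_{i,\beta}$ is strictly positive and yields the required strict inequality.
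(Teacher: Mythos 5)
Your proof is correct and is essentially the same argument as the paper's: the paper also isolates the $2\times 2$ minor on rows $k,i$ and columns $\gamma,\alpha$ (with $x_{i,\alpha}=0$, $x_{k,\alpha}>0$) and uses $-x_{k,\alpha}x_{i,\gamma}\ge 0$ together with $x_{i,\gamma}\ge 0$ to conclude $x_{i,\gamma}=0$; you have merely phrased the same computation as a proof by contradiction.
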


\begin{proof} Let $X=(x_{i,\alpha})$ be a tnn matrix. Suppose that some $x_{\ia} =0$, and that $x_{k,\alpha} >0$ for some $k<i$. Let $\gamma<\alpha$. We need to prove that $x_{i,\gamma} =0$.
As $X$ is tnn, we have $-x_{k, \alpha} x_{i, \gamma} = \det \begin{pmatrix} x_{k, \gamma} & x_{k, \alpha} \\ x_{i, \gamma} & x_{i, \alpha} \end{pmatrix} \geq 0$. As $x_{k,\alpha} > 0$, this forces $x_{i,\gamma} \leq 0$. But since $X$ is tnn, we also have $x_{i,\gamma} \geq 0$, so that $x_{i,\gamma} = 0$, as desired. 
 
Therefore $X$ is Cauchon.
\end{proof}

We next give a detailed description of the deleting derivations algorithm, which is inverse to the restoration algorithm. In order to have matching notation for the steps of the two algorithms, we write the initial matrix for this algorithm in the form $\Xbar$.

\begin{conv}[Deleting derivations algorithm]
\label{conv2}$ $\\
Let $\Xbar=(\xbar_{\ia}) \in \MmpK$, where $K$ is a field of characteristic zero. 
As $r$ runs over the set $E$, we define matrices 
$X^{(r)} :=(x_{\ia}^{(r)}) \in \MmpK$ 
as follows:
\begin{enumerate}

\item \underline{When $r=(m,p+1)$}, we set $X^{(r)}=\Xbar$, that is, $x_{\ia}^{(m,p+1)}:=\xbar_{\ia}$ for all $(\ia) \in \gc 1,m \dc \times
\gc 1,p \dc$. 

\item \underline{Assume that $r=(j,\beta) \in \Ecirc$} and
that the matrix $X^{(r^+)}=(x_{\ia}^{(r^+)})$ is already known. The entries
$x_{\ia}^{(r)}$ of the matrix $X^{(r)}$ are defined as follows:

\begin{enumerate}

\item If $x_{j,\beta}^{(r^+)}=0$, then $x_{\ia}^{(r)}=x_{\ia}^{(r^+)}$ 
for all $(\ia) \in \gc 1,m \dc \times \gc 1,p \dc$.

\item If $x_{j,\beta}^{(r^+)}\neq 0$ and 
$(\ia) \in \gc 1,m \dc \times \gc 1,p
\dc$, then \\
$x_{\ia}^{(r)}= \begin{cases}
x_{\ia}^{(r^+)}-x_{i,\beta}^{(r^+)} \left( x_{j,\beta}^{(r^+)}\right)^{-1}
x_{j,\alpha}^{(r^+)} & \qquad \mbox{if }i <j \mbox{ and } \alpha < \beta \\
x_{\ia}^{(r^+)} & \qquad \mbox{otherwise.} \end{cases}$ 

\end{enumerate}

\item Set $X:= X^{(1,2)}$; this is the matrix obtained from $\Xbar$ at the end of the deleting derivations algorithm.

\item The matrices labelled $X^{(r)}$ in this algorithm are the same as the matrices with those labels obtained by applying the restoration algorithm to the matrix $X$ (cf.~Observations \ref{deleting}). Thus, the results of Subsections \ref{effect1} and \ref{effect2} are applicable to the steps of the deleting derivations algorithm.
\end{enumerate}
\end{conv}

In dealing with minors of the matrices $X^{(\jb)}$, we shall need the following variant of Proposition \ref{form0}, which is proved in the same manner. As in Notation \ref{notadet}, we will write minors of $X^{(\jb)}$ in the form $\delta^{(\jb)}$, this time viewing $\Xbar$ as the starting point.

\begin{lem} \label{form0lemmaBIS}
Let 
$\Xbar=(\xbar_{i,\alpha}) \in \MmpK$ be a matrix with entries in a field $K$ of characteristic $0$, and let 
$(j,\beta) \in \Ecirc$. 
Let $\delta=
[i_1,\dots,i_l|\alpha_1,\dots,\alpha_l](\Xbar)$ be a minor of $\Xbar$. Assume that $u:= x_{j,\beta} \neq 0$ and that $i_l < j$ while  $\alpha_h < \beta < \alpha_{h+1}$ for some $h \in \onel$. 
{\rm(}By convention, $\alpha_{l+1}=p+1$.{\rm)} Then
$$\delta^{(j,\beta)^+} = \delta^{(j,\beta)} +
  \sum_{t=1}^h (-1)^{t+h}\delta_{\alpha_t \rightarrow \beta}^{(j,\beta)}\, 
 x_{j,\alpha_t}^{(j,\beta)} u^{-1}.
$$
\end{lem}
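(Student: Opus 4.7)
My approach will be to view $\delta^{(j,\beta)^+}$ as the determinant of a rank-one update of the matrix producing $\delta^{(j,\beta)}$, and then to read off the formula directly from multilinearity. First, by item (4) of Convention \ref{conv2}, the matrices $X^{(r)}$ obtained from the deleting derivations algorithm applied to $\Xbar$ coincide with those obtained by running the restoration algorithm on $X:=X^{(1,2)}$, so the entry-level formulae from Section \ref{effect1} are in force: for $i<j$ and $\alpha<\beta$,
\[
x_{i,\alpha}^{(j,\beta)^+}=x_{i,\alpha}^{(j,\beta)}+x_{i,\beta}^{(j,\beta)}u^{-1}x_{j,\alpha}^{(j,\beta)},
\]
while $x_{i,\alpha}^{(j,\beta)^+}=x_{i,\alpha}^{(j,\beta)}$ in all other cases.

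Now let $M$ denote the $l\times l$ submatrix of $X^{(j,\beta)}$ with rows $i_1,\dots,i_l$ and columns $\alpha_1,\dots,\alpha_l$, and let $v,w\in K^l$ be the column vectors given by $v_k:=x_{i_k,\beta}^{(j,\beta)}$ and
\[
w_t:=\begin{cases} u^{-1}x_{j,\alpha_t}^{(j,\beta)} & \text{if }t\le h,\\ 0 & \text{if }t>h. \end{cases}
\]
Since $i_k<j$ for every $k$, the entry-level formulae above imply that the corresponding submatrix of $X^{(j,\beta)^+}$ is exactly $M+vw\transp$, the zeros in $w$ recording the columns $\alpha_t>\beta$ that are unchanged at this step.

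The key step is then the standard multilinearity identity
\[
\det(M+vw\transp)=\det M+\sum_{t=1}^{l}w_t\,\det(M_{t\leftarrow v}),
\]
where $M_{t\leftarrow v}$ denotes $M$ with its $t$-th column replaced by $v$. Only the terms with $t\le h$ contribute. Each matrix $M_{t\leftarrow v}$ has column set $(\{\alpha_1,\dots,\alpha_l\}\setminus\{\alpha_t\})\cup\{\beta\}$, but with $\beta$ occupying the positional slot $t$ rather than its sorted slot $h$; moving $\beta$ to its sorted position requires $h-t$ adjacent transpositions, so $\det(M_{t\leftarrow v})=(-1)^{h-t}\delta_{\alpha_t\to\beta}^{(j,\beta)}$. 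Substituting and using $(-1)^{h-t}=(-1)^{h+t}$ yields the stated identity. The only delicate step is the sign bookkeeping from this column permutation; everything else is routine. An alternative route would be to mimic the proof of Proposition \ref{form0} by induction on $l+1-h$, using a column-expansion variant of Lemma \ref{form0lemma} as the base case, but the direct rank-one argument above avoids that bookkeeping.
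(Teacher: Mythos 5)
Your proof is correct, and it takes a genuinely different route from the paper. The paper proves this lemma by induction on $l+1-h$: the base case ($h=l$) uses Proposition \ref{Pivot1} to write $\delta^{(j,\beta)}u$ as the $(l+1)\times(l+1)$ minor of $X^{(j,\beta)^+}$ obtained by appending row $j$ and column $\beta$, expands along the last \emph{row} (the column-expansion analogue being Lemma \ref{form0lemma}), and invokes Proposition \ref{Form0Start} to identify $\delta_{\alpha_t\rightarrow\beta}^{(j,\beta)^+}$ with $\delta_{\alpha_t\rightarrow\beta}^{(j,\beta)}$; the inductive step is a Laplace expansion along the last column. You instead observe that, because all $i_k<j$ and $\beta\notin\{\alpha_1,\dots,\alpha_l\}$, the $I\times\Lambda$ submatrix of $X^{(j,\beta)^+}$ is exactly the rank-one update $M+vw\transp$ of the corresponding submatrix $M$ of $X^{(j,\beta)}$, with $w_t=0$ for the columns $\alpha_t>\beta$, and then read off the identity from multilinearity of the determinant in the columns (all terms involving two copies of $v$ vanish), with the sign $(-1)^{h-t}=(-1)^{t+h}$ coming from resorting the column $\beta$ from slot $t$ to slot $h$; this is the same bookkeeping that is implicit in the paper's Laplace expansions, and your sign is right. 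Your argument is shorter and self-contained: it needs only the entry-level step formula of Convention \ref{conv1}/\ref{conv2} and no induction, and it bypasses Propositions \ref{Pivot1} and \ref{Form0Start} altogether. What the paper's route buys is uniformity: it runs exactly parallel to Lemma \ref{form0lemma} and Proposition \ref{form0} and stays within the Sylvester/Laplace machinery already set up for those results, which is why the authors phrase it that way; but as a standalone proof of this lemma your rank-one-update argument is perfectly adequate.
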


\begin{proof} We proceed by induction on $l+1-h$. If $l+1-h=1$, then $h=l$ and $\alpha_l < \beta$.
It follows from Proposition~\ref{Pivot1} that 
\begin{eqnarray*}
\delta^{(j,\beta)}\, u & = & \det \left( 
\begin{array}{ccc}
x_{i_1,\alpha_1}^{(j,\beta)} & \dots & x_{i_1,\alpha_l}^{(j,\beta)} \\[1ex]
\vdots & & \vdots \\[1ex]
x_{i_l,\alpha_1}^{(j,\beta)} & \dots & x_{i_l,\alpha_l}^{(j,\beta)}
\end{array} \right) \times u  \\[2ex]
 & = & \det \left( 
\begin{array}{cccc}
x_{i_1,\alpha_1}^{(j,\beta)^+} & \dots & x_{i_1,\alpha_l}^{(j,\beta)^+} 
& x_{i_1,\beta}^{(j,\beta)^+} \\[1ex]
\vdots & & \vdots &  \vdots \\[1ex]
x_{i_l,\alpha_1}^{(j,\beta)^+} & \dots & x_{i_l,\alpha_l}^{(j,\beta)^+} 
& x_{i_l,\beta}^{(j,\beta)^+} \\[1ex]
x_{j,\alpha_1}^{(j,\beta)^+} & \dots & x_{j,\alpha_l}^{(j,\beta)^+} & u  \\
\end{array} \right).  \\
\end{eqnarray*}
Expanding this determinant along its last row leads to
\begin{eqnarray*}
\delta^{(j,\beta)}\, u & = & u\, \delta^{(j,\beta)^+} + 
\sum_{t=1}^l (-1)^{t+l+1}  x_{j,\alpha_t}^{(j,\beta)^+}
  \delta_{\alpha_t \rightarrow \beta}^{(j,\beta)^+}.
\end{eqnarray*}
By construction, 
$x_{j,\alpha_t}^{(j,\beta)^+}=x_{j,\alpha_t}^{(j,\beta)}$, 
and it follows from Proposition~\ref{Form0Start} that 
$\delta_{\alpha_t \rightarrow \beta}^{(j,\beta)^+}
=\delta_{\alpha_t \rightarrow \beta}^{(j,\beta)}$. 
Hence,   
 \begin{eqnarray*}
 \delta^{(j,\beta)^+} = \delta^{(j,\beta)}
  - \sum_{t=1}^l (-1)^{t+l+1} u^{-1} x_{j,\alpha_t}^{(j,\beta)}
  \delta_{\alpha_t \rightarrow \beta}^{(j,\beta)},
\end{eqnarray*}
as desired.

Now let $l+1-h>1$, and assume the result holds for smaller values of $l+1-h$. Expand the minor $\delta^{(j,\beta)^+} $ along its last column, to get
$$\delta^{(j,\beta)^+} = \sum_{k=1}^l (-1)^{k+l} x_{i_k,\alpha_l}^{(j,\beta)^+} \delta_{\widehat{i_k}, \widehat{\alpha_l}}^{(j,\beta)^+}.$$
The value corresponding to $l+1-h$ for the minors $\delta_{\widehat{i_k}, \widehat{\alpha_l}}^{(j,\beta)^+}$ is $l-h$, and so the induction hypothesis applies. We obtain
$$\delta_{\widehat{i_k}, \widehat{\alpha_l}}^{(j,\beta)^+}= \delta_{\widehat{i_k}, \widehat{\alpha_l}}^{(j,\beta)}+
\sum_{t=1}^h (-1)^{t+h}\delta_{\substack{\widehat{i_k}, \widehat{\alpha_l}\\ \alpha_t \rightarrow \beta}}^{(j,\beta)}\, 
 x_{j,\alpha_t}^{(j,\beta)} u^{-1}$$
for $k\in\onel$. As  
$x_{i_k,\alpha_l}^{(j,\beta)^+}
= x_{i_k,\alpha_l}^{(j,\beta)}$ by construction,  we obtain 
\begin{align*}
\delta^{(j,\beta)^+} &= \sum_{k=1}^l (-1)^{k+l} x_{i_k,\alpha_l}^{(j,\beta)} \delta_{\widehat{i_k}, \widehat{\alpha_l}}^{(j,\beta)} +
\sum_{t=1}^h \sum_{k=1}^l (-1)^{k+l} x_{i_k,\alpha_l}^{(j,\beta)}  (-1)^{t+h}\delta_{\substack{\widehat{i_k}, \widehat{\alpha_l}\\ \alpha_t \rightarrow \beta}}^{(j,\beta)}\, 
 x_{j,\alpha_t}^{(j,\beta)} u^{-1} \\
 &= \delta^{(j,\beta)}
  + \sum_{t=1}^h  (-1)^{t+h}\delta_{\alpha_t \rightarrow \beta}^{(j,\beta)}\, 
 x_{j,\alpha_t}^{(j,\beta)} u^{-1},
\end{align*}
by two final Laplace expansions. This concludes the induction step.
\end{proof}

\begin{theo}
\label{deletingderivationstnn}
Let $\Xbar=(\xbar_{i,\alpha}) \in \mc_{m,p}(\bbR)$ be a tnn matrix. 
We denote by $X^{(j,\beta)}$ the matrix obtained from $\Xbar$ at step $(j,\beta)$ of the deleting derivations 
algorithm. 
\begin{enumerate}
\item All the entries of $X^{(j,\beta)}$ are nonnegative.
\item $X^{(j,\beta)}$ is a Cauchon matrix.
\item The matrix obtained from $X^{(j,\beta)}$ by deleting the rows $j+1, \dots , m$ and the columns $ \beta , \beta +1, \dots, p$ is tnn.
\item The matrix obtained from $X^{(j,\beta)}$ by deleting the rows $ j , j +1, \dots, m$ is tnn.
\item Let $\delta=[i_1,\dots,i_l |\alpha_1, \dots,\alpha_l](\Xbar)$ be a
minor of $\Xbar$ with $(i_l,\alpha_l) < (j,\beta)$. If $ \delta ^{(j,\beta)^+}=0$, then $\delta^{(j,\beta)}=0$.
\end{enumerate}
\end{theo}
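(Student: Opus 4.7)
I would argue by backward induction on $(j,\beta)\in E$, descending from $(m,p+1)$ to $(1,2)$. The base case $(j,\beta)=(m,p+1)$ is immediate: $X^{(m,p+1)}=\Xbar$ is tnn by hypothesis, hence Cauchon by Lemma~\ref{tnnCauchon}; Parts~3--4 concern either $\Xbar$ itself or a trivial subslice of it, and Part~5 does not apply. For the inductive step at $(j,\beta)\in\Ecirc$, write $Y:=X^{(j,\beta)}$, $Z:=X^{(j,\beta)^+}$ and $u:=Z_{j,\beta}=\bar x_{j,\beta}$. The update rule is $Y_{i,\alpha}=Z_{i,\alpha}-Z_{i,\beta}Z_{j,\alpha}/u$ when $i<j$, $\alpha<\beta$ and $u\ne 0$, and $Y_{i,\alpha}=Z_{i,\alpha}$ otherwise. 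The inductive hypothesis at $(j,\beta)^+$ furnishes two tnn submatrices of $Z$---rows $\gc 1,j\dc$ / cols $\gc 1,\beta\dc$ (from Part~3 at $(j,\beta)^+$, or Part~4 at $(j{+}1,1)$ when $\beta=p$), and rows $\gc 1,j-1\dc$ / all cols (from Part~4 at $(j,\beta)^+$)---together with the Cauchon property of $Z$.

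The key identity is the Sylvester/Schur complement formula: for $I\subseteq\gc 1,j-1\dc$ and $\Lambda\subseteq\gc 1,\beta-1\dc$,
\[
[I\mid\Lambda](Y)\cdot u=[I\cup\{j\}\mid\Lambda\cup\{\beta\}](Z),
\]
which places $Y$-minors inside a tnn submatrix of $Z$ and hence makes them nonnegative. Part~1 is the $2\times2$ instance of this. Part~3 is handled either by the identity (when $j\notin I$) or by the row operation ``add $(Z_{i,\beta}/u)\cdot\text{row}\,j$ to row~$i$'' (when $j\in I$), which converts the $Y$-block into the $Z$-block without altering the determinant since row~$j$ lies inside the minor. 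Part~4 is handled similarly by the identity together with the dual column operation in col~$\beta$ (when $\beta\in\Lambda$), plus the observation that $Y=Z$ whenever $\Lambda\subseteq\gc\beta,p\dc$. Part~5 follows from Propositions~\ref{Form0Start} and~\ref{form0}: in every case save one we have $\delta^{(j,\beta)^+}=\delta^{(j,\beta)}$ directly; in the exceptional case the explicit correction term in Proposition~\ref{form0}, combined with the nonnegativities $\delta^{(j,\beta)}\ge 0$ and $\delta^{(j,\beta)^+}\ge 0$ supplied by Parts~3--4 at both steps (and positivity of $u$ and $x_{j,\alpha_h}$, otherwise the correction already vanishes), forces $\delta^{(j,\beta)}=0$ when $\delta^{(j,\beta)^+}=0$.

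The main obstacle is the ``mixed'' sub-case of Part~4: a minor $[I\mid\Lambda](Y)$ with $I\subseteq\gc 1,j-1\dc$ and $\Lambda=\Lambda^-\sqcup\Lambda^+$ containing columns both below and above~$\beta$ but with $\beta\notin\Lambda$. Neither row~$j$ nor column~$\beta$ lies inside the minor, so no direct row or column operation transports the computation into a $Z$-minor inside a tnn submatrix, and the Sylvester identity only augments the minor in a way that introduces signs that must then be reconciled. My plan is to expand via multilinearity in rows (equivalently, Laplace along $\Lambda^+$, where $Y$ and $Z$ agree), rewrite the ``left-hand'' pieces via the Sylvester identity, and combine the resulting signed sum through a multi-index Pl\"ucker-type identity---whose $3\times3$ prototype is the classical relation $bQ=aR+cP$ among the three $2\times 2$ minors of a $2\times 3$ matrix---so that the final expression is a manifestly nonnegative combination of minors of $Z$ lying in its two tnn submatrices. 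Finally, Part~2 is deduced at the end: Part~4 combined with Lemma~\ref{tnnCauchon} makes the block on rows~$\gc 1,j-1\dc$ Cauchon; row~$j$ and column~$\beta$ of $Y$ coincide with those of $Z$, whose Cauchon property is inductive; and the cross-boundary Cauchon conditions are read off from $Y_{i,\alpha}=[\{i,j\}\mid\{\alpha,\beta\}](Z)/u$ together with Cauchon of $Z$ and Part~1.
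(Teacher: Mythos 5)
Your overall scheme --- decreasing induction on $(j,\beta)$, the base case, Part~1 as the $2\times2$ case of Proposition~\ref{Pivot1}, and Part~3 via that identity together with the $i_l=j$ case of Proposition~\ref{Form0Start} --- is the same as the paper's, but the two points you yourself flag as the crux are genuine gaps, not details. In the mixed case of Part~4 ($I\subseteq\gc 1,j-1\dc$, columns on both sides of $\beta$, $\beta\notin\Lambda$), your plan to exhibit $[I|\Lambda](Y)$ as a ``manifestly nonnegative combination of minors of $Z$ lying in its two tnn submatrices'' does not go through: Laplace along $\Lambda^+$ plus Proposition~\ref{Pivot1} turns $u\,[I|\Lambda](Y)$ into a signed sum of products of minors $[K\cup\{j\}\mid\Lambda^-\cup\{\beta\}](Z)\,[I\setminus K\mid\Lambda^+](Z)$, and recombining these by Laplace yields $[I\cup\{j\}\mid\Lambda\cup\{\beta\}](Z)$ plus signed correction terms; the big minor uses row $j$ and columns beyond $\beta$, so it lies in neither block known to be tnn at step $(j,\beta)^+$ (the paper's example $[1,3,4|1,3,4](N^{(4,4)})=-4$ shows such boundary-crossing minors of intermediate matrices can indeed be negative), and the corrections have no evident sign. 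The paper never expresses $\delta^{(j,\beta)}$ positively at all; it multiplies it by the auxiliary minors $[I_k\mid\Lambda_t\cup\{\beta\}]$ through the three-term Pl\"ucker relation \eqref{plucker} (from \cite[(2.10)]{Ando} extended by Muir's law \cite{Mui} --- your $2\times3$ prototype), proves that product nonnegative using a \emph{secondary induction on the size of the minor} together with Proposition~\ref{Form0Start} and the outer induction hypothesis, and then splits into the case where some auxiliary minor is positive and the degenerate case where all of them vanish, in which a Laplace expansion and Lemma~\ref{form0lemmaBIS} give $\delta^{(j,\beta)}=\delta^{(j,\beta)^+}\ge 0$. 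Both the secondary induction and this degenerate-case analysis are missing from your sketch, and I do not see how your ``multi-index Pl\"ucker identity'' could avoid them.

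Part~5 as you state it fails. Proposition~\ref{form0} gives $\delta^{(j,\beta)^+}=\delta^{(j,\beta)}+\delta^{(j,\alpha_h)}_{\alpha_h\rightarrow\beta}\,x_{j,\alpha_h}u^{-1}$, and the correction term is a minor of $X^{(j,\alpha_h)}$, a matrix at a strictly smaller index, i.e.\ at a stage of the deleting derivations algorithm your decreasing induction has not yet reached; nothing controls its sign, so $\delta^{(j,\beta)}\ge0$, $u>0$, $x_{j,\alpha_h}>0$ and $\delta^{(j,\beta)^+}=0$ do not force $\delta^{(j,\beta)}=0$. (The variant with corrections at step $(j,\beta)$, Lemma~\ref{form0lemmaBIS}, carries an alternating-sign sum, so it does not rescue this directly either.) The paper instead proves Part~5 by induction on $l$: when $\alpha_l<\beta$ it applies Ando's result that a tnn matrix with a vanishing principal minor is singular \cite[Corollary 3.8]{Ando} to the $(l+1)\times(l+1)$ submatrix of $X^{(j,\beta)^+}$ on rows $i_1,\dots,i_l,j$ and columns $\alpha_1,\dots,\alpha_l,\beta$, then divides by $u$ via Proposition~\ref{Pivot1}; in the mixed case it reruns the Pl\"ucker-relation argument with zero left-hand side, ending again with Lemma~\ref{form0lemmaBIS} in the degenerate subcase. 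Your Part~2 sketch also needs a bit more care for entries in rows $\ge j$ (a zero column of $X^{(j,\beta)^+}$ above row $i$ must be shown to force zeros in rows $<j$ of $X^{(j,\beta)}$), but that is routine; the substantive gaps are the mixed case of Part~4 and Part~5.
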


\begin{proof}
We prove this theorem by a decreasing induction on $(j,\beta)$. 

If $(j,\beta)=(m,p+1)$, then 1, 3, 4 hold by hypothesis, 2 holds by Lemma \ref{tnnCauchon}, and 5 is vacuous. 

Assume now that $(j,\beta)\in \Ecirc$ and the result is true for $X^{(j,\beta)^+}$. 

$\bullet$ Let us first prove 1. Let $x_{\ia}^{(j,\beta)}$ be an entry of $X^{(j,\beta)}$. We distinguish between two cases. 
First, if $x_{\ia}^{(j,\beta)}=x_{\ia}^{(j,\beta)^+}$, then it follows from the induction hypothesis that $x_{\ia}^{(j,\beta)}$ is nonnegative, as 
desired. Next, if $x_{\ia}^{(j,\beta)}\neq x_{\ia}^{(j,\beta)^+}$, then $i<j$ and $\alpha<\beta$. Moreover, it follows from the construction of the algorithm and the induction hypothesis 
that $x_{j,\beta}^{(j,\beta)^+} > 0$, and $x_{\ia}^{(j,\beta)} = \det \left( \begin{array}{ll} x_{\ia}^{(j,\beta)^+} & x_{i, \beta}^{(j,\beta)^+} \\
x_{j, \alpha}^{(j,\beta)^+} & x_{j, \beta}^{(j,\beta)^+} \end{array} \right) (x_{j,\beta}^{(j,\beta)^+})^{-1}$. Note that if $\beta<p$, then $(j,\beta)^+= (j,\beta+1)$, while if $\beta=p$, then $(j,\beta)^+= (j+1,1)$. Hence, by the induction hypothesis (3 or 4), the previous determinant is nonnegative, so that $x_{\ia}^{(j,\beta)} \geq 0$, as desired.

$\bullet$ Let us now prove 3. Let $\delta=[i_1,\dots,i_l | \alpha_1, \dots,\alpha_l](\Xbar)$ be a minor of $\Xbar$ with 
$i_l \leq j$ and $\alpha_l < \beta$. We need to prove that   $\delta^{(j,\beta)}$ is nonnegative. Set $u:=x_{j,\beta}^{(j,\beta)}=  x_{j,\beta}^{(j,\beta)^+}$.

First, if $i_l=j$ or $u=0$, then it follows from Proposition \ref{Form0Start} that $\delta^{(j,\beta)}=\delta^{(j,\beta)^+}$, and so we deduce from the induction hypothesis (3 or 4) that $\delta^{(j,\beta)}=\delta^{(j,\beta)^+} \geq 0$, as desired.

Now assume that $i_l<j$ and $u\neq 0$. Then it follows from the induction hypothesis (3 or 4) that 
$u > 0$. Moreover, we deduce from Proposition \ref{Pivot1} that 
$\delta^{(j,\beta)} = \det \left( x_{i,\alpha}^{(j,\beta)^+} \right)_{\substack{i=i_1,\dots,i_l,j \\ \alpha=\alpha_1, \dots,\alpha_l,\beta}} u^{-1}$. 
By the induction hypothesis, the above determinant is nonnegative, so that $\delta^{(j,\beta)} \geq 0$, as claimed.\\

$\bullet$ Let us now prove 4. We will prove by induction on $l$ that all the minors of the form $\delta^{(j,\beta)} =\det \left( x_{i,\alpha}^{(j,\beta)} \right)_{\substack{i=i_1,\dots,i_l \\ \alpha=\alpha_1, \dots,\alpha_l}} $ with $i_l < j$ are nonnegative. 

The case $l=1$ is a consequence of 1. So, we assume $l \geq 2$. Set $u:=x_{j,\beta}^{(j,\beta)}=x_{j,\beta}^{(j,\beta)^+}$.
If $\alpha_l < \beta$, then it follows from point 3 that $\delta^{(j,\beta)} \geq 0$. 
Next, if $u=0$, or if $\beta\in \{\alpha_1,\dots,\alpha_l\}$, or if $\beta< \alpha_1$, then it follows from Proposition \ref{Form0Start} that $\delta^{(j,\beta)}=\delta^{(j,\beta)^+}$, and so  it follows from the induction hypothesis (4) that $\delta^{(j,\beta)}=\delta^{(j,\beta)^+} \geq 0$, as desired. 

So, it just remains to consider the case where $u \ne0$ and 
there exists $h \in \gc 1,  l-1\dc $ such that $\alpha_h < \beta < \alpha_{h+1}$. 

In order to simplify the notation, we set $[I | \Lambda]=[I | \Lambda](X^{(j,\beta)})$ and $[I | \Lambda]^+=[I | \Lambda](X^{(j,\beta)^+})$ for the remainder of the proof of 4, for any index sets $I$ and $\Lambda$.

For all $k \in \gc 1, l\dc$, we set $I_k:=\{i_1 , \dots, \widehat{i_k}, \dots , i_l\}$. We also set $I:=\{i_1 , \dots , i_l\}$ 
and $\Lambda_t:=\{\alpha_1 , \dots, \widehat{\alpha_t}, \dots , \alpha_{l-1}\}$ for all $t \in \gc 1 , h \dc$. 

For all $k \in \gc 1 , l\dc$, it follows from \cite[(2.10)]{Ando} 
and Muir's law of extensible minors \cite[p 179]{Mui} that 
\begin{eqnarray}
[I_k | \Lambda_t \cup \{\beta\}] \; [I | \Lambda_t\cup\{\alpha_t , \alpha_l\}] & = & 
[I_k | \Lambda_t\cup\{\alpha_l\}] \; [I | \Lambda_t \cup\{\alpha_t , \beta\}] \nonumber \\ 
& & +[I_k | \Lambda_t\cup\{\alpha_t\}] \; [I | \Lambda_t\cup\{\beta , \alpha_l\}].   \label{plucker}
\end{eqnarray}
Recall that $\delta^{(j,\beta)}= [I | \Lambda_t\cup\{\alpha_t , \alpha_l \}]$ for all $t$.

It follows from the secondary induction hypothesis (on the size of the minors) that $[I_k | \Lambda_t\cup\{\alpha_l\}] \geq 0$ 
and $[I_k | \Lambda_t\cup\{\alpha_t\}] \geq 0$. Moreover it follows from Proposition \ref{Form0Start} that 
$[I | \Lambda_t \cup\{\alpha_t , \beta\}]=[I | \Lambda_t \cup\{\alpha_t , \beta\}]^+$ and 
$[I | \Lambda_t\cup\{\beta , \alpha_l\}]=[I | \Lambda_t\cup\{\beta , \alpha_l\}]^+$, and so we deduce from the primary induction 
hypothesis that these minors are nonnegative. All these inequalities together show that the right-hand side of equation \eqref{plucker} is nonnegative, that is, 
for all $k \in \gc 1, l\dc$ and $t\in \gc 1,h \dc$, we have
\begin{equation}  \label{secondpoint4eqn}
[I_k | \Lambda_t \cup \{\beta\}] \; [I | \Lambda_t\cup\{\alpha_t , \alpha_l \}] \geq 0.
\end{equation}

From the secondary induction hypothesis, we know that $[I_k | \Lambda_t \cup \{\beta\}] \geq 0$ for all $k,t$. 
We need to prove that $\delta^{(j,\beta)}=[I | \Lambda_t\cup\{\alpha_t , \alpha_l \}]$ is nonnegative. 

If there exist $k$ and $t$ such that $[I_k | \Lambda_t \cup \{\beta\}] > 0$, then it follows from the inequality \eqref{secondpoint4eqn} that 
$[I | \Lambda_t\cup\{\alpha_t , \alpha_l \}] \geq 0$, as desired. 

Finally, we assume that for all $k$ and $t$ we have $[I_k | \Lambda_t \cup \{\beta\}] = 0$. In this case, it follows from a Laplace expansion that 
$[I | \Lambda_t \cup \{\alpha_l,\beta\}] = 0$. In other words, we have 
$\delta_{\alpha_t \rightarrow \beta}^{(j,\beta)}=0$ for all $t \in \gc 1, h \dc$. Hence, we deduce from Lemma \ref{form0lemmaBIS} that $\delta^{(j,\beta)}=\delta^{(j,\beta)^+}$. As $\delta^{(j,\beta)^+} \geq 0$ by the induction hypothesis, we get $\delta^{(j,\beta)} \geq 0$, as desired. This completes the induction step for the proof of 4.\\

$\bullet$ Let us now prove 2. Assume that $x_{\ia}^{(j,\beta)}=0$ for some $(\ia)$. We must prove that $x^{(j,\beta)}_{k,\alpha} =0$ for all $k\le i$ or $x^{(j,\beta)}_{i,\lambda} =0$ for all $\lambda\le\alpha$. We distinguish between several cases.

$\bullet\bullet$ Assume that $i < j$. Then by 4, the matrix obtained from $X^{(j,\beta)}$ by deleting the rows $i+1,\dots,m$ is tnn. This matrix is Cauchon by Lemma \ref{tnnCauchon}, and our desired conclusion follows.

$\bullet\bullet$ Assume that $i \geq j$. Hence, by construction, we have $x_{\ia}^{(j,\beta)^+}=x_{\ia}^{(j,\beta)}=0$. So we deduce from the 
induction hypothesis that $x_{k,\alpha}^{(j,\beta)^+}=0$ for all $k \leq i$ or $x_{i, \lambda}^{(j,\beta)^+}=0$ for all $\lambda \leq \alpha$. 

Assume first that $x_{i, \lambda}^{(j,\beta)^+}=0$ for all $\lambda \leq \alpha$. As $i \geq j$, we get $x_{i, \lambda}^{(j,\beta)}=x_{i, \lambda}^{(j,\beta)^+}=0$ for all $\lambda \leq \alpha$, as desired.

Assume next that $x_{k,\alpha}^{(j,\beta)^+}=0$ for all $k \leq i$. Then for all $j \leq k \leq i$, we get $x_{k,\alpha}^{(j,\beta)}=x_{k,\alpha}^{(j,\beta)^+}=0$. So it just remains to prove that $x_{k,\alpha}^{(j,\beta)}=0$ for all $k < j$. 
Let $k < j$. 
We distinguish between two cases. First, if $x_{j,\beta}^{(j,\beta)}=x_{j,\beta}^{(j,\beta)^+}=0$, then $x_{k,\alpha}^{(j,\beta)}=x_{k,\alpha}^{(j,\beta)^+}=0$, as expected. 
Otherwise, $u:=x_{j,\beta}^{(j,\beta)}=x_{j,\beta}^{(j,\beta)^+} \neq 0$, and 
$$x_{k,\alpha}^{(j,\beta)}=x_{k,\alpha}^{(j,\beta)^+} - x_{k,\beta}^{(j,\beta)^+} u^{-1} x_{j,\alpha}^{(j,\beta)^+}.$$
As $x_{k,\alpha}^{(j,\beta)^+}=0$ and $x_{j,\alpha}^{(j,\beta)^+}=0$, we get $x_{k,\alpha}^{(j,\beta)}=0$, as desired.\\

$\bullet$ Finally, let us prove 5 by induction on $l$. For the case $l=1$, assume we have $x^{(j,\beta)^+}_{i,\alpha} =0$ with $(i,\alpha)< (j,\beta)$. If  $x_{\ia}^{(j,\beta)}=x_{\ia}^{(j,\beta)^+}$, then clearly $x_{\ia}^{(j,\beta)}=0$. Otherwise, we have $i<j$ and $\alpha<\beta$, while $u :=x_{j,\beta}^{(j,\beta)^+} \neq 0$ and 
\begin{equation}  \label{proof5eqn}
x_{i,\alpha}^{(j,\beta)}= - x_{i,\beta}^{(j,\beta)^+} u^{-1} x_{j,\alpha}^{(j,\beta)^+}.
\end{equation}
By points 3 and 4, the numbers $x^{(j,\beta)}_{i,\alpha}$, $x_{i,\beta}^{(j,\beta)^+}$, $x_{j,\alpha}^{(j,\beta)^+}$ are all nonnegative, and $u>0$. Hence, we see by equation \eqref{proof5eqn} that $x_{i,\alpha}^{(j,\beta)}=0$, as desired.

Assume $l > 1$.  Let $\delta=[i_1,\dots,i_l | \alpha_1, \dots,\alpha_l](\Xbar)$ be a
minor of $\Xbar$ with $(i_l,\alpha_l) < (j,\beta)$. We assume that $ \delta ^{(j,\beta)^+}=0$. 
If $ \delta ^{(j,\beta)^+}=\delta ^{(j,\beta)}$, then clearly $\delta^{(j,\beta)}=0$. 
Hence, we deduce from Proposition \ref{Form0Start} that we can assume that $u:=x_{j,\beta}^{(j,\beta)}=x_{j,\beta}^{(j,\beta)^+} \neq 0$, 
$i_l < j$ and there exists $h \in \gc 1, l\dc$ such that $\alpha_h < \beta < \alpha_{h+1}$. 

We distinguish between two cases to prove that $\delta^{(j,\beta)}=0$. 
First we assume that $\alpha_l < \beta$. Let $A$ be the matrix obtained from $X^{(j,\beta)^+}$ by retaining the rows $i_1, \dots, i_l, j$ and the columns 
$\alpha_1 , \dots , \alpha_l, \beta$. It follows from 3 or 4 that $A$ is tnn. Moreover, $A$ has a principal minor, $ \delta ^{(j,\beta)^+}$, which is equal to zero. Hence, we deduce from \cite[Corollary 3.8]{Ando} that $\det (A)=0$, that is, 
$$   \det \left( x_{i,\alpha}^{(j,\beta)^+}
\right)_{\substack{i=i_1,\dots,i_l,j \\ \alpha=\alpha_1, \dots,\alpha_l,\beta}}=0.$$ 
Then, as we have assumed that $u \neq 0$, it follows from Corollary \ref{cor:part1} that 
$$[i_1,\dots,i_l | \alpha_1, \dots,\alpha_l](X^{(j,\beta)})=0,$$
that is,  $\delta^{(j,\beta)}=0$ as desired.

Next, we assume that there exists $h \in \gc 1, l-1\dc$ such that $\alpha_h < \beta < \alpha_{h+1}$. 
With notations similar to those of equation (\ref{plucker}), we have
\begin{eqnarray*}
0=[I_k | \Lambda_t \cup \{\beta\}]^+ \; [I | \Lambda_t\cup\{\alpha_t , \alpha_l \}]^+ & = & 
[I_k | \Lambda_t\cup\{\alpha_l\}]^+ \; [I | \Lambda_t \cup\{\alpha_t , \beta\}]^+ \\ 
& & +[I_k | \Lambda_t\cup\{\alpha_t\}]^+ \; [I | \Lambda\cup\{\beta , \alpha_l\}]^+
\end{eqnarray*}
for all $k \in \gc 1,  l\dc$ and $t\in \gc 1,h \dc$. As all these minors are nonnegative by 4, we get that
$$[I_k | \Lambda_t\cup\{\alpha_l\}]^+ \; [I | \Lambda_t \cup\{\alpha_t , \beta\}]^+= [I_k | \Lambda_t\cup\{\alpha_t\}]^+ \; [I | \Lambda\cup\{\beta , \alpha_l\}]^+ =0$$
for all $k$, $t$. Now we deduce from the secondary induction (on the size of the  minors) and Proposition \ref{Form0Start} that
$$[I_k | \Lambda_t\cup\{\alpha_l\}] \; [I | \Lambda_t \cup\{\alpha_t , \beta\}]= [I_k | \Lambda_t\cup\{\alpha_t\}] \; [I | \Lambda\cup\{\beta , \alpha_l\}] =0$$
for all $k$, $t$. Hence, by equation \eqref{plucker},
$$[I_k | \Lambda_t\cup\{\beta\}]\; \delta^{(j,\beta)}= [I_k | \Lambda_t\cup\{\beta\}]\; [I | \Lambda_t\cup\{\alpha_t , \alpha_l \}]=0$$
for all $k$, $t$.

If $[I_k | \Lambda_t\cup\{\beta\}] \ne 0$ for some $k$, $t$, then $\delta^{(j,\beta)}= 0$. If $[I_k | \Lambda_t\cup\{\beta\}] =0$ for all $k$, $t$, then it follows from a Laplace expansion that $[I | \Lambda_t \cup\{\alpha_l , \beta\}] =0$ for all $t$. In this case (as in the proof of 4), it follows from Lemma  \ref{form0lemmaBIS} that $\delta^{(j,\beta)}= \delta^{(j,\beta)^+}= 0$. This completes the induction step for 5.
\end{proof}

At the end of the deleting derivations algorithm, we get the following result, which provides a converse to Theorem \ref{Nbarnonneg}.

\begin{cor}
Let $\Xbar=(\xbar_{i,\alpha}) \in \mc_{m,p}(\bbR)$ be a tnn matrix. 
We set $X:=X^{(1,2)}$, the matrix obtained at step $(1,2)$ of the deleting derivations 
algorithm. Then $X$ is a nonnegative $\hc$-invariant Cauchon matrix.
\end{cor}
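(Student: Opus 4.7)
The plan is that this corollary is essentially a packaging of Theorem \ref{deletingderivationstnn} specialized to the terminal step $(j,\beta)=(1,2)$ of the deleting derivations algorithm, combined with the observation (point 4 of Convention \ref{conv2}) that the matrices $X^{(j,\beta)}$ produced by the deleting derivations algorithm starting from $\Xbar$ agree with those produced by applying the restoration algorithm to $X=X^{(1,2)}$. So almost no new work is needed here; the substance sits in the already-established theorem.

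First I would verify nonnegativity: part 1 of Theorem \ref{deletingderivationstnn} at $(j,\beta)=(1,2)$ states that every entry of $X^{(1,2)} = X$ is nonnegative, giving condition 1 immediately. Second, for the Cauchon property, I would quote part 2 of the same theorem at $(j,\beta)=(1,2)$, which directly says $X^{(1,2)} = X$ is a Cauchon matrix.

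The last step, $\hc$-invariance, is where one has to be slightly careful. By definition, $\hc$-invariance of $X$ is a statement about minors $\delta^{(j,\beta)}$ and $\delta^{(j,\beta)^+}$ computed by running the \emph{restoration} algorithm starting from $X$, as $(j,\beta)$ ranges over $\Ecirc$. However, because the restoration and deleting derivations algorithms are mutually inverse (point 4 of Convention \ref{conv2}), the intermediate matrices appearing in the two algorithms match step by step: the matrix $X^{(j,\beta)}$ obtained from $\Xbar$ by deleting derivations coincides with the matrix at step $(j,\beta)$ obtained from $X$ by restoration. Consequently the minors $\delta^{(j,\beta)}$ in the two settings are literally the same. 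Part 5 of Theorem \ref{deletingderivationstnn}, applied at each $(j,\beta) \in \Ecirc$, then gives exactly the implication $\delta^{(j,\beta)^+}=0 \Rightarrow \delta^{(j,\beta)}=0$ required by the definition of $\hc$-invariance.

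There is no real obstacle here; the only point requiring care is the bookkeeping that identifies the two families of intermediate matrices so that part 5 of Theorem \ref{deletingderivationstnn} can be quoted directly. The difficult content---propagating nonnegativity of all minors through the algorithm and proving the implication 5 via Plücker/Muir-type relations and inductive Laplace expansions---has already been absorbed into the proof of that theorem, so the corollary itself reduces to a one-line invocation for each of its three conclusions.
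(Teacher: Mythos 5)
Your proposal is correct and matches the paper's (implicit) argument: the corollary is stated without a separate proof precisely because it is the specialization of parts 1, 2 and 5 of Theorem \ref{deletingderivationstnn} to all steps, using Convention \ref{conv2}(4) to identify the intermediate matrices of the deleting derivations algorithm with those of the restoration algorithm applied to $X$. Your care in spelling out that last identification for the $\hc$-invariance is exactly the right bookkeeping, and nothing further is needed.
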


\begin{cor}
\label{bound}
The number of nonempty tnn cells in $\mc_{m,p}^{\ge0}(\bbR)$ is less than or equal to the number of $m\times p$ Cauchon diagrams.
\end{cor}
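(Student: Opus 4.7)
The plan is to use the deleting derivations algorithm as a bridge between tnn matrices and Cauchon diagrams. More precisely, I will define a map $F:\mmptnn\to\mathcal{C}_{m,p}$ and show that each of its fibers is contained in a single tnn cell; since the tnn cells partition $\mmptnn$, this will force the number of nonempty tnn cells to be at most $|\mathcal{C}_{m,p}|$.

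To construct the map, start with any $\Xbar\in\mmptnn$ and apply the deleting derivations algorithm of Convention \ref{conv2} to produce $X:=X^{(1,2)}$. By the corollary immediately preceding this statement (which combines parts 1, 2, and 5 of Theorem \ref{deletingderivationstnn}), the matrix $X$ is a nonnegative $\hc$-invariant Cauchon matrix; let $C(\Xbar)\in\mathcal{C}_{m,p}$ denote its associated Cauchon diagram, i.e., the set of positions of its zero entries, and set $F(\Xbar):=C(\Xbar)$.

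Now I will argue that if $F(\Xbar)=F(\Ybar)$ for two tnn matrices $\Xbar,\Ybar$, then $\Xbar$ and $\Ybar$ lie in the same tnn cell. Letting $X,Y$ be the matrices produced by the deleting derivations algorithm from $\Xbar,\Ybar$ respectively, both $X$ and $Y$ are $\hc$-invariant Cauchon matrices associated to the same Cauchon diagram $C:=F(\Xbar)=F(\Ybar)$. By Corollary \ref{cor:independence}, the minors that vanish on $\overline{X}$ are exactly the minors that vanish on $\overline{Y}$. The key observation is then that the restoration algorithm is inverse to the deleting derivations algorithm (this is built into Convention \ref{conv2}(4), since the formulae of Observations \ref{deleting} coincide with those of the deleting derivations algorithm), so $\overline{X}=\Xbar$ and $\overline{Y}=\Ybar$. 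Consequently, $\Xbar$ and $\Ybar$ have exactly the same set of vanishing minors, and so they belong to the same tnn cell by the defining equation \eqref{deftnncell}.

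Thus $F$ factors through the partition of $\mmptnn$ into nonempty tnn cells, and the induced map on cells is injective into $\mathcal{C}_{m,p}$, yielding the desired inequality. The most delicate step is ensuring that $X$ really qualifies for the hypotheses of Corollary \ref{cor:independence}, namely that $X$ is an $\hc$-invariant Cauchon matrix whose Cauchon diagram is exactly $C(\Xbar)$; but this is precisely what Theorem \ref{deletingderivationstnn} (parts 2 and 5) provides, so no further work is required beyond cleanly assembling the implications.
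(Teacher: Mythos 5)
Your argument is correct and follows essentially the same route as the paper's own proof: apply the deleting derivations algorithm to get a nonnegative $\hc$-invariant Cauchon matrix (the preceding corollary), use its zero pattern to define a map to $\mathcal{C}_{m,p}$, and invoke Corollary \ref{cor:independence} together with the fact that restoration inverts deletion to see that matrices in the same fibre have identical vanishing minors, hence lie in the same cell. The only difference is that you spell out the step $\overline{X}=\Xbar$ explicitly, which the paper leaves implicit when citing Corollary \ref{cor:independence}.
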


\begin{proof}
If $\Xbar \in \mmptnn$, then it follows from the previous corollary that 
the matrix $X= (x_{i,\alpha})$ obtained at step $(1,2)$ of the deleting derivations algorithm is a nonnegative $\hc$-invariant Cauchon matrix. Let $C:=\{(i,\alpha) \mid x_{i,\alpha}=0\}$. As $X$ is Cauchon, $C$ is a Cauchon diagram. So we have a mapping $\pi: \Xbar \mapsto C$ from $\mmptnn$ to the set $\mathcal{C}_{m,p}$ of $m\times p$ Cauchon diagrams.  Now, let $\Xbar$ and $\Ybar$ be two $m\times p$ tnn matrices. If $\pi(\Xbar)=\pi(\Ybar)$, then it follows from Corollary \ref{cor:independence} that $\Xbar$ and $\Ybar$ belong to the same tnn cell. 
So, each nonempty tnn cell in $\mmptnn$ is a union of fibres of $\pi$. The corollary follows.
\end{proof}

%%%%%%%%%%%%%%%%%%%%%%%%%%%%%%%%%%%%%%%%%%%%%%%%%%%%%%%%%%%%%%%%%%%%%%%%%%%%%%%%%%%%
\section*{Acknowledgements}

The results in this paper were announced during the mini-workshop
``Non-neg\-a\-tiv\-i\-ty is a quantum phenomenon'' that took place at the
Mathematisches For\-schungs\-in\-sti\-tut Oberwolfach, 1--7 March 2009,
\cite{MFO}; we thank the director and staff of the MFO for providing
the ideal environment for this stimulating meeting. We also
thank Konni Rietsch, Laurent Rigal, Lauren Williams and Milen Yakimov for discussions
and comments concerning this paper both at the workshop and at other
times.

%%%%%%%%%%%%%%%%%%%%%%%%%%%%%%%%%%%%%%%%%%%%%%%%%%%%%%%%%%%%%%%%%%%%%%%%%%%%%%%%%%%%

\bibliographystyle{amsplain}
\bibliography{biblio}
\vskip 1cm

\vskip 1cm

K.R. Goodearl:

Department of Mathematics,

University of California,

Santa Barbara, CA 93106, USA

Email: {\tt goodearl@math.ucsb.edu} 

\medskip

S. Launois: 

Institute of Mathematics, Statistics and Actuarial Science,

University of Kent

Canterbury, Kent, CT2 7NF, UK

Email: {\tt S.Launois@kent.ac.uk}

\medskip

T.H. Lenagan: 

Maxwell Institute for Mathematical Sciences,

School of Mathematics, University of Edinburgh,

James Clerk Maxwell Building, King's Buildings, Mayfield Road,

Edinburgh EH9 3JZ, Scotland, UK

E-mail: {\tt tom@maths.ed.ac.uk}
\end{document}